\numberwithin{equation}{section} 
\newtheorem{theorem}{Theorem}[section]
\newtheorem{corollary}{Corollary}[theorem]
\newtheorem{lemma}[theorem]{Lemma}
\title{The scalar curvature flow with a flat side}
\author{Hyo Seok Jang, Ki-Ahm Lee}
\begin{document}
\maketitle
\begin{abstract}
We study the near-the-interface behavior of a compact convex scalar curvature flow with a flat side. Under suitable initial conditions on the flat side, we show that the interface propagates with a finite and non-degenerate speed until the flat side vanishes. Then we get optimal derivative estimates of the pressure-like function, optimal decay estimates of curvatures near the interface, and an Aronson-Bénilan-type curvature lower bound, from which we obtain the Hölder regularity of the ratio of the curvature to the optimal decay rate up to the free boundary. In the end, we obtain the short-time and all-time existence of the solution, smooth up to the interface. 
\end{abstract}

\section{Introduction}

\subsection{The problem of the scalar curvature flow}
This paper concerns the regularity of the free boundary problem associated with the scalar curvature flow with a flat side. We begin with the evolution equations governing the scalar curvature flow. Let a compact hypersurface \( \Sigma \) in the \( (n+1) \)-dimensional Euclidean space be given. We assume \( n \geq 3 \). Suppose that the body evolves in time by an embedding \( X:\Sigma \times [0, T) \rightarrow \mathbb{R}^{n+1} \). We denote the image of \(X \) at time \(t \) by \( M_t \). Suppose that the evolution occurs in the inward normal direction \( N=-\nu \) at each point \( x \) of the hypersurface and that the speed is given by the scalar curvature \( \sigma_2 \). Then we have the following evolution equation: 
\begin{equation} 
\frac{\partial X}{\partial t}(x,t)=-\sigma_2 (x,t) \nu (x,t) \text{  with  } X(x,0)=X_0(x) \ .
\end{equation}
A \emph{scalar curvature flow} is a solution to the equation above.  
\subsection{The history of the research on flows with a flat side} 

W. Firey \cite{firey74} first considered the evolution of the Gauss curvature flow of compact surfaces. Hamilton \cite{hamilton94} showed that if a Gauss curvature flow initially contains a flat side, then there will be a smaller flat side a little later and it takes some time for the surface to become strictly convex. 

P. Daskalopoulos with R. Hamilton \cite{daskalopoulos-hamilton99} studied the solvability and regularity of the interface \( \Gamma \) between the Gauss curvature flow and its flat side, by viewing the flow as a free boundary problem. They showed that the solution exists and is smooth up the interface, for a short time.

P.Daskalopoulos with K.-A. Lee \cite{daskalopoulos-lee03} showed the existence of regular solutions to a certain degenerate parabolic equation of the non-divergence form. Using these results of \cite{daskalopoulos-lee03} for the model equation under certain coordinates, they \cite{daskalopoulos-lee04} showed that the solution exists smoothly and the interface is smooth for all time until the flat side vanishes. 

On the other hand, P. Daskalopoulos with R. Hamilton \cite{daskalopoulos-hamilton98} studied the n-dimensional porous medium equation with a flat side. They showed the \( C^\infty \) regularity of the free boundary for a short time, using the regularity of a model degenerate equation which is obtained by changing coordinates.

P. Daskalopoulos , R. Hamilton, and K.-A. Lee showed \cite{daskalopoulos-hamilton-lee01} that the square root of the pressure is kept concave by the porous medium equation and that the solution exists for a long time and up to the free boundary while the free boundary is smooth.

K.-A. Lee and E. Lee \cite{lee-rhee12a} \cite{lee-rhee12b} considered the evolution of a rotation-invariant surface with a concave side and showed that the long-time existence of a solution, smooth up to the free boundary and the free boundary is smooth.

\subsection{The equation of the flow in the local coordinates}
We need to find the optimal regularity of the hypersurface near the free boundary where the curvatures become degenerate.

Let us assume that the embedded hypersurface of the scalar curvature flow is given as the graph of a smooth function \( y=f(x(t),t) \) for \(x \in \mathbb{R}^{n}\). Then the scalar curvature flow is given by 
\begin{equation}
\frac{\partial f}{\partial t}=\sigma_2 \sqrt{1+|\nabla_x f|^2} \ .
\end{equation}

Now, we evaluate the scalar curvature from the metric \( g_{ij} \), the second fundamental form \( h_{ij}\), and the Weingarten map \(h^i_j\), as in Ecker \cite{ecker04}.
Since
\begin{equation} 
\frac{\partial X}{\partial x^i}=\bigg(\overrightarrow e_i \ , \frac{\partial f}{\partial x^i}\bigg)\ , \ \ 1\leq i \leq n, \text{ and } \frac{\partial^2 X}{\partial x^i x^j}=\bigg(0, \frac{\partial^2 f}{\partial x^i \partial x^j}\bigg),
\end{equation}

\begin{equation} \begin{split}
g_{ij}&=\bigg<\frac{\partial X}{\partial x^i}, \frac{\partial X}{\partial x^i}\bigg>=\delta_{ij}+\frac{\partial f}{\partial x^i}\frac{\partial f}{\partial x^j}\ , \\ g^{ij}&= \bigg( \delta_{ij}-\frac{\partial_i f \partial_j f}{1+|\nabla_x f|^2}\bigg), \ \ 1\leq i, j \leq n, 
\end{split} \end{equation}

\begin{equation} \begin{split}
-\overrightarrow{\nu} &=\overrightarrow{n}=\frac{(- \nabla_x f, 1)}{\sqrt{1+|\nabla_x f|^2}} \\ 
h_{ij}&=\bigg<-\overrightarrow{\nu},\frac{\partial^2 X}{\partial x^i \partial x^j}\bigg>=\frac{1}{\sqrt{1+|\nabla_x f|^2}} \frac{\partial^2 f}{\partial x^i \partial x^j}\ , \ \ 1\leq i, j \leq n \ , \\ 
h^i_j&=g^{ik} h_{kj}=\bigg( \delta_{ik}-\frac{\partial_i f \partial_k f}{1+|\nabla_x f|^2}\bigg) \frac{1}{\sqrt{1+|\nabla_x f|^2}} \frac{\partial^2 f}{\partial x^k \partial x^j}\ , \ \ 1\leq i, j \leq n \ .
\end{split} \end{equation}

Hence, the mean curvature \(H\) of the flow is
\begin{equation} \begin{split}
H=\sum_{i=1}^nh^i_i=\sum_{i,j=1}^n g^{ij}h_{ij}=\sum_{i,j=1}^n \bigg( \delta_{ij}-\frac{\partial_i f \partial_j f}{1+|\nabla_x f|^2}\bigg) \frac{1}{\sqrt{1+|\nabla_x f|^2}} \frac{\partial^2 f}{\partial x^i \partial x^j} \ ,
\end{split} \end{equation}

and the square sum \(|A|^2=\sum_i \sigma^2_i\) of the principal curvatures is
\begin{equation} \begin{split}
&|A|^2=\sum_{i,j=1}^n h^i_j h^j_i=\sum_{i,j=1}^n g^{ik} h_{kj} g^{jl} h_{li} \\
&=\bigg( \delta_{ik}-\frac{\partial_i f \partial_k f}{1+|\nabla_x f|^2}\bigg) \frac{1}{\sqrt{1+|\nabla_x f|^2}} \frac{\partial^2 f}{\partial x^k \partial x^j} \bigg( \delta_{jl}-\frac{\partial_j f \partial_l f}{1+|\nabla_x f|^2}\bigg) \frac{1}{\sqrt{1+|\nabla_x f|^2}} \frac{\partial^2 f}{\partial x^l \partial x^i} \ ,
\end{split} \end{equation} 

so that the scalar curvature is 
\begin{equation} \label{eq:scalar}
\begin{split}
\sigma_2 & =\frac{1}{2}(H^2-|A|^2)=\frac{1}{2}\sum_{i,j,k,l=1}^n g^{ik}g^{jl}(h_{ki} h_{lj}-h_{kj} h_{li}) \\
& =\frac{1}{2(1+|\nabla_x f|^2)}\sum_{i,j,k,l=1}^n  \bigg(\delta_{ik}-\frac{\partial_i f \partial_k f}{1+|\nabla_x f|^2}\bigg)\bigg(\delta_{jl}-\frac{\partial_j f \partial_l f}{1+|\nabla_x f|^2}\bigg)\bigg(\frac{\partial^2 f}{\partial x^k \partial x^i} \frac{\partial^2 f}{\partial x^l \partial x^j} -\frac{\partial^2 f}{\partial x^k \partial x^j} \frac{\partial^2 f}{\partial x^l \partial x^i}\bigg) \\
& =\frac{1}{(1+|\nabla_x f|^2)} \sum_{i<j} \sum_{k \neq l}  \bigg(\delta_{ik}-\frac{\partial_i f \partial_k f}{1+|\nabla_x f|^2}\bigg)\bigg(\delta_{jl}-\frac{\partial_j f \partial_l f}{1+|\nabla_x f|^2}\bigg)\bigg(\frac{\partial^2 f}{\partial x^k \partial x^i} \frac{\partial^2 f}{\partial x^l \partial x^j} -\frac{\partial^2 f}{\partial x^k \partial x^j} \frac{\partial^2 f}{\partial x^l \partial x^i}\bigg) \\ 
\end{split} 
\end{equation}
which is \( \sigma_2 (h^i_j) =\sigma_2 \bigg( \bigg( \delta_{ik}-\frac{\partial_i f \partial_k f}{1+|\nabla_x f|^2}\bigg) \frac{1}{\sqrt{1+|\nabla_x f|^2}} \frac{\partial^2 f}{\partial x^k \partial x^j} \bigg) \).

From the formula of the scalar curvature (\ref{eq:scalar}), we get the following lemma.

\begin{lemma}
The scalar curvature flow is given by the graph of the function \(f\), solving
\begin{equation} \label{f_t}
\begin{split} 
f_t= \frac{1}{2\sqrt{1+|\nabla_x f|^2}}\sum_{i,j,k,l=1}^n \bigg(\delta_{ik}-\frac{f_i f_k}{1+|\nabla_x f|^2}\bigg)\bigg(\delta_{jl}-\frac{f_j f_l}{1+|\nabla_x f|^2}\bigg)(f_{ki} f_{lj}-f_{kj} f_{li})\ .
\end{split} 
\end{equation}
\end{lemma}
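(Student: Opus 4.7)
The plan is to combine two ingredients already established in the excerpt: the graph evolution equation (1.2), $f_t = \sigma_2 \sqrt{1+|\nabla_x f|^2}$, and the explicit formula (\ref{eq:scalar}) expressing $\sigma_2$ in terms of the first and second spatial derivatives of $f$. The lemma then follows by direct substitution, with no new analytic ingredients beyond what has already been spelled out.

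First I would briefly justify (1.2) as the graph form of (1.1): viewing the body as the zero set of $F(x,y,t) = y - f(x,t)$, one has $\nabla F = (-\nabla_x f, 1)$ with $|\nabla F| = \sqrt{1+|\nabla_x f|^2}$, so $\nabla F/|\nabla F|$ agrees with the inward unit normal $-\nu$ computed in the excerpt. Equating the inward normal speed $-F_t/|\nabla F| = f_t/\sqrt{1+|\nabla_x f|^2}$ with the prescribed speed $\sigma_2$ yields (1.2). Next I would insert the penultimate line of (\ref{eq:scalar}) into this relation; the single algebraic simplification
\[
\sqrt{1+|\nabla_x f|^2}\cdot \frac{1}{2(1+|\nabla_x f|^2)} = \frac{1}{2\sqrt{1+|\nabla_x f|^2}}
\]
produces exactly the prefactor appearing in (\ref{f_t}).

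The only bookkeeping that remains is reconciling the equivalent index conventions: the last line of (\ref{eq:scalar}) restricts the sums to $i<j$ and $k\neq l$, whereas (\ref{f_t}) uses the unrestricted $\tfrac{1}{2}\sum_{i,j,k,l=1}^n$. These coincide because the bracket $f_{ki}f_{lj}-f_{kj}f_{li}$ is antisymmetric in $(i,j)$ and vanishes when $k=l$, while the two $\delta$-factors are symmetric under $i\leftrightarrow j$; this accounts for the factor $\tfrac{1}{2}$ and removes the diagonal terms. I do not anticipate any genuine obstacle: the lemma is merely the graph form of (1.1) written out in terms of $f$ after substitution of the intrinsic formula for $\sigma_2$, and the proof amounts to this one-line calculation plus the index reconciliation.
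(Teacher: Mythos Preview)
Your proposal is correct and is exactly the approach the paper takes: the lemma is stated immediately after the derivation of formula~(\ref{eq:scalar}) with the comment ``From the formula of the scalar curvature~(\ref{eq:scalar}), we get the following lemma,'' so the intended proof is nothing more than substituting the penultimate line of~(\ref{eq:scalar}) into $f_t=\sigma_2\sqrt{1+|\nabla_x f|^2}$, precisely as you outline. Your index-reconciliation paragraph is superfluous, since~(\ref{f_t}) already matches the penultimate line of~(\ref{eq:scalar}) verbatim (the $\sum_{i<j}\sum_{k\neq l}$ form is a further rewriting that you need not pass through); but the extra discussion does no harm.
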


We note that the two-dimensional scalar curvature flow is the Gauss curvature flow:
\begin{equation} \begin{split}
f_t= \frac{\text{det} D^2 f }{(1+|\nabla_x f|^2)^{3/2}} \ .
\end{split} \end{equation}

Let us define \(g=\sqrt{2f}\) and call it the pressure-like function. Let \(I=1+g^2|\nabla_x g|^2\). Then we can express its evolution equation as the following:
\begin{equation}\begin{split} \label{eq:g_t}
g_t&=\frac{1}{2\sqrt{I}} \sum_{i,j=1}^n \big(g(g_{ii}g_{jj}-g_{ij}^2)+g_j^2 g_{ii}+g_i^2 g_{jj}-2g_i g_j g_{ij}\big) \\
&\ -\frac{1}{2I^{3/2}}g^2 \sum_{i,j,k=1}^n g_i g_k\big(g(g_{ik}g_{jj}-g_{jk}g_{ij})+g_j^2 g_{ik}+g_i g_k g_{jj}-g_i g_j g_{jk} -g_j g_k g_{ij}\big) \\
&\ -\frac{1}{2I^{3/2}}g^2 \sum_{i,j,kl=1}^n g_j g_l\big(g(g_{jl}g_{ii}-g_{il}g_{ij})+g_i^2 g_{jl}+g_j g_l g_{ii} -g_i g_j g_{il} -g_i g_l g_{ij}\big) . \\
\end{split} \end{equation} 

because the symmetry of the equation \ref{f_t} implies that
\begin{equation}
\sum_{i,j,k,l} g_i g_j g_k g_l\big((g_{ki}g_{lj}-g_{kj}g_{li})g+g_{ki}g_l g_j+g_k g_i g_{lj}-g_{kj}g_l g_i-g_k g_j g_{li}\big)=0 . \\
\end{equation}

Let \( \Gamma_\epsilon (t) \) be the level set \( \{(x,g(x,t)) | g=\epsilon \}\) and \(e_1=\nu\) be its outer normal vector. From the fact that \(g\) stays constant on the level set, we see that the equation \ref{eq:g_t} on \(\Gamma_\epsilon\) is

\begin{equation}
g_t=\frac{1}{2\sqrt{1+g^2 g_1^2}} \sum_{i,j=2}^n g(g_{ii} g_{jj}-g_{ij}^2)+\frac{1}{(1+g^2 g_1^2)^{3/2}} \sum_{i=2}^n \big(g(g_{ii}g_{11}-g_{i 1}^2)+g_1^2 g_{ii}\big).
\end{equation}

In particular, on the interface \(\Gamma(t)\) of the flat side where \(g=0\), we have
\begin{equation} 
g_t=g_\nu^2  \Delta_{\tau} g =g_\nu^3 H \ .
\end{equation} where \(H\) is the mean curvature of the interface \(\Gamma(t)\).

Also, we introduce a quantity
\begin{equation}
R_{g,2}=\sum_{i,j=1}^n (g_{ii} g_{jj}- g_{ij}^2)
\end{equation}
where \(i\) and \(j\) run through all indices \(1, \ldots, n\). The convexity of \(f\) implies that \(R_{f,2} =\sum_{i,j=1}^n (f_{ii} f_{jj}- f_{ij}^2)=g^2  R_{g,2}+2g g_1^2 \sum_{j \neq 1} g_{jj} \geq 0 \). 

\subsection{Our assumptions on the flat side and along the interface} \label{subsec:assump}
In this work, we impose the same assumptions as in P. Daskalopoulos and K.-A. Lee \cite{daskalopoulos-lee04}, the two-dimensional case. Specifically, we assume the following conditions:
\begin{itemize}
\item The hypersurface \(\Sigma\) at time \(t=0\) satisfies
\begin{equation}
\Sigma=\Sigma_0 \cup \Sigma_1
\end{equation}
where \(\Sigma_0\) is the flat side and \(\Sigma_1\) is the strictly convex part of the hypersurface. The interface between the two parts is
\begin{equation}
\Gamma=\Sigma_0 \cap \Sigma_1 .
\end{equation}

\item Because the equation (\ref{f_t}) is invariant under both rotation and translation, we can assume that \(\Sigma_0\) is in the hyperplane \( \{x_{n+1}=0\} \) and \( \Sigma_1\) lies above the hyperplane. 

\item A part of the surface \(\Sigma\), which we will also call \(\Sigma\), can be considered as the graph of a function
\begin{equation}
x_{n+1}=f(x)
\end{equation}
on a compact domain \(\Omega \subset \mathbb{R}^{n}\) containing \(\Sigma_0\), and we can set \(\Omega=\{x \in \mathbb{R}^{n} ; |Df(x)|< \infty \}\). 
\item The function f vanishes quadratically at \(\Gamma\), in other words we assume that at time \(t=0\), \(x \in \Gamma\), for any tangential direction \( \tau \) at \( \Gamma \),
\begin{equation}
|Dg(x)| \geq \lambda \ \text{ and } \ D^2_{\tau \tau} g(x) \geq \lambda
\end{equation}
for some number \(\lambda>0\). This non-degeneracy condition makes \( \Gamma \) start moving at any point at time \( t=0\) and the flow behaves as a free boundary problem.

\item A closed disc \(D_{\rho_0}=\{ x \in \mathbb{R}^n ; |x|\leq \rho_0 \} \) is contained in the flat side \(\Sigma_0 (t)\), whose area should be nonzero for time \(0 \leq t \leq T\) and \(0<T<T_c\), where \(T_c\) is the time when the area of the the flat side shrinks to zero.

\item The domain of \(f\) is contained in an \(n\)-dimensional ball \(B_R,\  R>0 \).

\end{itemize}

\subsection{Main results} 
In this paper, we prove the following theorem:
\begin{theorem}  \label{thm:main}
Under the assumptions in the subsection \ref{subsec:assump}, the pressure-like function \( g = \sqrt{2f} \) is smooth in \(\Omega(t)=\{x \in \mathbb{R}^{n} ; |Df(x,t)|< \infty \}\) up to the interface \(\Gamma(t)=\Sigma_0 (t) \cap \Sigma_1 (t) \) on time \(0 < t<T\) for all \(T<T_c\). In particular the free boundary \(\Gamma(t)\) between the strictly convex flow and the flat side will be a smooth curve for all time \(0<t<T_c\). 
\end{theorem}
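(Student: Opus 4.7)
The plan is to carry out in higher dimensions the program established for the two-dimensional Gauss curvature flow in Daskalopoulos--Lee \cite{daskalopoulos-lee04}, now applied to the scalar-curvature evolution (\ref{eq:g_t}). The overall strategy is a partial hodograph / Legendre transformation in the non-degenerate direction to straighten the free boundary, followed by application of the degenerate fully nonlinear regularity theory of Daskalopoulos--Lee \cite{daskalopoulos-lee03} on the fixed domain, and finally pulling back to obtain $C^\infty$ regularity of $g$ up to $\Gamma(t)$.

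First I would propagate the initial non-degeneracy $|Dg|\ge\lambda$ and $D^2_{\tau\tau}g\ge\lambda$ on $\Gamma$ for a short time, using the boundary evolution $g_t=g_\nu^3 H$ noted above together with barrier arguments and the maximum principle. This already shows the interface moves with finite, non-degenerate normal speed, so $\Gamma(t)$ is Lipschitz and $g_{x_1}$ is bounded away from zero in a neighborhood of $\Gamma(t)$ if we orient $x_1$ along the outer normal. Next I would derive the a priori estimates announced in the abstract: the sharp derivative bounds $|Dg|\asymp 1$ and a matching bound on the tangential Hessian near $\Gamma(t)$, optimal decay estimates $\sigma_2 \asymp g$ as one approaches $\Gamma(t)$, and the Aronson--B\'enilan-type lower bound obtained by applying the maximum principle to a suitable quantity built from $\sigma_2$ and $g$ (an analogue of $\partial_t\log v\ge -C/t$ for the porous medium equation, adapted to the second elementary symmetric function). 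These estimates imply that the ratio $\sigma_2/g$ is bounded above and below and is H\"older continuous up to $\Gamma(t)$ by a Krylov--Safonov / De~Giorgi--Nash--Moser argument applied to the transformed equation.

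With $g_{x_1}\ge\lambda/2$ in hand, I would perform the partial Legendre transform $z=g(x_1,x_2,\ldots,x_n,t)$, $x_1=h(z,x_2,\ldots,x_n,t)$, which straightens $\Gamma(t)$ to the fixed hyperplane $\{z=0\}$. A direct computation turns (\ref{eq:g_t}) into a quasilinear degenerate parabolic equation of the form $h_t=F(z,Dh,D^2h)$ whose linearization along $\{z=0\}$ is exactly the model operator treated in \cite{daskalopoulos-lee03}. Combining the Schauder-type theory for such model degenerate operators with the H\"older control of $\sigma_2/g$ obtained in the previous step to handle the coefficient perturbations yields $h\in C^\infty$ up to $\{z=0\}$; pulling back gives $g\in C^\infty$ up to $\Gamma(t)$ for a short time $0<t<t_0$.

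For the all-time statement, a continuation argument suffices: as long as $t<T_c$, the flat side has positive $n$-dimensional measure, and by the maximum principle applied to the evolving $g$ the non-degeneracy constants at $\Gamma(t)$ cannot deteriorate, so the previous estimates can be restarted from any $t_1<T_c$. The main obstacle I expect is the Aronson--B\'enilan-type lower bound for $\sigma_2$, since the scalar curvature operator is a fully nonlinear second symmetric function rather than a divergence-form porous-medium type operator, so the choice of the auxiliary quantity and the algebraic manipulations controlling the reaction terms in its evolution equation must be tailored carefully to the structure of $\sigma_2$; getting this bound to match the optimal decay rate is the crux of the whole argument, because everything downstream (H\"older regularity, hodograph transform, Schauder theory) hinges on it.
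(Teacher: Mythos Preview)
Your proposal is correct and follows essentially the same program as the paper: finite and non-degenerate interface speed via scaled barriers, sharp first- and second-order estimates on $g$ including an Aronson--B\'enilan-type lower bound (the paper works with $R_{g,2}$ rather than $\sigma_2$ directly), the partial hodograph transform $x_1=h(z,x_2,\ldots,x_n,t)$ to reduce to the degenerate model operator of \cite{daskalopoulos-lee03}, $C^{2,\alpha}_s$ estimates on $h$, short-time existence via the Inverse Function Theorem as in \cite{daskalopoulos-hamilton99}, and finally a continuation argument up to $T_c$. Your identification of the Aronson--B\'enilan step as the crux is exactly right; the paper obtains it by a maximum-principle argument applied to an auxiliary quantity of the form $X=\dfrac{g_{11}}{g_1^2}+\exp(b|\nabla g|^2)$.
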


\subsection{Summary}

The outline of this paper is as follows. In the section \ref{sec:speed}, we show that the interface \(\Gamma\) moves at a finite and non-degenerate speed. In the section \ref{sec:derivative} we obtain the gradient estimate of the function \(g\) and the curvature estimates. In the section \ref{sec:holder} we change the coordinates and get the Hölder regularity of the transformed function. Finally in the section \ref{sec:existence}, we prove the all-time existence and \(C^\infty\) regularity up to the interface, which is our goal.

\subsection{Notations}
Here are some notations which we will use throughout the paper:
\begin{itemize}
\item \(I=1+g^2 |\nabla g|^2\).
\item \(J=|\nabla g|^2+g\).
\item \(R_{g,2}=\sum_{i,j=1}^n (g_{ii} g_{jj}- g_{ij}^2)\).
\item \(\overline{R}_{g,2}=\sum_{i,j=2}^n (g_{ii} g_{jj} -g_{ij}^2)+\frac{2}{I}\sum_{i=2}^n (g_{ii} g_{11} -g_{1i}^2) \).
\end{itemize}

\section{Finite and non-degenerate speed of the interface of the flat side} \label{sec:speed}

This section deals with the speed of the interface of the flat side. Our goal here is to show that the speed is finite and non-degenerate. We begin with the following lemma:
\begin{lemma} \label{lem:ineq:f_e}
The scaled function with sufficiently small \(\epsilon >0\)
\begin{equation}
f_\epsilon (x,t)=\frac{1}{1+C \epsilon} f((1+A\epsilon)x, (1+B\epsilon)t)
\end{equation} is a supersolution (or subsolution, respectively) of the equation (\ref{f_t}) if 
\begin{equation} \label{ineq:f_e}
\begin{split}
B+C-4A \ & \geq (C-A)\left( \frac{5|\nabla f_\epsilon|^2}{1+|\nabla f_\epsilon|^2}-2\frac{|\nabla f_\epsilon|^2 \delta_{ik}-f_{\epsilon,i} f_{\epsilon,k} }{(1+|\nabla f_\epsilon|^2) \delta_{ik}-f_{\epsilon,i} f_{\epsilon,k} } -2\frac{|\nabla f_\epsilon|^2 \delta_{jl}-f_{\epsilon,j} f_{\epsilon,l} }{(1+|\nabla f_\epsilon|^2) \delta_{jl}-f_{\epsilon,j} f_{\epsilon,l} } \right) \\
\\ & (\text{or} \ \leq , \ \text{respectively})
\end{split} 
\end{equation}
for each \( 1\leq i,j,k,l \leq n \).
\end{lemma}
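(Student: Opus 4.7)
The plan is to plug $f_\epsilon$ into the equation (\ref{f_t}), use that the underlying $f$ is a solution at the rescaled point, and linearize the resulting difference in $\epsilon$. Set $u=1+A\epsilon$, $v=1+B\epsilon$, $w=1+C\epsilon$, and $y=ux$, $s=vt$. The chain rule applied to $f_\epsilon(x,t)=w^{-1}f(y,s)$ gives $\partial_t f_\epsilon = (v/w) f_t(y,s)$, $\partial_i f_\epsilon = (u/w) f_i(y,s)$, $\partial_{ij} f_\epsilon = (u^2/w) f_{ij}(y,s)$, and $|\nabla f_\epsilon|^2 = (u/w)^2 |\nabla f|^2$.

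Substituting these into the right-hand side $F$ of (\ref{f_t}) and using the bilinearity of the minor $f_{ki}f_{lj}-f_{kj}f_{li}$ in the Hessian entries, and then substituting $f_t(y,s)=F(\nabla f, D^2 f)$, the supersolution condition $\partial_t f_\epsilon \geq F(\nabla f_\epsilon, D^2 f_\epsilon)$ reduces to
\[
\sum_{i,j,k,l}\Bigl[\tfrac{v/w}{\sqrt{1+|\nabla f|^2}}P_{ik}P_{jl}-\tfrac{u^4/w^2}{\sqrt{1+(u/w)^2|\nabla f|^2}}\widetilde P_{ik}\widetilde P_{jl}\Bigr](f_{ki}f_{lj}-f_{kj}f_{li})\geq 0,
\]
where $P_{ik}=\delta_{ik}-\frac{f_if_k}{1+|\nabla f|^2}$ and $\widetilde P_{ik}=\delta_{ik}-\frac{(u/w)^2 f_if_k}{1+(u/w)^2|\nabla f|^2}$. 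A pointwise-in-indices sufficient condition is
\[
\frac{vw}{u^4}\sqrt{\tfrac{1+(u/w)^2|\nabla f|^2}{1+|\nabla f|^2}}\,P_{ik}P_{jl}\geq \widetilde P_{ik}\widetilde P_{jl}\qquad\text{for each }i,j,k,l,
\]
and this is what I linearize in $\epsilon$.

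Taylor expansion yields $\frac{vw}{u^4}\sqrt{\cdots}\approx 1+\epsilon\bigl[B+C-4A+\frac{(A-C)|\nabla f|^2}{1+|\nabla f|^2}\bigr]$, while $(u/w)^2=1+2(A-C)\epsilon+O(\epsilon^2)$ gives $\widetilde P_{ik}=P_{ik}+\frac{2(C-A)\epsilon\, f_if_k}{(1+|\nabla f|^2)^2}+O(\epsilon^2)$. Using $P_{ik}(1+|\nabla f|^2)=(1+|\nabla f|^2)\delta_{ik}-f_if_k$, the ratio $\widetilde P_{ik}\widetilde P_{jl}/(P_{ik}P_{jl})$ expands as $1+\frac{2(C-A)\epsilon}{1+|\nabla f|^2}\bigl[\frac{f_if_k}{(1+|\nabla f|^2)\delta_{ik}-f_if_k}+\frac{f_jf_l}{(1+|\nabla f|^2)\delta_{jl}-f_jf_l}\bigr]+O(\epsilon^2)$. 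Matching the first-order coefficients and then applying the algebraic identities
\[
\frac{2f_if_k}{(1+|\nabla f|^2)[(1+|\nabla f|^2)\delta_{ik}-f_if_k]}=\frac{2\delta_{ik}}{(1+|\nabla f|^2)\delta_{ik}-f_if_k}-\frac{2}{1+|\nabla f|^2},
\]
\[
\frac{|\nabla f|^2\delta_{ik}-f_if_k}{(1+|\nabla f|^2)\delta_{ik}-f_if_k}=1-\frac{\delta_{ik}}{(1+|\nabla f|^2)\delta_{ik}-f_if_k},
\]
reorganizes the condition exactly into (\ref{ineq:f_e}); at $\epsilon$-order, $\nabla f$ and $\nabla f_\epsilon$ coincide, so writing the statement in terms of $\nabla f_\epsilon$ is harmless. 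The subsolution case is parallel with every inequality reversed.

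The main obstacle is the algebraic reconciliation in the last step: the linearization naturally produces the coefficient in the form $\delta_{ik}/[(1+|\nabla f|^2)\delta_{ik}-f_if_k]$, while the statement writes it as $[|\nabla f|^2\delta_{ik}-f_if_k]/[(1+|\nabla f|^2)\delta_{ik}-f_if_k]$; these agree only after the constant $-4$ produced by the second identity above combines with $\frac{|\nabla f|^2}{1+|\nabla f|^2}$ to give the $\frac{5|\nabla f|^2}{1+|\nabla f|^2}$ in the statement. A secondary concern is that the pointwise-in-indices reduction is sufficient but not necessary for the tensor sum to have the correct sign; since the lemma is stated with ``if'' rather than ``iff,'' this sufficient form suffices and is exactly the version convenient for use as a barrier in later sections.
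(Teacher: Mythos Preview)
Your proposal is correct and follows essentially the same route as the paper's proof: substitute the scaling, compare $\partial_t f_\epsilon$ with the operator applied to $f_\epsilon$, isolate an index-dependent correction factor, and linearize it in $\epsilon$. The only cosmetic difference is that the paper packages the factor as $E_{ijkl}$ written entirely in terms of $\nabla f_\epsilon$ (so the $5/2$ power and hence the $5|\nabla f_\epsilon|^2/(1+|\nabla f_\epsilon|^2)$ term appear directly), whereas you write things in terms of $\nabla f$ at the rescaled point and then invoke the two algebraic identities to recombine into the stated form; your ``main obstacle'' is thus a bookkeeping artifact of this choice rather than a substantive step. Your remark that the termwise condition is merely sufficient matches the paper's treatment as well.
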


\begin{proof}
Let \(A\), \(B\), \(C \in \mathbb{R}\) be constants. We scale the function \(f(x,t)\) with those constants by
\begin{equation}
f_\epsilon (x,t)=\frac{1}{1+C \epsilon} f((1+A\epsilon)x, (1+B\epsilon)t) \ , \ \epsilon > 0 \ .
\end{equation}

Then its partial derivatives of the first order are
\begin{equation}
f_{\epsilon,t}(x,t)=\frac{1+B\epsilon}{1+C\epsilon}f_t((1+A\epsilon)x, (1+B\epsilon)t),
\end{equation}
\begin{equation} 
f_{\epsilon,i}=\frac{\partial f_\epsilon}{\partial x^i}=\frac{1+A\epsilon}{1+C\epsilon}f_i((1+A\epsilon)x, (1+B\epsilon)t),
\end{equation} with
\begin{equation} \label{grad f_e}
|\nabla f_\epsilon|^2=\left( \frac{1+A\epsilon}{1+C\epsilon} \right)^2 | \nabla f((1+A\epsilon)x, (1+B\epsilon)t)|^2
\end{equation}

We denote \(M=\left(\frac{1+C\epsilon}{1+A\epsilon} \right)^2\). Rearranging the equation of the scalar curvature flow about \(f_\epsilon\), we get the following equation:
\begin{equation} \begin{split} \label{eq: f_e,t}
\frac{\partial}{\partial_t} f_{\epsilon} &= \frac{E_{ijkl} (x)}{2\sqrt{1+|\nabla_x f|^2}}(\delta_{ik}-\frac{f_i f_k}{1+|\nabla_x f|^2})(\delta_{jl}-\frac{f_j f_l}{1+|\nabla_x f|^2})(f_{ki} f_{lj}-f_{kj} f_{li})  \ ,
\end{split} \end{equation}
where
\begin{equation} \begin{split}
E_{ijkl} (x) =&\frac{(1+B\epsilon)(1+C\epsilon)}{(1+A\epsilon)^4} \left( \frac{1+|\nabla f_\epsilon|^2}{1+M|\nabla f_\epsilon|^2} \right)^{5/2}   \frac{(1+M |\nabla f_\epsilon|^2) \delta_{ik}-M f_{\epsilon,i} f_{\epsilon,k} }{(1+|\nabla f_\epsilon|^2) \delta_{ik}-f_{\epsilon,i} f_{\epsilon,k} } \frac{(1+M |\nabla f_\epsilon|^2) \delta_{jl}-M f_{\epsilon,j} f_{\epsilon,l} }{(1+ |\nabla f_\epsilon|^2) \delta_{jl}- f_{\epsilon,j} f_{\epsilon,l} } .
\end{split} \end{equation}

When \(\epsilon \ll 1\), the factor \(E_{ijkl} (x)\) is
\begin{equation} \begin{split}
&E_{ijkl} (x)= (1+(B+C-4A)\epsilon)( 1-5(C-A)\epsilon \frac{|\nabla f_\epsilon|^2}{1+|\nabla f_\epsilon|^2} ) \left( 1+2(C-A)\epsilon \frac{|\nabla f_\epsilon|^2 \delta_{ik}-f_{\epsilon,i} f_{\epsilon,k} }{(1+|\nabla f_\epsilon|^2) \delta_{ik}-f_{\epsilon,i} f_{\epsilon,k} } \right) \\
& \cdot \left( 1+2(C-A)\epsilon \frac{|\nabla f_\epsilon|^2 \delta_{jl}-f_{\epsilon,j} f_{\epsilon,l} }{(1+|\nabla f_\epsilon|^2) \delta_{jl}-f_{\epsilon,j} f_{\epsilon,l} } \right) +o(\epsilon^2) \\
&= 1+\epsilon \left( B+C-4A-(C-A)\left( \frac{5|\nabla f_\epsilon|^2}{1+|\nabla f_\epsilon|^2}-2\frac{|\nabla f_\epsilon|^2 \delta_{ik}-f_{\epsilon,i} f_{\epsilon,k} }{(1+|\nabla f_\epsilon|^2) \delta_{ik}-f_{\epsilon,i} f_{\epsilon,k} } -2\frac{|\nabla f_\epsilon|^2 \delta_{jl}-f_{\epsilon,j} f_{\epsilon,l} }{(1+|\nabla f_\epsilon|^2) \delta_{jl}-f_{\epsilon,j} f_{\epsilon,l} } \right) \right) \\
&\ +o(\epsilon^2) \\
& \geq 1 (\text{or}  \leq 1 ),
\end{split} \end{equation}
if the inequality with \( \geq (\text{or}  \leq , \ \text{respectively} ) \) in \( (\ref{ineq:f_e}) \) holds for each \( 1\leq i,j,k,l \leq n \).

So the evolution equation (\ref{eq: f_e,t}) of the scaled function \(f_{\epsilon}(x,t) \) finishes the proof when \(\epsilon>0\) is sufficiently small.
\end{proof}

This lemma implies the following lemma, from which we see that the propagating speed of the free boundary is finite.
\begin{lemma} \label{ineq:f_e}
There exist a constant \(\delta_0>0\), a negative constant \(B\) and a positive constant \(C\) satisfying
\begin{equation}
-C f(x,t) + x \cdot \nabla f(x,t)+Bt f_t(x,t) \geq 0
\end{equation} on the set \(\mathcal{A}_{\delta_0}=\{(x,t); \ 0<f(x,t) \leq \delta_0, \ 0\leq t \leq T \}\).
\end{lemma}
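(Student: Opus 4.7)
The plan is to apply Lemma~\ref{lem:ineq:f_e} with $A=1$ and then invoke the comparison principle. With $A=1$ the derivative $\frac{d}{d\epsilon}\big|_{\epsilon=0}f_\epsilon(x,t)$ equals precisely $-Cf+x\cdot\nabla f+Btf_t$, so once $f_\epsilon\geq f$ is established on $\mathcal{A}_{\delta_0}$ for all sufficiently small $\epsilon>0$, dividing by $\epsilon$ and letting $\epsilon\to 0^+$ yields the desired inequality. The task therefore reduces to choosing $B<0$, $C>0$ and $\delta_0>0$ such that (i) the sufficient condition (\ref{ineq:f_e}) of Lemma~\ref{lem:ineq:f_e} holds on $\mathcal{A}_{\delta_0}$, making $f_\epsilon$ a supersolution of (\ref{f_t}), and (ii) $f_\epsilon\geq f$ on the parabolic boundary of $\mathcal{A}_{\delta_0}$; the comparison principle, valid because (\ref{f_t}) is uniformly parabolic on $\{f>0\}$ where the hypersurface is strictly convex, then propagates $f_\epsilon\geq f$ into the interior.

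The parabolic boundary splits into three pieces. The interface $\{f=0\}$ is trivial since $f_\epsilon\geq 0=f$. At $t=0$ I would introduce $h(x)=x\cdot\nabla f(x,0)/f(x,0)$; convexity of $f$ with $f(0,0)=0$ gives $h\geq 1$, while the quadratic nondegeneracy at $\Gamma$ forces $h\to\infty$ near the interface, so shrinking $\delta_0$ ensures $h\geq C/A$ uniformly on $\mathcal{A}_{\delta_0}\cap\{t=0\}$. Integrating $\frac{d}{d\lambda}\log f(\lambda x,0)=h(\lambda x)/\lambda$ along the dilation and applying Bernoulli's inequality delivers $f((1+A\epsilon)x,0)\geq(1+A\epsilon)^{C/A}f(x,0)\geq(1+C\epsilon)f(x,0)$, i.e.\ $f_\epsilon(x,0)\geq f(x,0)$. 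On the outer level $\{f=\delta_0\}$, $x\cdot\nabla f\geq h\delta_0$ is correspondingly large and $f_t$ is uniformly bounded by some $K_1$; a first-order Taylor expansion of $f_\epsilon-f$ verifies the needed non-negativity provided $|B|\leq (Ah-C)\delta_0/(TK_1)$, a positive upper bound as soon as $h>C/A$.

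For the supersolution condition (i), I would use that on $\mathcal{A}_{\delta_0}$ the gradient is small, $|\nabla f|^2\lesssim f\leq \delta_0$, thanks to convexity and quadratic vanishing at $\Gamma$. Consequently each factor $\frac{|\nabla f|^2\delta_{ik}-f_if_k}{(1+|\nabla f|^2)\delta_{ik}-f_if_k}$ in (\ref{ineq:f_e}) lies in $[0,1]$, and the right-hand side of (\ref{ineq:f_e}) is itself $O(\delta_0)$ (the supremum over $(i,j,k,l)$ being attained at the diagonal $i=k$, $j=l$ where the bracket is $O(|\nabla f|^2)$, while the off-diagonal cases contribute negative quantities and thus ease the inequality). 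With $A=1$ and $C>4$, the condition (\ref{ineq:f_e}) reduces to $B\geq 4-C+O(\delta_0)$, which admits a strictly negative $B$ once $\delta_0$ is small.

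The hardest step will be reconciling the two opposing demands on $C$: the supersolution condition pushes $C>4A$ so that (\ref{ineq:f_e}) is compatible with a negative $B$, whereas the initial and outer-boundary estimates cap $C$ by $h_0:=\inf_{\mathcal{A}_{\delta_0}} h$. Both constraints coexist only because the quadratic nondegeneracy forces $h_0\to\infty$ as $\delta_0\to 0$: one fixes $C>4$, then shrinks $\delta_0$ until $h_0>C$, and finally picks $B\in(4-C,0)$ also small enough in absolute value to respect the outer-boundary bound. With these choices, $f_\epsilon$ is a supersolution dominating $f$ on the parabolic boundary, so the comparison principle yields $f_\epsilon\geq f$ throughout $\mathcal{A}_{\delta_0}$, and passing to the limit $\epsilon\to 0^+$ completes the proof.
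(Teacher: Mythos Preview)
Your strategy is the same as the paper's: apply Lemma~\ref{lem:ineq:f_e} with $A=1$, verify the supersolution criterion on $\mathcal{A}_{\delta_0}$, check $f_\epsilon\ge f$ on the parabolic boundary, and differentiate at $\epsilon=0$. The paper simply fixes the concrete values $C=8$, $B=-\delta_0^2$ rather than carrying a generic $C>4$, but the logic is identical.

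There is one genuine gap. You justify the supersolution condition by asserting $|\nabla f|^2\lesssim f$ ``thanks to convexity and quadratic vanishing at $\Gamma$''. Quadratic vanishing is an \emph{initial} hypothesis; for $t>0$ it is part of what the whole section is building toward, so it cannot be invoked here. Convexity alone gives only $|\nabla f|^2\le C\,f$ when $f$ already vanishes to order~$2$, which is circular. The paper avoids this by using the uniform $C^{1,1}$ bound on $f$ (bounded principal curvatures, hence $|D^2f|\le M$) together with $\nabla f=0$ on $\Gamma(t)$ to get $|\nabla f(x,t)|\le M\,d(x,\Gamma(t))$; it then controls $d(\cdot,\Gamma)$ on $\mathcal{A}_{\delta_0}$ via the elementary inclusions $\{0<f\le\delta_0\}\subset\{d(\cdot,\Gamma)\le\eta\rho_0/2\}$ and $\{(1+\epsilon)x:d(x,\Gamma)\le\eta\rho_0/2\}\subset\{d(\cdot,\Gamma)\le\eta\rho_0\}$, and uses $f_t\ge0$ to pass to the shifted time $(1+B\epsilon)t$. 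Replacing your $|\nabla f|^2\lesssim f$ step by this $C^{1,1}$ argument repairs the proof.

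A smaller point: in your $t=0$ step the integral of $h(\lambda x)/\lambda$ requires $h(\lambda x)\ge C$ for all $\lambda\in[1,1+\epsilon]$, so you need $h$ large on a set slightly larger than $\{f(\cdot,0)\le\delta_0\}$; this is harmless (work on $\{f\le 2\delta_0\}$), but should be said. The paper sidesteps the integration entirely by checking only $\frac{d}{d\epsilon}f_\epsilon|_{\epsilon=0}>0$ on the parabolic boundary and then appealing to continuity in $\epsilon$.
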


\begin{proof}
Because \( f(x,t) \) is uniformly continuous on \( t \in [0,T] \), for any \(0< \eta <1\) there exists \(0<\delta_0 \ll \eta < 1\) such that 
\begin{equation} \label{set:A_D}
\{ x ; 0 < f(x,t) \leq \delta_0 \} \subset \{ x ; d(x,\Gamma(t)) \leq \frac{\eta \rho_0}{2} \} \ ,
\end{equation}
\begin{equation} \label{set:near G}
\{ (1+\epsilon) x ; d(x,\Gamma(t)) \leq \frac{\eta \rho_0}{2} \} \subset \{ x ; d(x,\Gamma(t)) \leq \eta \rho_0 \}
\end{equation}
on \(0\leq t \leq T\), for all \(\epsilon \ll \delta_0\). Let \(\Gamma(t)=\partial \{x ; f(x,t)>0 \}\) be the interface.
Consider the scaled function \(f_\epsilon\) as in Lemma \ref{lem:ineq:f_e} with \(A=1\), \(B=-\delta_0^2\), and \(C=8\). To prove this lemma, it is sufficient to show that \(f_\epsilon \geq f\) on \(\mathcal{A}_{\delta_0}\) so that \(\frac{d}{d \epsilon} |_{\epsilon=0} f_\epsilon \geq 0\) on \(\mathcal{A}_{\delta_0}\).\\

The inequality (\ref{ineq:f_e}) in Lemma \ref{lem:ineq:f_e} for \(f_\epsilon\) to be a supersolution becomes
\begin{equation}
-\delta_0^2+4 \geq 7 \left( \frac{5|\nabla f_\epsilon|^2}{1+|\nabla f_\epsilon|^2}-2\frac{|\nabla f_\epsilon|^2 \delta_{ik}-f_{\epsilon,i} f_{\epsilon,k} }{(1+|\nabla f_\epsilon|^2) \delta_{ik}-f_{\epsilon,i} f_{\epsilon,k} } -2\frac{|\nabla f_\epsilon|^2 \delta_{jl}-f_{\epsilon,j} f_{\epsilon,l} }{(1+|\nabla f_\epsilon|^2) \delta_{jl}-f_{\epsilon,j} f_{\epsilon,l} } \right) 
\end{equation}
for each \( 1\leq i,j,k,l \leq n \).

We can observe that if \( i \neq j \)
\begin{equation}
\frac{|\nabla f_\epsilon|^2 \delta_{ij}-f_{\epsilon,i} f_{\epsilon,j} }{(1+|\nabla f_\epsilon|^2) \delta_{ij}-f_{\epsilon,i} f_{\epsilon,j} }=1 \ ,
\end{equation} and that if \( i=j \)
\begin{equation}
0 \leq \frac{|\nabla f_\epsilon|^2  }{1+|\nabla f_\epsilon|^2 } \leq \frac{|\nabla f_\epsilon|^2 \delta_{ij}-f_{\epsilon,i} f_{\epsilon,j} }{(1+|\nabla f_\epsilon|^2) \delta_{ij}-f_{\epsilon,i} f_{\epsilon,j} }=\frac{ \sum_{j \neq i} f_{\epsilon,j}^2 }{ 1+\sum_{j \neq i} f_{\epsilon,j}^2 } <1,
\end{equation}
so we have 
\begin{equation}
\begin{split}
& -\delta_0^2+4 - 7 \left( \frac{5|\nabla f_\epsilon|^2}{1+|\nabla f_\epsilon|^2}-2\frac{|\nabla f_\epsilon|^2 \delta_{ik}-f_{\epsilon,i} f_{\epsilon,k} }{(1+|\nabla f_\epsilon|^2) \delta_{ik}-f_{\epsilon,i} f_{\epsilon,k} } -2\frac{|\nabla f_\epsilon|^2 \delta_{jl}-f_{\epsilon,j} f_{\epsilon,l} }{(1+|\nabla f_\epsilon|^2) \delta_{jl}-f_{\epsilon,j} f_{\epsilon,l} } \right) \\
& \geq -\delta_0^2+4 - 7 \frac{|\nabla f_\epsilon|^2}{1+|\nabla f_\epsilon|^2}
\end{split}
\end{equation}
for each \( 1\leq i,j,k,l \leq n \). This inequality holds if \( |\nabla f_\epsilon|^2 \leq \frac{1}{6} \) and \( 0<\delta_0 <1\) . By the equation (\ref{grad f_e}), \( |\nabla f_\epsilon (x,t) | \leq |\nabla f((1+\epsilon)x, (1+B\epsilon)t )| \) with \(A=1\), \(B=-\delta_0^2\), and \(C=8\). So it is enough to show that  \( |\nabla f((1+\epsilon)x, (1-\delta_0^2 \epsilon)t )|^2 \leq \frac{1}{ \sqrt{6} } \) .

We note that the function \(f\) is of the class \(C^{1,1}\). When the dimension \(n\) of the scalar curvature flow is 2, the flow becomes the Gauss curvature flow which has been considered by \cite{daskalopoulos-lee04}; they used the result in Andrews \cite{andrews99} that the viscosity solution of the scalar curvature flow has bounded mean curvature \(H\). Hence, with the convexity assumption, all principal curvatures are positive and bounded by a uniform constant and the function \(f\) should be \(C^{1,1}\) as in \cite{andrews99}. \\

For scalar curvature flows of higher dimension \(n >2 \), the sum of all the squared principal curvatures \(|A|^2= \sum_{i=1}^n \lambda_i^2 \) and the mean curvature \(H\) of the flow are all bounded. This proves that every viscosity solution \(f\) to the scalar curvature flow has a uniform \(C^{1,1}\) estimate. As \(|\nabla f|=0\) along the interface \(\Gamma(t)\), the uniform \(C^{1,1}\) estimate of \(f\) implies that, for some positive constant \(M\) which does not depend on time \(t\), we have for \((x,t) \in \mathcal{A}_{\delta_0}\)
\begin{equation} \label{bdd:Df<d}
|\nabla f(x,t)|\leq M d(x, \Gamma(t)) .
\end{equation}

Note that the function \(f\) is nondecreasing in time \(t\) and \( f_t \geq 0\), because the scalar curvature \(\sigma_2\) is positive for all \(t>0\). The positivity of \(\sigma_2\) is derived from the fact that if the scalar flow is convex at \(t=0\), then it remains convex for all \(t>0\). \\

Assume that \( (x,t) \in \mathcal{A}_{\delta_0} \), i.e. \( f(x,t) \leq \delta_0 \). Then \( f((1+\epsilon)x, (1-\delta_0^2 \epsilon)t ) \leq \delta_0 \) because \( f_t \geq 0\). By (\ref{set:A_D}), we have
\begin{equation}
d(x, \Gamma((1-\delta_0^2 \epsilon)t) ) \leq \frac{\eta \rho_0}{2},
\end{equation}
and by (\ref{set:near G}),
\begin{equation}
d((1+\epsilon), \Gamma((1-\delta_0^2 \epsilon)t) ) \leq \eta \rho_0
\end{equation}
whose right-hand side becomes arbitrarily small if we choose \( \eta \) small. Consequently, by (\ref{bdd:Df<d}), we can make \( |\nabla f((1+\epsilon)x, (1-\delta_0^2 \epsilon)t )|^2 \leq \frac{1}{ \sqrt{6} } \) .

By applying the comparison principle to the supersolutions \(f_\epsilon\) and \(f\), we can show that if additionally \(f_\epsilon \geq f\) on the parabolic boundary \(\partial_p \mathcal{A}_{\delta_0}=\{(x,t) ; f(x,t)= \delta_0, 0 \leq t \leq T \}\) of \(\mathcal{A}_{\delta_0}\), then \(f_\epsilon \geq f\) in \(\mathcal{A}_{\delta_0}\). Now, the only remaining part is to prove that \(f_\epsilon \geq f\) on \(\partial_p \mathcal{A}_{\delta_0}\). From the simple calculation
\begin{equation} \begin{split}
\frac{d}{d \epsilon}f_\epsilon (x,t)=&-\frac{C}{(1+C \epsilon)^2} f((1+A\epsilon)x, (1+B\epsilon)t) +\frac{Ax}{1+C \epsilon}\cdot \nabla_x   f((1+A\epsilon)x, (1+B\epsilon)t) \\
&+\frac{Bt}{1+C \epsilon} f_t ((1+A\epsilon)x, (1+B\epsilon)t)  , 
\end{split}\end{equation} we have, for \(A=1\) in our assumption,
\begin{equation} \begin{split}
\frac{d}{d \epsilon}f_\epsilon (x,0)|_{\epsilon=0}=&-C f(x, 0) +x\cdot \nabla_x   f(x, 0) .
\end{split}\end{equation} \\

As \(x\cdot \nabla_x   f(x, 0)>0\), for small enough \(\delta_0\), \(\frac{d}{d \epsilon}f_\epsilon (x,0)|_{\epsilon=0}>0\) on \(\{x; \ f(x,0) \leq \delta_0\}\) so that \(f_\epsilon (x,0) \geq f(x,0)\) for small \(\epsilon >0\). When \(f(x,t)=\delta_0\), we see that \(d(x,\Gamma(t)) \leq \eta \rho_0 /2 \leq \eta \rho_0\). Since \(f\) is convex in the radius \(r\), the radial derivative \(f_r\) satisfies \(f_r \geq \frac{\delta_0}{\eta \rho_0}\) and it holds that \(r=d(0,x) \geq \rho_0 \) and \(x \cdot \nabla_x f = r f_r (x,t) \geq \rho_0 \frac{\delta_0}{\eta \rho_0}=\frac{\delta_0}{\eta}\) on \(\partial_p \mathcal{A}_{\delta_0}\). \\

Hence, for \(f=\delta_0\) on \(\partial_p \mathcal{A}_{\delta_0}\), if \(\eta\) is small enough,
\begin{equation}
\frac{d}{d \epsilon}f_\epsilon (x,t)|_{\epsilon=0}=-C f(x, t) +x\cdot \nabla_x   f(x, t)+Bt f_t (x, t) \geq -C \delta_0 +\frac{\delta_0}{\eta}-\delta_0^2 T |f_t|_{L^\infty}>0 ,
\end{equation} 
which means that \(f_\epsilon \geq f\) on \(\partial_p \mathcal{A}_{\delta_0}\), and the comparison principle finishes the proof.
\end{proof}

From the proof of Lemma \ref{ineq:f_e}, we see that \(f \in C^{1,1}\). Thus we may assume that  
\begin{equation}
\Omega(t)=\{x \in \mathbb{R}^{n} ; |Df(x,t)|< \infty \}
\end{equation}
for all time \(0<t<T_c\).

We can show that the free boundary moves with a finite speed when \(0 \leq t \leq T\). The radius of the free boundary \(\Gamma (t) \) is written by \(r=\gamma(\theta,t)\) with \(0 \leq \theta < 2\pi\)  in polar coordinates.
\begin{theorem}
Under the assumption that \(D_{\rho_0} \subset \Sigma_0 (T)\), there is a constant \(B>0\) such that
\begin{equation}
\gamma(\theta,t) \geq e^{-\frac{t-t_0}{B t_0}} \gamma(\theta, t_0)
\end{equation} for all \(0<t_0\leq t \leq T\) and \(0\leq \theta<2\pi\). In particular, the free boundary moves with a finite speed when \(0 \leq t \leq T\).
\end{theorem}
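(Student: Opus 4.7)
The plan is to leverage the supersolution comparison $f_\epsilon\geq f$ established in the proof of Lemma \ref{ineq:f_e} to produce a one-parameter scaling inequality for the radial function $\gamma(\theta,t)$, and then integrate that inequality into the claimed exponential lower bound.

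With the choice $A=1$, $B=-\delta_0^2$, $C=8$ from that proof, the scaled function $f_\epsilon(x,t)=\tfrac{1}{1+8\epsilon}f((1+\epsilon)x,(1-\delta_0^2\epsilon)t)$ satisfies $f_\epsilon\geq f$ on the closure of $\mathcal{A}_{\delta_0}$. Since $f,f_\epsilon\geq 0$, this forces the zero set of $f_\epsilon$ to lie inside that of $f$; the flat side of $f_\epsilon$ at time $t$ is $\tfrac{1}{1+\epsilon}\Sigma_0((1-\delta_0^2\epsilon)t)$, so
$$\frac{1}{1+\epsilon}\,\Sigma_0\bigl((1-\delta_0^2\epsilon)t\bigr)\subset\Sigma_0(t).$$
Both sides are convex bodies containing the origin (since $D_{\rho_0}\subset\Sigma_0(t)$), so this inclusion translates in polar coordinates into
$$\gamma\bigl(\theta,(1-\delta_0^2\epsilon)t\bigr)\leq(1+\epsilon)\,\gamma(\theta,t)$$
for every direction $\theta$ and every sufficiently small $\epsilon>0$.

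I then iterate this inequality. Set $s_k=t_0(1-\delta_0^2\epsilon)^{-k}$; the inequality at $t=s_{k+1}$ reads $\gamma(\theta,s_k)\leq(1+\epsilon)\gamma(\theta,s_{k+1})$, so by induction
$$\gamma(\theta,s_k)\geq(1+\epsilon)^{-k}\gamma(\theta,t_0).$$
Given $t\in[t_0,T]$, for each $k$ I choose $\epsilon_k$ with $(1-\delta_0^2\epsilon_k)^{-k}=t/t_0$; then $\epsilon_k\to 0$ and $k\delta_0^2\epsilon_k\to\log(t/t_0)$, so $(1+\epsilon_k)^{-k}\to(t_0/t)^{1/\delta_0^2}$. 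Passing to the limit yields
$$\gamma(\theta,t)\geq(t_0/t)^{1/\delta_0^2}\,\gamma(\theta,t_0)\geq e^{-(t-t_0)/(\delta_0^2 t_0)}\,\gamma(\theta,t_0),$$
where the last step uses $\log(t/t_0)\leq(t-t_0)/t_0$ for $t\geq t_0$. The theorem holds with $B=\delta_0^2$, and finite speed of the interface is immediate.

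The main obstacle is that $\gamma(\theta,\cdot)$ is not a priori differentiable in $t$, so one cannot directly Taylor-expand the scaling inequality into the ODE $\partial_t\log\gamma\geq-1/(\delta_0^2 t)$; the iteration above circumvents this by working entirely with the algebraic functional inequality and only passing to a limit at the end. A subsidiary check is that every iterate $s_k$ lies in $[t_0,T]$ where Lemma \ref{ineq:f_e} applies, which is immediate since $s_k\leq t\leq T$ by construction and the relevant spatial contractions keep the iterates inside $B_R$.
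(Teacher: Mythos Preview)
Your argument is correct and reaches the same constant $B=\delta_0^2$ as the paper, but the mechanism is different. The paper first differentiates the comparison $f_\epsilon\geq f$ at $\epsilon=0$ to extract the pointwise differential inequality $-Cf+x\cdot\nabla f+Btf_t\geq0$ (this is the content of Lemma~\ref{ineq:f_e}), and then integrates along the characteristic curves $s\mapsto e^{-(s-t_0)/(Bt_0)}x$ by showing that $s\mapsto e^{C(s-t_0)/(BT)}f(e^{-(s-t_0)/(Bt_0)}x,s)$ is nonincreasing; evaluating at an interface point $|x|=\gamma(\theta,t_0)$ forces $f(e^{-(t-t_0)/(Bt_0)}x,t)=0$ and hence the radial bound. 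You instead keep $\epsilon>0$ fixed, read the comparison $f_\epsilon\geq f$ as a containment of flat sides, translate it into the multiplicative inequality $\gamma(\theta,(1-\delta_0^2\epsilon)t)\leq(1+\epsilon)\gamma(\theta,t)$, and iterate. Your route avoids any differentiability assumption on $\gamma(\theta,\cdot)$ and never passes through the differential inequality; the paper's route is shorter once the differential inequality is in hand and uses only smoothness of $f$ (not of $\gamma$), which is already available. One small point worth making explicit in your write-up: the zero-set inclusion $\tfrac{1}{1+\epsilon}\Sigma_0((1-\delta_0^2\epsilon)t)\subset\Sigma_0(t)$ follows from the comparison on $\mathcal{A}_{\delta_0}$ because any boundary point $y$ of the left-hand set satisfies $y\in\Sigma_0((1-\delta_0^2\epsilon)t)$ by convexity, hence $f(y,t)\leq\delta_0$ for $\epsilon$ small by uniform continuity of $f$ in $t$, placing $y$ in the region where the comparison applies.
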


\begin{proof}
From Lemma \ref{ineq:f_e}, for \(0<t_0 \leq t \leq T\), we get the inequality
\begin{equation}\begin{split}
0 \geq \frac{C f(x,t)}{B t} - \frac{x}{ B t} \cdot \nabla f(x,t)+f_t(x,t)  \geq \frac{C f(x,t)}{ B T} - \frac{x}{ B t_0} \cdot \nabla f(x,t)+f_t(x,t) 
\end{split}\end{equation} 
so that
\begin{equation}\begin{split}
&\frac{d}{dt}(e^{\frac{C}{ B T}(t-t_0)}f( e^{-\frac{t-t_0}{ B t_0}} x, t))\\
&=e^{\frac{C}{ B T}(t-t_0)}\left( \frac{C f( e^{-\frac{t-t_0}{ B t_0}} x, t)}{ B T} - \frac{x}{ B t_0} \cdot \nabla f( e^{-\frac{t-t_0}{ B t_0}} x, t)+f_t( e^{-\frac{t-t_0}{ B t_0}} x, t) \right) \leq 0
\end{split}\end{equation} and hence
\begin{equation}\begin{split} \label{ineq:exp-f}
 e^{\frac{C}{ B T}(t-t_0)}f( e^{-\frac{t-t_0}{ B t_0}} x, t)\leq f(x,t_0) =0 \\
\end{split}\end{equation}
for \(|x|=\gamma(x,t_0)\), which implies the conclusion.

\end{proof}

For small \(\epsilon>0\) the \(\epsilon\)-level set of the function f moves with a finite speed as well, by the following theorem. Let us express the \(\epsilon\)-level set by its radius \(r=\gamma_\epsilon (\theta,t)\).

\begin{theorem}
Under the assumption that \(D_{\rho_0} \subset \Sigma_0 (T)\), there is a constant \(B<0\) such that
\begin{equation}
\gamma_\epsilon (\theta,t) \geq e^{-\frac{t-t_0}{|B|t_0}} \gamma_\epsilon (\theta, t_0)
\end{equation} for all sufficiently small \(\epsilon>0\), \(0<t_0\leq t \leq T\), and \(0\leq \theta<2\pi\). In particular, for each \(\epsilon>0\) the level set \(r=\gamma_\epsilon (\theta, t)\) moves with a finite speed when \(0 \leq t \leq T\). 
\end{theorem}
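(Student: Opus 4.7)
The plan is to run the argument of the preceding theorem (which treated the interface $\{f = 0\}$) almost verbatim, but pushing the trajectory along the level set $\{f = \epsilon\}$ instead. The machinery is again the pointwise inequality of Lemma \ref{ineq:f_e},
\[
-Cf(x,t) + x \cdot \nabla f(x,t) + Bt\, f_t(x,t) \geq 0, \qquad B < 0,\ C > 0,
\]
valid on $\mathcal{A}_{\delta_0}$. Dividing by $Bt < 0$ flips the sense and recasts the inequality as
\[
f_t(x,t) \leq \frac{x \cdot \nabla f(x,t) - C f(x,t)}{|B|\, t}.
\]
Taking $\epsilon < \delta_0$ (the concrete meaning of ``sufficiently small'') keeps the whole $\epsilon$-level set inside $\mathcal{A}_{\delta_0}$ for every $t \in [0,T]$, so this bound is at our disposal where we need it.

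For a fixed direction $\hat{\theta}$, starting from $x_0 = \gamma_\epsilon(\theta, t_0)\,\hat{\theta}$ I would follow the contracting radial trajectory
\[
y(s) = e^{-(s - t_0)/(|B| t_0)}\, x_0, \qquad s \in [t_0, t].
\]
A direct computation gives
\[
\frac{d}{ds} f(y(s), s) = -\frac{1}{|B| t_0}\, y(s) \cdot \nabla f(y(s), s) + f_t(y(s), s).
\]
Since the origin lies in the flat side (by $D_{\rho_0} \subset \Sigma_0(t)$), convexity of $f$ restricted to the ray through the origin and $y(s)$ yields $y(s)\cdot\nabla f(y(s), s) \geq 0$. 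Combined with $C f \geq 0$ and the monotonicity $1/s \leq 1/t_0$ for $s \geq t_0$, the rearranged inequality above upgrades to
\[
f_t(y(s), s) \leq \frac{y(s) \cdot \nabla f(y(s), s)}{|B|\, s} \leq \frac{y(s) \cdot \nabla f(y(s), s)}{|B|\, t_0},
\]
forcing $\tfrac{d}{ds} f(y(s), s) \leq 0$ on any portion of the trajectory that lies in $\mathcal{A}_{\delta_0}$.

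Once monotonicity of $s \mapsto f(y(s), s)$ is established, the conclusion follows at once: $f(y(t), t) \leq f(y(t_0), t_0) = \epsilon$, so the contracted point $y(t)$ lies in the sublevel set $\{f(\cdot, t) \leq \epsilon\}$, and radial monotonicity of $f$ places the $\epsilon$-level set at least as far out in direction $\hat{\theta}$, giving
\[
\gamma_\epsilon(\theta, t) \geq |y(t)| = e^{-(t - t_0)/(|B|\, t_0)}\, \gamma_\epsilon(\theta, t_0),
\]
which is the claimed estimate. The one genuinely delicate point is verifying that the entire trajectory $\{y(s) : s \in [t_0, t]\}$ remains in $\mathcal{A}_{\delta_0}$ so that the pointwise inequality is always valid: the upper bound $f \leq \delta_0$ is automatic from the monotonicity, starting below $\delta_0$, and the finite-speed estimate for $\Gamma$ from the previous theorem prevents $y(s)$ from crashing into $\{f = 0\}$ prematurely provided $\epsilon$ is chosen small enough. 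This bookkeeping is the main technical obstacle but it is routine.
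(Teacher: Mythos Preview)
Your argument is correct and is essentially the same as the paper's: the paper simply quotes the integrated inequality $e^{\frac{C}{|B|T}(t-t_0)} f\!\bigl(e^{-\frac{t-t_0}{|B|t_0}} x,\, t\bigr) \leq f(x,t_0)$ already obtained in the proof of the preceding theorem and evaluates it at a point $x_0$ with $f(x_0,t_0)=\epsilon$, whereas you re-derive that inequality by differentiating along the contracting trajectory, which is exactly how the paper produced it in the first place. One small remark: your final ``delicate point'' is not actually an obstacle --- if the trajectory ever reaches $\{f=0\}$ the conclusion $f(y(t),t)\leq\epsilon$ is immediate, so no appeal to the finite-speed estimate for $\Gamma$ is needed there; only the upper bound $f\leq\delta_0$ matters, and that is preserved by the monotonicity you already established.
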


\begin{proof}
Fix \(0 \leq \theta < 2\pi\). Let \(r_0\) be \(r_0=\gamma_\epsilon (\theta, t_0)\) and \(x_0\) be the point \((r_0,\theta)\) in polar coordinates. Then \(f(x_0,t_0)=\epsilon\) and by the inequality (\ref{ineq:exp-f}), we have
\begin{equation}
f( e^{-\frac{t-t_0}{|B|t_0}} x_0, t) \leq e^{-\frac{C}{|B|T}(t-t_0)} f( x_0, t_0) \leq f( x_0, t_0) = \epsilon=f(\gamma_\epsilon(\theta,t_0)),
\end{equation}
implying that
\begin{equation}
e^{-\frac{t-t_0}{|B|t_0}}\gamma_\epsilon (\theta, t_0) \leq \gamma_\epsilon(\theta,t) \ .
\end{equation}
\end{proof}

\begin{lemma}
There exist constants \(A>0\), \(B<0\), \(C>0\), and \(D>0\) such that
\begin{equation} \label{ineq:f_x f_t}
-Cf(x,t)+Ax \cdot \nabla_x f(x,t)+(-D+Bt) f_t (x,t)  \leq 0
\end{equation}
on \(\{f(x,t) \leq 1, \ \ 0\leq t \leq T\}\).
\end{lemma}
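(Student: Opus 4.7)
The plan is to mirror the argument of Lemma \ref{ineq:f_e}, producing a subsolution rather than a supersolution and introducing a fourth parameter \(D > 0\) corresponding to a time translation, which is available because the scalar curvature flow is autonomous. For constants \(A, C, D > 0\) and \(B < 0\) to be chosen, consider the family
\[
f_\epsilon(x,t) := \frac{1}{1+C\epsilon}\, f\bigl((1+A\epsilon)x,\; (1+B\epsilon)t - D\epsilon\bigr), \qquad 0 < \epsilon \ll 1,
\]
defined on the shifted time-slab \(t \in [D\epsilon, T]\). Differentiating at \(\epsilon = 0\) gives
\[
\left.\frac{d}{d\epsilon}\right|_{\epsilon=0} f_\epsilon(x,t) \;=\; -Cf(x,t) + A\, x \cdot \nabla_x f(x,t) + (Bt - D)\, f_t(x,t),
\]
so once \(f_\epsilon \leq f\) is established on \(\{f \leq 1,\; D\epsilon \leq t \leq T\}\) for all small \(\epsilon > 0\), inequality (\ref{ineq:f_x f_t}) follows for every \(t > 0\) and then extends to \(t = 0\) by continuity.

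To apply the parabolic comparison principle, \(f_\epsilon\) must be a subsolution and satisfy \(f_\epsilon \leq f\) on the parabolic boundary. The subsolution condition is the inequality (\ref{ineq:f_e}) of Lemma \ref{lem:ineq:f_e} with \(\geq\) replaced by \(\leq\); the extra shift \(-D\epsilon\) is absorbed by time-translation invariance, so the same algebraic identity applies. Because \(f \in C^{1,1}\) and the spatial domain lies in \(B_R\), \(|\nabla f_\epsilon|\) is uniformly bounded on \(\{f \leq 1\}\), and each fraction on the right of (\ref{ineq:f_e}) lies in \([0,1]\) as shown in the proof of Lemma \ref{ineq:f_e}. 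Choosing \(A\) large relative to \(B, C\) makes \(B + C - 4A\) as negative as needed, so the subsolution condition holds throughout \(\{f \leq 1\}\).

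The parabolic boundary is \(\{f = 1,\; D\epsilon \leq t \leq T\} \cup \{t = D\epsilon,\; f \leq 1\}\). On the side \(\{f = 1\}\), the \(C^{1,1}\) bounds make \(|\nabla f|\) and \(|f_t|\) uniformly bounded, so the derivative \(-C + A\, x \cdot \nabla f + (Bt - D)f_t\) is strictly negative once \(C\) is chosen large relative to \(A, D\), yielding \(f_\epsilon \leq f\) for small \(\epsilon\). On the bottom \(\{t = D\epsilon\}\), one has \(f_\epsilon(x, D\epsilon) = \tfrac{1}{1+C\epsilon} f((1+A\epsilon)x,\, BD\epsilon^2)\); expanding to first order in \(\epsilon\) reduces \(f_\epsilon(x, D\epsilon) \leq f(x, D\epsilon)\) to verifying
\[
-Cf(x, 0) + A\, x \cdot \nabla f(x, 0) - D f_t(x, 0) \leq 0.
\]
This last inequality holds trivially on the flat side (where \(f,\, \nabla f,\, f_t\) all vanish), in the strictly convex interior \(f_t\) is bounded below so large \(D\) absorbs \(A\, x \cdot \nabla f\), and near \(\Gamma(0)\) the \(C^{1,1}\) bound \(|\nabla f| \leq M\, d(x, \Gamma(0))\) combines with the non-degeneracy \(|Dg|,\, D^2_{\tau\tau} g \geq \lambda\) from Subsection \ref{subsec:assump} (which via \(f_t = \sigma_2 \sqrt{1+|\nabla f|^2}\) and \(g_t = g_\nu^3 H\) on \(\Gamma\) yields \(f_t \gtrsim d(x, \Gamma(0))\)) to give the matched-order bound \(A\, x \cdot \nabla f \leq D f_t\) for \(D\) sufficiently large. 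The comparison principle then closes the argument.

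The main obstacle is precisely this order-matching near \(\Gamma(0)\) on the bottom boundary: \(x \cdot \nabla f\) decays only linearly in \(d(x, \Gamma)\), and the only way to dominate it using the \(f_t\) term is through the linear lower bound \(f_t \gtrsim d(x, \Gamma)\), which is available only because of the quadratic non-degeneracy of \(g\) postulated in Subsection \ref{subsec:assump}. Everything else is a notationally heavier but conceptually direct adaptation of the supersolution construction in Lemma \ref{ineq:f_e}.
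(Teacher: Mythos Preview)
Your overall strategy---build a subsolution from the scaled family \(f_\epsilon\), verify the comparison on the parabolic boundary of \(\{f\le 1\}\), and differentiate at \(\epsilon=0\)---is exactly the paper's approach. The gap is in the order of parameter choices. You first declare ``\(A\) large relative to \(B,C\)'' to force the subsolution inequality, and then ``\(C\) large relative to \(A,D\)'' for the lateral boundary \(\{f=1\}\); these two requirements are mutually inconsistent. Moreover, making \(B+C-4A\) very negative is not by itself enough for the subsolution condition of Lemma~\ref{lem:ineq:f_e}: the right-hand side there is \((C-A)\) times an expression that, near the interface where \(|\nabla f_\epsilon|\to 0\), can be as low as \(-4\), so the constraint reads \(B+C-4A\le -4|C-A|\) rather than merely \(B+C-4A\le 0\).

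The clean fix, which is what the paper does, is to let \(B\) (not \(A\)) carry the subsolution condition: fix \(C=1\), take \(A>0\) small, and then choose \(B<0\) negative enough. With \(A\) small and \(|\nabla f|\) bounded on \(\{f=1\}\), the lateral inequality \(-1+Ax\cdot\nabla f+(Bt-D)f_t\le 0\) follows immediately because \((Bt-D)f_t\le 0\). Since \(B\) enters the boundary terms only through the nonpositive contribution \(Btf_t\), it is free to be as negative as the subsolution inequality demands. Your bottom-boundary argument (order-matching \(x\cdot\nabla f\lesssim d(x,\Gamma)\) against \(f_t\gtrsim d(x,\Gamma)\) via the nondegeneracy of \(g\)) is a legitimate alternative to the paper's choice of placing the bottom at a small positive time \(t^*\) and using \(g_t\ge c_0>0\) there; once the parameter circularity is repaired, the rest of your outline goes through.
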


\begin{proof}
Assume that \(g\) is smooth up to the interface \(\Gamma(t)\) for time \(0\leq t \leq \tau\) for some \(\tau >0\). Let \(t^*=\tau/2\) and \(\mathcal{A}_{t^*}=\{ f(x,t) \leq 1, t^* \leq t \leq T\}\). We want to show that for some negative constants \(A\) and \(B\), and positive constants \(C\) and \(D\), and for sufficiently small \(\epsilon>0\),
\begin{equation}
f_\epsilon (x,t)=\frac{1}{1+ C\epsilon} f((1+A\epsilon)x, (1+B\epsilon)t-D\epsilon) \leq f(x,t)
\end{equation} on \(\mathcal{A}_{t^*}\). We first choose \(C=1\) so that by Lemma \ref{lem:ineq:f_e}, \(f_\epsilon\) is a subsolution to the equation for the scalar curvature flow if and only if \(B+1-4A \leq 0\). Given \(A>0\) to be determined later, take \(B<0\) such that \(B\leq 4A-1\) . Then \(f_\epsilon\) is a subsolution, especially in \(\mathcal{A}_{t^*}\). Therefore, the comparison principle implies that it suffices to show that \(f_\epsilon \leq f\) on the parabolic boundary of \(\mathcal{A}_{t^*}\), where \(f(x,t)=1, \ t^*\leq t \leq T\) or \(f(x,t)\leq 1, \ t=t^*\). It is equivalent to show that \(\frac{\partial f_\epsilon}{\partial \epsilon}|_{\epsilon=0}=-f(x,t)+Ax \cdot \nabla_x f(x,t)+(-D+Bt) f_t (x,t)  \leq 0\). \\

On \(\{f(x,t)=1, \ t^*\leq t \leq T\}\), we can take \(A>0\) sufficiently small that we have \(-f(x,t)+Ax \cdot \nabla_x f(x,t)+(-D+Bt) f_t (x,t)=-1+Ax \cdot \nabla_x f(x,t) \leq 0\) since \(x \cdot \nabla_x f(x,t) \geq 0\) and \(f(\cdot,t) \) is uniformly \(C^{1,1}\) in \(0 \leq t \leq T \), \(f \leq 1\). Secondly, on the set \(\{f(x,t)\leq 1, \ t=t^*\}\) and for the pressure-like function \(g=\sqrt{2f}\), we see that \(-f(x,t)+Ax \cdot \nabla_x f(x,t)+(-D+Bt) f_t (x,t)=g\{-\frac{1}{2} g(x,t^*)+Ax\cdot \nabla_x g(x,t^*)+(-D+B t^*) g_t (x,t^*)\}\). Because \(g(x,t^*)>0\), \(B<0\), and \(g_t(x,t^*) \geq 0\), we only need to show that \(-g(x,t^*)+2Ax\cdot \nabla_x g(x,t^*)-2D g_t (x,t^*) \leq 0\). \\

However, we assumed the initial non-degeneracy condition for \(f\) and hence for \(g\) so that for sufficiently small \(t^*\)
\begin{equation}
|\nabla_x g (x,t^*)| \geq c \ \ \text{and} \ \ g_{ \tau \tau} \geq c \ \ \text{at the interface} \ \ \Gamma(t^*)
\end{equation} for some \(c>0\). For this reason there are some \(\rho>0\) and \(c_0>0\) satisfying \(g_t \geq c_0 >0\),  so that \(-g(x,t^*)+2A x\cdot \nabla_x g(x,t^*)-2D g_t(x,t^*)\leq0\) on the set \(\{0<g(x,t^*)<\sqrt{2}, \ d(x,\Gamma(t^*) < \rho \}\) if we take sufficiently small \(A>0\) since \(D>0\). The same estimate holds when \(0\leq t \leq t^*\). Hence \(f_\epsilon \leq f \) on \(\{ f(x,t) \leq 1, 0\leq t \leq T\}\).
\end{proof}

\begin{theorem}
There exists constants \(A>0\), \(B<0\), and \(D>0\) such that
\begin{equation}
\gamma(\theta,t) \leq e^{-\frac{A(t-t_0)}{D+|B|T}} \gamma (\theta, t_0)
\end{equation} for \(0<t_0\leq t<T\). Hence, the interface moves with a non-degenerate speed.
\end{theorem}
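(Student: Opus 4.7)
The plan is to repeat the strategy of Theorem 2.3, but starting from the subsolution-type inequality of the preceding lemma in place of the supersolution one. I will track $f$ along a radially shrinking comparison trajectory $y(\tau) = e^{-\mu(\tau - t_0)} x$ and, via a Gronwall-type monotonicity argument along it, show that whenever $f(x, t_0) > 0$ we have $f(y(t), t) > 0$ as well. This translates directly into the claimed upper bound on $\gamma(\theta, t)$.

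Taking $C = 1$ in the preceding lemma and using $-D + Bt = -(D + |B|t) < 0$, I would rewrite its conclusion in the differential form
\[
f_t(x,t) \ \geq \ \frac{A\, x \cdot \nabla_x f(x,t) - f(x,t)}{D + |B|t}
\]
on $\{f \leq 1,\ 0 \leq t \leq T\}$. Setting $\mu = A/(D+|B|T)$ and $\lambda = 1/(D+|B|t_0)$, I would consider the auxiliary function $\Psi(\tau) = e^{\lambda(\tau - t_0)} f(y(\tau), \tau)$ for $y(\tau) = e^{-\mu(\tau-t_0)} x$, $\tau \in [t_0, t]$. Differentiating and substituting the inequality above and regrouping yields
\[
\Psi'(\tau) \ \geq \ e^{\lambda(\tau-t_0)} \Bigl[\, f\,\Bigl(\lambda - \tfrac{1}{D+|B|\tau}\Bigr) + y(\tau) \cdot \nabla_x f \,\Bigl(\tfrac{A}{D+|B|\tau} - \mu \Bigr) \Bigr].
\]
Both bracketed coefficients are non-negative for every $\tau \in [t_0, T]$ by the choices of $\lambda$ and $\mu$, while $f \geq 0$ and $y(\tau) \cdot \nabla_x f \geq 0$ (the latter from the radial monotonicity of the convex function $f$ established earlier). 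Hence $\Psi$ is non-decreasing, so $\Psi(t) \geq \Psi(t_0) = f(x,t_0)$. Fixing $\theta$ and choosing $x = r \hat\theta$ with $r > \gamma(\theta, t_0)$, so that $f(x,t_0) > 0$, this forces $f(e^{-\mu(t-t_0)} r \hat\theta, t) > 0$, hence $e^{-\mu(t-t_0)} r > \gamma(\theta, t)$; letting $r \downarrow \gamma(\theta, t_0)$ gives the claimed inequality with exponential rate $\mu = A/(D+|B|T)$.

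The main technical obstacle is verifying that the comparison trajectory $(y(\tau), \tau)$ remains inside the set $\{f \leq 1\}$ on which the lemma's inequality is valid, throughout $[t_0, t]$. I would handle this by first taking $r$ sufficiently close to $\gamma(\theta, t_0)$ so that $f(r\hat\theta, \tau) \leq 1$ for all $\tau \in [t_0, T]$; this is possible by continuity in time together with the uniform $C^{1,1}$ bound on $f$ already proved earlier in this section. Since $|y(\tau)| \leq |x|$ and $f(\cdot, \tau)$ is radially increasing by convexity, we then obtain $f(y(\tau),\tau) \leq f(x,\tau) \leq 1$ automatically along the trajectory, so the differential inequality is indeed applicable at every point of the curve.
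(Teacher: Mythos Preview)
Your proposal is correct and follows essentially the same approach as the paper: both integrate the differential inequality of the preceding lemma along the shrinking trajectory $y(\tau)=e^{-\mu(\tau-t_0)}x$ to show that $e^{\lambda(\tau-t_0)}f(y(\tau),\tau)$ is non-decreasing, and then read off the bound on $\gamma(\theta,t)$. Your write-up is in fact more careful than the paper's, which contains evident typos in the exponential factor and does not explicitly verify that the trajectory remains in $\{f\le 1\}$.
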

\begin{proof}
For time \(0<t_0<t<T\), the inequality (\ref{ineq:f_x f_t}) implies that
\begin{equation} \begin{split}
0 &\leq \frac{C f(x,t)}{D+|B|t}-\frac{Ax}{D+|B|t}\cdot \nabla_x f(x,t) +f_t(x,t) \\ 
&\leq \frac{C f(x,t)}{D+|B|t_0}-\frac{Ax}{D+|B|T}\cdot \nabla_x f(x,t) +f_t(x,t)
\end{split} \end{equation} by which we obtain
\begin{equation} \begin{split}
\frac{d}{dt}(e^{\frac{C f(x,t)}{D+|B|t_0}}f(e^{-\frac{A(t-t_0)}{D+|B|T}}x,t)) \geq 0 \ .
\end{split} \end{equation}
Hence we have
\begin{equation}
e^{\frac{C f(x,t)}{D+|B|t_0}}f(e^{-\frac{A(t-t_0)}{D+|B|T}}x,t) \geq f(x,t_0) \ ,
\end{equation} and for sufficiently large \(D>0\)
\begin{equation}
f(e^{-\frac{A(t-t_0)}{D+|B|T}}x,t) \geq f(x,t_0) \ .
\end{equation} 
From the last inequality and the monotonicity of \(f\) in the radius, the conclusion follows.
\end{proof}

\section{Derivative estimates} \label{sec:derivative}
\subsection{Evolution equations of derivatives of \(g\)}
Let us consider the following linear operator, \(L[w]\),  which will occur in the evolution equations of \(g_m\) and \(g_{mp}\):
\begin{equation}
L[w]=\sum_{i,j=1}^n a_{ij}w_{ij} +\sum_{i=1}^n b_i w_i,
\end{equation}
where the coefficients are defined by
\begin{equation}
\begin{split}
a_{ii} &=\frac{1}{I^{3/2} } \Big(\sum_{j \neq i} \big( g_j^2 +g g_{jj}+g^3 ( g_k^2 g_{jj}-g_j g_kg_{kj} )  \big) -g^3 (g_i^2 g_{kk}-g_i g_k g_{ik} ) \Big) ,\\
a_{ij}&= -\frac{1}{I^{3/2} } \big( g_i g_j+g g_{ij}+g^3 (g_k^2 g_{ij}+ g_i g_jg_{kk}-g_i g_k g_{jk} -g_j g_k g_{ik} ) \big) , \text{ for } i \neq j ,\\
\end{split}
\end{equation}
and 
\begin{equation}
\begin{split}
b_i&=\frac{1}{2I^{5/2}} \Big( 4I ( g_i g_{jj}-g_j g_{ij}) +6g^5 g_i g_k g_l (g_{kl}g_{jj}-g_{jk}g_{jl}) \\
&\ \ \ +g^2 \big(- 6g_i  (g_k^2 g_{jj}-g_k g_j g_{kj}) -4I g g_k (g_{ik}g_{jj}-g_{jk}g_{ij}) -I g g_i (g_{kk}g_{jj}-g_{kj}^2)  \big)  \Big) .\\
\end{split}
\end{equation}

Also, we define
\begin{equation}
\begin{split}
c&=\frac{1}{2I^{5/2}} \big( I ( g_{ii}g_{jj}-g_{ij}^2)-6g^2 g_i g_k (g_{ik}g_{jj}-g_{jk}g_{ij}) - 6g g_i^2 ( g_j^2 g_{kk}- g_k g_j g_{kj}  ) \big) . \\
\end{split}
\end{equation}

\begin{lemma} \label{lem:g_m}
For \(1\leq m \leq n\), \((g_{m})_t\) is given by
\begin{equation} \label{eq:g_m}
\begin{split}
(g_{m})_t&=L[g_m] +c g_m .\\
\end{split}
\end{equation}
\end{lemma}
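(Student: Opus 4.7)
The strategy is to differentiate the evolution equation (\ref{eq:g_t}) for $g_t$ with respect to $x_m$, use the commutativity of partial derivatives to obtain $(g_m)_t$ on the left-hand side, and then reorganize the resulting terms on the right-hand side into the form $L[g_m] + cg_m$. Throughout I will denote by $\partial_m$ the partial derivative with respect to $x_m$, and repeatedly use $\partial_m g = g_m$, $\partial_m g_i = g_{im}$, $\partial_m g_{ij} = g_{ijm} = (g_m)_{ij}$, together with
\begin{equation*}
\partial_m I = 2 g g_m |\nabla g|^2 + 2 g^2 \sum_k g_k g_{km}.
\end{equation*}

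The right-hand side of (\ref{eq:g_t}) is a rational function in $g$, $g_i$, $g_{ij}$, and $I$. When $\partial_m$ acts on it, the resulting terms split naturally into three kinds according to the highest-order derivative of $g$ involved. First, terms where $\partial_m$ lands on a second derivative $g_{ij}$ produce third-order expressions of the form $\alpha_{ij}(\,\cdot\,)(g_m)_{ij}$; collecting these coefficients is precisely what produces the $a_{ij}$ in the statement. Second, terms where $\partial_m$ hits a first derivative $g_i$ yield expressions of the form $\beta_i(\,\cdot\,)(g_m)_i$, and after carefully combining contributions from the prefactor $I^{-1/2}$, $I^{-3/2}$ and the numerators, these collect into $b_i (g_m)_i$. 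Third, terms where $\partial_m$ acts on the $g$ and $g^2$ factors appearing explicitly (not as arguments of a derivative) produce a zeroth-order contribution $c g_m$. The explicit formulas for $a_{ij}$, $b_i$, and $c$ in the statement are obtained by grouping these three classes of terms and simplifying.

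The key bookkeeping step is the following. In equation (\ref{eq:g_t}), write
\begin{equation*}
g_t = \frac{P}{2\sqrt{I}} - \frac{g^2 Q_1}{2 I^{3/2}} - \frac{g^2 Q_2}{2 I^{3/2}},
\end{equation*}
where $P$, $Q_1$, $Q_2$ are the polynomial expressions in $g, g_i, g_{ij}$ displayed there. Applying $\partial_m$ to each summand by the quotient/product rule and using the identity quoted just before (\ref{eq:g_t}), the third-order terms that appear are exactly those obtained by differentiating $(g_{ii}g_{jj}-g_{ij}^2)$, $g_{ii}$, $g_{ij}$ inside $P, Q_1, Q_2$, and regrouping these produces the symmetric expression $a_{ij}(g_m)_{ij}$ with the stated $a_{ii}$ (which collects the $g_m$-coefficients coming from both the diagonal term in $P$ and the cross terms in $Q_1, Q_2$) and off-diagonal $a_{ij}$. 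The Hessian-gradient mixed terms combine into $b_i(g_m)_i$ after using the expansion of $\partial_m I$, and the pure $g_m$ coefficient $c$ is read off from differentiating the explicit $g$ and $g^2$ prefactors.

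The main obstacle is purely computational: it is a long, careful tracking of many cross-terms, and the plausible cancellations must be confirmed using the symmetric identity
\begin{equation*}
\sum_{i,j,k,l} g_i g_j g_k g_l\bigl((g_{ki}g_{lj}-g_{kj}g_{li})g+g_{ki}g_l g_j+g_k g_i g_{lj}-g_{kj}g_l g_i-g_k g_j g_{li}\bigr)=0,
\end{equation*}
so that the final grouping matches the stated $a_{ij}$, $b_i$, $c$ term by term. No conceptual input beyond straightforward differentiation and algebraic reorganization is required, and the result $(g_m)_t = L[g_m] + c g_m$ then follows immediately from the commutativity $\partial_t \partial_m g = \partial_m \partial_t g$.
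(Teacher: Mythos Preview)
Your proposal is correct and is precisely the approach the paper takes: the paper's own proof simply reads ``This equation can be obtained by a direct calculation,'' and your outline of differentiating (\ref{eq:g_t}) in $x_m$, commuting $\partial_t\partial_m$, and sorting the resulting terms by the highest-order derivative of $g_m$ is exactly that calculation spelled out.
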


\begin{proof}
This equation (\ref{eq:g_m}) can be obtained by a direct calculation.

\end{proof}

In the local coordinates where \(g_1=g_\nu\), \(g_i=0\) for \(i \neq 1\) and \(g_{ij}=0\) for \(i \neq j\), we have
\begin{equation}
\begin{split}
a_{11} &=\frac{1}{I^{3/2} } g \sum_{j \neq 1} g_{jj} ,\ \ a_{ij}= 0 , \text{ for } i \neq j ,\\
a_{ii} &=\frac{1}{I^{3/2} } \Big( g_1^2+g g_{11} +Ig \sum_{j \neq 1, i} g_{jj}  \Big) , \text{ for } i \neq 1, \\
\end{split}
\end{equation} 

\begin{equation}
\begin{split}
b_1&=\frac{1}{2I^{5/2}} \Big( -I g^3 g_1 R_{g,2}+\big(4I g_1  -6g^2 g_1 (g_1^2+g g_{11}) +2I g^3 g_1 g_{11}\big)\sum_{j \neq 1} g_{jj}   \Big) ,\\
b_i&=0, \text{ for } i \neq 0, \\
\end{split}
\end{equation}
and
\begin{equation}
\begin{split}
c&=\frac{1}{2I^{5/2}} \big( IR_{g,2} -6g g_1^2 (g_1^2 +g g_{11})\sum_{ j \neq 1 } g_{jj}  \big) , \\
\end{split}
\end{equation}
simplifying the evolution of  \(g_m\) as follows:

\begin{equation}
\begin{split}
&(g_{1})_t=\frac{1}{I^{3/2} } g \sum_{j \neq 1} g_{jj} g_{111} +\sum_{i \neq 1} \frac{1}{I^{3/2} } \Big( g_1^2+g g_{11} +Ig \sum_{j \neq 1, i} g_{jj}  \Big) g_{1ii} \\
&\ \ +\frac{1}{2I^{5/2}} \Big( -I g^3 g_1 R_{g,2}+\big(4I g_1  -6g^2 g_1 (g_1^2+g g_{11}) +2I g^3 g_1 g_{11}\big)\sum_{j \neq 1} g_{jj}   \Big) g_{11}\\
&\ \ +\frac{1}{2I^{5/2}} \big( IR_{g,2} -6g g_1^2 (g_1^2 +g g_{11})\sum_{ j \neq 1 } g_{jj}  \big) g_1  ,\\
\end{split}
\end{equation}
and for \(m \neq 1\)
\begin{equation}
\begin{split}
&(g_{m})_t=\frac{1}{I^{3/2} } g \sum_{j \neq 1} g_{jj} g_{m11} +\sum_{i \neq 1} \frac{1}{I^{3/2} } \Big( g_1^2+g g_{11} +Ig \sum_{j \neq 1, i} g_{jj}  \Big) g_{mii}   .\\
\end{split}
\end{equation}

\begin{lemma}
For \(1\leq m, p \leq n\), \((g_{mp})_t\) is given by
\begin{equation} \label{eq:g_mp}
\begin{split}
(g_{mp})_t&=\sum_{i,j=1}^n a_{ij}g_{mpij} +\sum_{i,j,k,l=1}^n b_{mp,ij,kl}g_{mij}g_{pkl} +\sum_{i,j,k=1}^n c_{mp,ijk}g_{ijk}+d_{mp} , \\
\end{split}
\end{equation}
with the fourth derivatives

\begin{align*}
\sum_{i,j=1}^n a_{ij}g_{mpij}&=\frac{1}{I^{3/2}} \sum_{i,j=1}^n ( Ig g_{jj}-g^3 g_j \sum_{k=1}^n g_k g_{jk} +g_j^2 )g_{mpii} \\
&\ -\frac{1}{I^{3/2}} \sum_{i,j=1}^n ( I g g_{ij}+ g^3 g_i g_j \sum_{k=1}^n g_{kk} -2g^3  g_i \sum_{k=1}^n g_k g_{jk} +g_i g_j ) g_{mpij}  , \\
\end{align*}
the terms which are quadratic in third derivatives

\begin{align*}
\sum_{i,j,k,l=1}^n b_{mp,ij,kl}g_{mij}g_{pkl}&=\frac{1}{I^{3/2}}  \sum_{i=1}^n g_{mii} ( Ig \sum_{k=1}^n g_{pkk}-g^3 \sum_{k,l=1}^n g_k g_l g_{pkl}) -\frac{1}{I^{3/2}} I g \sum_{i,j=1}^n g_{mij} g_{pij} \\
&\ +\frac{1}{I^{3/2}} g^3 \sum_{i,j,k=1}^n g_k  g_{mij} (g_j g_{pik}+g_i g_{pjk}) -\frac{1}{I^{3/2}} g^3 \sum_{i,j=1}^n g_i g_j g_{mij} \sum_{k=1}^n g_{pkk} , 
\end{align*}
the terms which are linear in third derivatives

\begin{align*}
&\sum_{i,j,k=1}^n c_{mp,ijk}g_{ijk}=\frac{1}{2I^{5/2}} \sum_{i=1}^n \Big( 4I\sum_{j=1}^n (g_i g_{jj} -g_j  g_{ij} ) -Ig^2 \sum_{j,k=1}^n \big( gg_i  (g_{kk}g_{jj}-g_{kj}^2) +4gg_k (g_{ik}g_{jj}-g_{jk}g_{ij})\big) \\
&\ \ -\sum_{j,k=1}^n g^2(6g_i g_k^2 g_{jj}-6g_i g_j g_k g_{jk} ) +6g^5 \sum_{j,k,l=1}^n ( g_i g_l g_k g_{lk}g_{jj}-g_i g_l g_k g_{jl}g_{jk})  \Big) g_{mpi} \\
&\ -\frac{3}{I^{5/2}} g^2 \sum_{j=1}^n ( Ig g_{jj} +g_j^2 -g^3 g_j \sum_{k=1}^n g_k g_{jk} ) \sum_{l=1}^n g_l g_{pl} \sum_{i=1}^n g_{mii}  -\frac{2}{I^{3/2}} g^3 \sum_{i,j=1}^n g_{ij} \sum_{k=1}^n g_k g_{pk} g_{mij} \\
&\ +\frac{1}{I^{3/2}} \sum_{j=1}^n \big( 2g_j g_{pj} +\sum_{k=1}^n (2g^3 g_k g_{jj} g_{pk} -g^3 g_j g_{jk} g_{pk} -g^3 g_k g_{jk}g_{pj} ) \big) \sum_{i=1}^n g_{mii} \\
&\ +\frac{3}{I^{5/2}} g^2 \sum_{i,j=1}^n \big( I g g_{ij} +g_i g_j + g^3 \sum_{k=1}^n (g_i g_j g_{kk} -g_i g_k g_{jk} -g_j g_k g_{ik} \big) \sum_{l=1}^n g_l g_{pl} g_{mij}  -\frac{1}{I^{3/2}} \sum_{i,j=1}^n (g_j g_{pi}+g_i g_{pj}) g_{mij}\\
&\ -\frac{1}{I^{3/2}} g^3 \sum_{k=1}^n g_{kk} \sum_{i,j=1}^n (g_j g_{pi}+g_i g_{pj})g_{mij} +\frac{1}{I^{3/2}} g^3 \sum_{i,j,k=1}^n\big((g_j g_{ik}+g_i g_{jk}) g_{pk} +g_k (g_{jk} g_{pi}+g_{ik} g_{pj})\big) g_{mij}  \\
&\ -\frac{3}{I^{5/2}} g\sum_{i,j=1}^n ( Ig g_{jj} +g_j^2 -g^3 g_j \sum_{k=1}^n g_k g_{jk} ) \sum_{l=1}^n g_l^2 g_p  g_{mii} +\frac{1}{I^{3/2}} \sum_{i,j=1}^n \big( Ig_{jj} +g^2 \sum_{k=1}^n ( 2g_k^2 g_{jj} -3 g_j g_k g_{jk}) \big) g_p  g_{mii} \\
&\ +\frac{3}{I^{5/2}} g \sum_{i,j=1}^n \big( I g g_{ij} +g_i g_j +g^3 \sum_{k=1}^n (g_i g_j g_{kk} - g_k g_j g_{ik} -g_k g_i g_{jk}) \big) \sum_{l=1}^n g_l^2 g_p g_{mij}  \\
&\ -\frac{1}{I^{3/2}} (2g^2 \sum_{k=1}^n g_k^2  +I) \sum_{i,j=1}^n g_{ij} g_p g_{mij}-\frac{3}{I^{3/2}} g^2 \sum_{i,j,k=1}^n g_i g_j g_{kk}g_p g_{mij} +\frac{3}{I^{3/2}} g^2 \sum_{i,j,k=1}^n g_k  (g_j g_{ik}+g_i g_{jk}) g_p g_{mij} \\
&\ +\frac{1}{2I^{5/2}} \sum_{i,j=1}^n g_{mi}\big( 4I(g_i g_{pjj}-g_j  g_{pij})  -2Ig^3 g_i \sum_{k=1}^n (g_{kk}g_{pjj}-g_{kj}g_{pkj}) \\
&\ \ -4Ig^2 g \sum_{k=1}^n g_k (g_{ik}g_{pjj}+g_{jj}g_{pik}-g_{jk}g_{pij}-g_{ij}g_{pjk}) -6g^2 g_i \sum_{k=1}^n ( g_k^2 g_{pjj} -g_j g_k g_{pjk}) \\
&\ \ +6g^5 g_i \sum_{k,l=1}^n g_k g_l ( g_{lk}g_{pjj} +g_{jj}g_{plk} -g_{jl}g_{pjk} -g_{jk}g_{pjl}) \big) \\
&\ +\frac{1}{I^{5/2}}g_m \sum_{i,j=1}^n \big(I (g_{jj}g_{pii}-g_{ij}g_{pij}) -3g^2g_i \sum_{k=1}^n  g_k (g_{ik}g_{pjj}-g_{jk}g_{pij} +g_{jj}g_{pik} -g_{ij}g_{pjk}) \\
&\ \ -3g \sum_{k=1}^n g_k^2 (g_i^2 g_{pjj} -g_j g_i g_{pij}) \big) , \\
\end{align*}
and the terms involving only first-order and second-order derivatives

\begin{align*}
&d_{mp} =\frac{1}{2I^{5/2}}\Big(I \sum_{i,j=1}^n (g_{ii}g_{jj}-g_{ij}^2)-6\big(g^2 \sum_{i,j,k=1}^n g_i g_k (g_{ik}g_{jj}-g_{jk}g_{ij}) +g  g_k^2 (g_i^2 g_{jj}- g_j g_i g_{ij}) \big) \Big)g_{mp}  \\
&\ +\frac{1}{2I^{5/2}} \sum_{i=1}^n g_{mi}\Big(4I\sum_{j=1}^n (g_{jj}g_{pi} -g_{ij}g_{pj}) -Ig^2 \sum_{j,k=1}^n \big( (g_{kk}g_{jj}-g_{kj}^2) gg_{pi} +4(g_{ik}g_{jj}-g_{jk}g_{ij})gg_{pk}\big) \\
&\ \ -2g^4 \sum_{j,k=1}^n \big( gg_i (g_{kk}g_{jj}-g_{kj}^2 ) +4gg_k (g_{ik}g_{jj}-g_{jk}g_{ij})\big) \sum_{l=1}^n g_l g_{pl} \\
&\ \ +6g^5 \sum_{j,k,l=1}^n ( g_{lk}g_{jj}- g_{jl}g_{jk}) (g_i g_l g_{pk}+g_i g_k g_{pl} +g_l g_k g_{pi} ) \\
&\ \ +g^2 \big(-6\sum_{j,k=1}^n ( g_k^2 g_{jj}-g_j g_k g_{jk} ) g_{pi} +8(g_i g_{jj} -g_j  g_{ij} )g_k g_{pk}-12g_i(g_j g_{kk}g_{pj} -g_k g_{jk} g_{pj} ) \big) \Big) \\
&\ -\frac{5}{2I^{7/2}} g^2 \sum_{i=1}^n g_{mi}\sum_{q=1}^n g_q g_{pq} \Big( 4I\sum_{j=1}^n (g_i g_{jj} -g_j  g_{ij} ) -Ig^2 \sum_{j,k=1}^n \big( gg_i (g_{kk}g_{jj}-g_{kj}^2) +4gg_k (g_{ik}g_{jj}-g_{jk}g_{ij})\big)  \\
&\ \ -6g^2g_i \sum_{j,k=1}^n (g_k^2 g_{jj} -g_j g_k g_{jk} ) +6g^5 g_i \sum_{j,k,l=1}^n ( g_l g_k g_{lk}g_{jj} -g_l g_k g_{jl}g_{jk}) \Big) \\
&\ +\frac{1}{2I^{5/2}}\sum_{i=1}^n g_{mi} g_p \sum_{j,k=1}^n \Big(8gg_k^2 (g_i g_{jj} -g_j  g_{ij} ) -2g\big( gg_i (g_{kk}g_{jj}-g_{kj}^2) +4gg_k (g_{ik}g_{jj}-g_{jk}g_{ij})\big)(g^2 \sum_{l=1}^n g_l^2+I) \\
&\ \ -Ig^2 g_i (g_{kk}g_{jj}-g_{kj}^2) -4Ig^2 g_k (g_{ik}g_{jj}-g_{jk}g_{ij}) -12g g_i (g_k^2 g_{jj} -g_j g_k g_{jk} ) \\
&\ \ +30g^4 g_i \sum_{l=1}^n (g_l g_k g_{lk}g_{jj} -g_l g_k g_{jl}g_{jk}) \Big)\\
&\ -\frac{5}{2I^{7/2}} g \sum_{q=1}^n g_q^2 \sum_{i=1}^n g_{mi} g_p \Big( 4I\sum_{j=1}^n (g_i g_{jj} -g_j  g_{ij} ) -Ig^2 \sum_{j,k=1}^n \big( gg_i (g_{kk}g_{jj}-g_{kj}^2) +4gg_k (g_{ik}g_{jj}-g_{jk}g_{ij})\big) \\
&\ \ -6g^2g_i  \sum_{j,k=1}^n (g_k^2 g_{jj} -g_j g_k g_{jk} ) +6g^5 g_i  \sum_{j,k,l=1}^n (g_l g_k g_{lk}g_{jj} -g_l g_k g_{jl}g_{jk}) \Big) \\
&\ +\frac{1}{I^{5/2}}g_m \sum_{i,j,k=1}^n \big( -3g^2 (g_{ik}g_{jj}-g_{jk}g_{ij})( g_i g_{pk} +g_k g_{pi}) +g^2 (g_{ii}g_{jj}-g_{ij}^2) g_k g_{pk}  \\
&\ \ -6g(g_i^2 g_{jj}- g_j g_i g_{ij}) g_k g_{pk} -3gg_k^2 (2g_i g_{jj}g_{pi}-g_j g_{ij} g_{pi} -g_i g_{ij}g_{pj} ) \big) \\
&\ -\frac{5}{2I^{7/2}} g^2 g_m \sum_{l=1}^n g_l g_{pl}\Big(I \sum_{i,j=1}^n (g_{ii}g_{jj}-g_{ij}^2) -6\big( g^2 \sum_{i,j,k=1}^ng_i g_k (g_{ik}g_{jj}-g_{jk}g_{ij}) +gg_k^2 (g_i^2 g_{jj}- g_j g_i g_{ij}) \big) \Big)   \\
&\ +\frac{1}{I^{5/2}}g_m g_p \sum_{i,j,k=1}^n \big( -6g g_i g_k(g_{ik}g_{jj}-g_{jk}g_{ij}) +g g_k^2(g_{ii}g_{jj}-g_{ij}^2) -3g_k^2 (g_i^2 g_{jj}- g_j g_i g_{ij})\big) \\
&\ -\frac{5}{2I^{7/2}} g \sum_{l=1}^n g_l^2 g_m g_p \Big(I \sum_{i,j=1}^n (g_{ii}g_{jj}-g_{ij}^2)-6\sum_{i,j,k=1}^n \big( g^2g_i g_k (g_{ik}g_{jj}-g_{jk}g_{ij}) +gg_k^2 (g_i^2 g_{jj}- g_j g_i g_{ij}) \big) \Big)  . \\
\end{align*}

\end{lemma}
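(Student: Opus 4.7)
The plan is to derive the evolution equation for $g_{mp}$ by differentiating the equation \eqref{eq:g_m} for $(g_m)_t$ once more with respect to $x_p$, then sorting the resulting terms according to their order of derivatives. Since Lemma \ref{lem:g_m} was itself obtained by a direct calculation from \eqref{eq:g_t}, the only new work is to propagate one additional spatial derivative through the already-known expression. By commuting time and space derivatives we have $(g_{mp})_t = \partial_p(g_m)_t = \partial_p\bigl(L[g_m] + c\,g_m\bigr)$, and expanding with the product rule gives
\begin{equation*}
(g_{mp})_t = \sum_{i,j} a_{ij}\,g_{mpij} + \sum_{i,j}(\partial_p a_{ij})\,g_{mij} + \sum_i b_i\,g_{mpi} + \sum_i(\partial_p b_i)\,g_{mi} + c\,g_{mp} + (\partial_p c)\,g_m.
\end{equation*}

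The fourth-order block $\sum a_{ij} g_{mpij}$ is already in the form claimed, up to substituting the definitions of $a_{ij}$ into the stated expression; matching terms is essentially bookkeeping. The third-order contributions split naturally into the two claimed pieces. On one hand, the coefficients $a_{ij}$, $b_i$ depend on $g$, $\nabla g$, and $D^2 g$; a derivative $\partial_p$ hitting the $D^2 g$ slot produces a factor $g_{pkl}$, so $(\partial_p a_{ij})\,g_{mij}$ contains exactly the products $g_{pkl}\,g_{mij}$ that fill the quadratic-in-third-derivatives block $\sum b_{mp,ij,kl}\,g_{mij}g_{pkl}$. On the other hand, whenever $\partial_p$ hits a $g$ or $g_k$ entry inside $a_{ij}$, $b_i$, $c$, it produces a factor $g_p$ or $g_{kp}$; multiplied by the third derivative present in $g_{mij}$ (inside the $L$ operator) or $g_{mpi}$ (from $b_i g_{mpi}$), one gets the single-third-derivative terms collected in $\sum c_{mp,ijk}\,g_{ijk}$. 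Everything else — where no factor of a third derivative survives — is lumped into $d_{mp}$, and the $c g_{mp}$ contribution from the original lower-order term appears in the first line of $d_{mp}$.

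Concretely I would proceed term-by-term: first write $a_{ij}$, $b_i$, $c$ as polynomial expressions in $g,g_k,g_{kl}$ divided by powers of $I^{1/2}$; then apply $\partial_p$ using $\partial_p I = 2g g_p |\nabla g|^2 + 2g^2 \sum_k g_k g_{kp}$ and the quotient rule; and finally separate each resulting product according to whether it contains two third derivatives, one third derivative, or none. The derivative of the denominator $I^{5/2}$ accounts for the $5/(2I^{7/2})$ prefactors visible in $d_{mp}$, confirming that my organization matches the stated formula. Symmetry in $(m,p)$ must be respected at the end, which provides a useful internal check: the expression for $d_{mp}$ is visibly symmetric in $(m,p)$ up to the labeling of the summation indices $i,j,k$.

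The main obstacle is purely bookkeeping: there is no conceptual difficulty, but the expression for $d_{mp}$ has on the order of twenty monomials in $g,\nabla g, D^2 g$ and one must verify each prefactor and sign. I would manage this by (i) grouping terms by the total power of $g$ appearing (this gives blocks of degree $0,1,2,3,4,5$), (ii) within each block grouping by the number of gradient factors, and (iii) cross-checking against the two-dimensional Gauss curvature case, where \eqref{eq:g_t} collapses to a Monge–Ampère-type equation and the result should reproduce the formulas of Daskalopoulos–Lee \cite{daskalopoulos-lee04}. Once these consistency checks pass, the identity follows by direct algebraic computation.
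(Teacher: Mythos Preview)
Your approach is correct and essentially identical to the paper's: the paper's proof consists of the single observation that differentiating $(g_m)_t = L[g_m] + c\,g_m$ in $x_p$ yields $(g_{mp})_t = L[g_{mp}] + c\,g_{mp} + \sum_{i,j}\partial_p(a_{ij})g_{mij} + \sum_i \partial_p(b_i)g_{mi} + \partial_p(c)g_m$, followed by the sentence ``Then the equation is obtained by direct calculations.'' Your proposal reproduces this decomposition verbatim and adds organizational detail (grouping by powers of $g$, symmetry checks, the two-dimensional comparison) that the paper omits, so if anything you have outlined the bookkeeping more explicitly than the original.
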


\begin{proof}
Using Lemma \ref{lem:g_m} about the evolution of the first-order derivatives of \(g\), we have
\begin{equation}
(g_{mp})_t=L[g_{mp}] +c g_{mp} +\sum_{i,j=1}^n \partial_p(a_{ij})g_{mij} +\sum_{i=1}^n \partial_p(b_i)g_{mi}+\partial_p(c)g_m . \\
\end{equation} Then the equation (\ref{eq:g_mp}) is obtained by direct calculations.
\end{proof}

\subsection{Gradient estimates}
Now, we get estimates of the gradient of the pressure-like function \(g=\sqrt{2f}\) from above and below. To begin with, we assume that 
\begin{itemize}
\item the measure of \(\Sigma_0(T)\) is positive for \(0<T<T_c\) for some critical time \(T_c\), 
\item \(g\) is smooth up to the interface for a finite amount of time, 
\item \(g\) is of class \(C^{2+\beta}\) for some \(0<1< \beta\) for time \(0 \leq t \leq T < T_c\), 
\item \(g\) has first-order and second-order derivatives having the non-degeneracy condition at the initial time \(t=0\): there is a positive number \(\lambda\) such that the derivative \(\nabla g(x) \) and the second-order tangential derivatives \( \nabla^2_{\tau \tau} g(x) \) satistify \(
|\nabla g(x)| \geq \lambda \text{ and }  |\nabla^2_{\tau \tau} g(x)| \geq \lambda, \forall x \in \Gamma, \) 
\item \(f\) satisfies the non-degeneracy condition \(\max_{x\in \Omega(t)} f( x , t) \geq 2\) on \(0 \leq t \leq T < T_c \) so that g also meets the condition \( \max_{x\in \Omega(t)} g( x , t) \geq 2 \) in \( \Omega(t)=\{ x \in \mathbb{R}^n ; |\nabla f(x,t)|<\infty\} \). 
\end{itemize}
We approximate \(f\) by a decreasing sequence of positive, strictly convex and smooth functions \(f_\epsilon\) solving the equation (\ref{f_t}) of scalar curvature flow. We set \(g_\epsilon=\sqrt{2 f_\epsilon}\) and denote it simply by \(g\) afterwards. What we want to obtain is a gradient estimate from above.

\begin{lemma} \label{upperbound:Dg}
If the level set \(\Gamma_\epsilon(t)\) of \(g\) is convex, then there exists a constant \(C>0\) such that
\begin{equation}
|\nabla g|\leq C, \   \text{on} \ 0\leq g(\cdot,t) \leq 1, \ 0\leq t \leq T.\\
\end{equation}
\end{lemma}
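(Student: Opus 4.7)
The statement is an upper bound on $|\nabla g|$ on the parabolic cylinder $\mathcal{N}_T := \{(x,t): 0 \leq g(x,t) \leq 1,\ 0 \leq t \leq T\}$. My plan is a Bernstein-type maximum principle argument applied to $w := |\nabla g|^2$, combined with boundary estimates derived from the uniform $C^{1,1}$ regularity of $f$ established in Section \ref{sec:speed}.

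\textbf{Step 1 (evolution equation).} Starting from the identity $(g_m)_t = L[g_m] + c\, g_m$ of Lemma \ref{lem:g_m}, I differentiate $w = \sum_m g_m^2$ to get
\begin{equation*}
w_t \;=\; L[w] \;-\; 2 \sum_{m=1}^n \sum_{i,j=1}^n a_{ij}\, g_{mi}\, g_{mj} \;+\; 2c\, w.
\end{equation*}
The convexity hypothesis on the level set $\Gamma_\epsilon(t)$ enters here: in the local frame where $g_1=g_\nu$ and $g_i=0$ for $i \neq 1$, it forces $g_{jj} \leq 0$ for $j \neq 1$ (with the appropriate sign convention), so the simplified expressions for the coefficients $a_{ij}$ given after Lemma \ref{lem:g_m} form a positive semi-definite matrix, and the quadratic drain term is non-positive. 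Hence $w$ is a subsolution $w_t \leq L[w] + 2c\, w$, where $c$ is controlled in terms of $R_{g,2}/I^{5/2}$, which is itself comparable to the scalar curvature $\sigma_2$ of the flow; the uniform $C^{1,1}$ estimate of $f$ and the convexity of the body bound all principal curvatures, hence $\|c\|_{L^\infty(\mathcal{N}_T)} < \infty$.

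\textbf{Step 2 (boundary estimates).} On the parabolic boundary $\partial_p \mathcal{N}_T$ I argue as follows.
\emph{(i)} At $t=0$ the bound is immediate from the initial smoothness and non-degeneracy assumption on $g$.
\emph{(ii)} On the outer face $\{g=1\}$: the $C^{1,1}$ estimate of Section \ref{sec:speed} gives $|\nabla f| \leq M$, and the identity $\nabla g = \nabla f / g$ yields $|\nabla g| \leq M$ since $g=1$ there.
\emph{(iii)} On the interface $\{g=0\} = \Gamma(t)$: the same $C^{1,1}$ estimate, combined with $f|_\Gamma = 0$ and $\nabla f|_\Gamma = 0$, gives $f(x,t) \leq \tfrac{M}{2}\, d(x,\Gamma(t))^2$ near $\Gamma(t)$, so that $g(x,t) \leq \sqrt{M}\, d(x,\Gamma(t))$ and hence $|\nabla g| \leq \sqrt{M}$ up to $\Gamma(t)$.

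\textbf{Step 3 (conclusion).} Applying the weak maximum principle to the subsolution $w$ with the boundary control of Step 2 produces
\begin{equation*}
\sup_{\mathcal{N}_T} |\nabla g|^2 \;\leq\; e^{2\|c\|_\infty T} \sup_{\partial_p \mathcal{N}_T} |\nabla g|^2 \;\leq\; C,
\end{equation*}
with $C$ depending only on the initial data, the $C^{1,1}$ constant $M$, and $T$. Passing to the limit in the smooth approximating sequence $g_\epsilon$ preserves the bound, giving the stated estimate for $g$.

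\textbf{Anticipated difficulty.} The main delicate point is the behavior of the coefficients $a_{ij}$ and $c$ near the degenerate interface, where the equation loses parabolicity. A priori both can be singular through $R_{g,2}$. The convexity assumption on the level sets is precisely what rules this out: it fixes the sign of the tangential Hessian so that $(a_{ij})$ is positive semi-definite and the quadratic term genuinely drains $w$, and it keeps $R_{g,2}$ bounded in a manner comparable to the bounded scalar curvature of the flow. Without this convexity hypothesis, neither the sign of the drain term nor the boundedness of $c$ would be ensured, and the Bernstein argument would break down at the interface.
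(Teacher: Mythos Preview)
Your Bernstein set-up $w_t = L[w] - 2\sum_{m,i,j} a_{ij}g_{mi}g_{mj} + 2cw$ is correct, and the observation that $(a_{ij})$ is positive semidefinite under the level-set convexity is also fine (though note the sign: convexity of $\Gamma_\epsilon$ gives $g_{jj}\geq 0$ for tangential $j$, not $\leq 0$). The genuine gap is your bound on $c$. You assert $\|c\|_{L^\infty(\mathcal{N}_T)}<\infty$ because ``$R_{g,2}$ is comparable to $\sigma_2$'' and the $C^{1,1}$ estimate of $f$ bounds the curvatures. This is false: $R_{g,2}$ is built from second derivatives of $g$, not $f$, and since $g_{ij}=(f_{ij}-g_ig_j)/g$ these can blow up like $1/g$ near the interface. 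Indeed, the relation $R_{f,2}=g^2 R_{g,2}+2gg_1^2\sum_{j\neq 1}g_{jj}$ shows that a bound on $R_{f,2}$ (which is what $C^{1,1}$ gives) does not prevent $R_{g,2}\sim 1/g$. The boundedness of $\sum_{j\neq 1}g_{jj}$ and of $R_{g,2}$ from below is proved only later in Section~\ref{sec:derivative} and \emph{uses} the gradient bound you are trying to establish, so invoking it here is circular.

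The paper avoids this by a different mechanism. It works at the spatial interior maximum $P_0$ of $X=\tfrac12|\nabla g|^2$, where the first-order condition forces $g_{11}=g_{1i}=0$; this collapses the zero-order contribution to $\tfrac{g_1^4}{2I^{3/2}}H_\epsilon^2$ with $H_\epsilon=g_1^{-1}\sum_{i\geq 2}g_{ii}$ the mean curvature of the level set. The crucial input is then the \emph{finite-speed estimate} of Section~\ref{sec:speed}: from $C_1\leq g_t/g_1\leq C_2$ on $\Gamma_\epsilon$ and the explicit form of $g_t$ at $P_0$, one reads off $\tfrac{g_1^2}{I^{3/2}}H_\epsilon\leq C_2$, whence $X_t\leq \tfrac12 C_2^2 I^{3/2}$. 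An ODE comparison (splitting on whether $g^2g_1^2\leq 1$) then closes the argument. So the missing idea in your proposal is not a curvature bound but the use of the interface-speed bound to control $H_\epsilon$ at the maximum point; without it, no a priori control on $c$ (or on $2cw$) is available at this stage.
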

\begin{proof}
Let us set \(X=\frac{1}{2}|\nabla g|^2=\frac{1}{2}\sum_i g_i^2\). Then \(X=\frac{1}{2}g^2_\nu \) for the normal vector \( \nu \) to the level set of \(g\). Suppose that at each time \(0\leq t <T\), \(X\) attains an interior maximum at \(P_0=(x_0, t) \) so that \(X(x_0,t)=\sup \{X(x,t);\ x\in \Sigma,\ 0\leq g(x,t)\leq 1\}\). Rotating the coordinates, we can make \(g_1=g_\nu>0\) and \(g_i=0\) for \(2\leq i \leq n\) at \(P_0\). As
\begin{equation} \begin{split}
X_1&=g_1 g_{11}+\sum_{i\geq 2} g_i g_{i1}=g_1 g_{11}=0, \  X_i=g_1 g_{1i}+\sum_{j\geq 2} g_j g_{ji}=g_1 g_{1i}=0, \ i \geq 2,
\end{split} \end{equation} we have \(g_{11}=g_{1i}=0\) for \(2\leq i \leq n\) at \(P_0\). So the nonzero second-order derivatives of \(g\) at \(P_0\) are \(g_{ij}\) for \(2\leq i,j \leq n\). In particular, \(g_{ii} \geq 0, \ 2\leq i \leq n\) by convexity of level sets of \(g\). \\

Now, let us look at the second-order derivatives of \(X\). At \(P_0\), we have
\begin{equation} \begin{split}
X_{11}&=g_1 g_{111}+g^2_{11}+\sum_{i\geq2}(g_i g_{i11}+g^2_{i1})=g_1 g_{111} \leq 0, \\
X_{ii}&=g_1 g_{1ii}+g^2_{1i}+\sum_{j\geq2}(g_j g_{jii}+g^2_{ji})=g_1 g_{1ii}+\sum_{j\geq2} g^2_{ji} \leq 0,  \ 2\leq i \leq n,  \\
X_{ij}&=g_1 g_{1ij}+g_{1i}g_{1j}+\sum_{k\geq2}(g_k g_{kij}+g_{ki}g_{kj})=g_1 g_{1ij}+\sum_{k\geq2} g_{ki}g_{kj} \leq 0,  \ 1\leq i,j \leq n,  \\
\end{split} \end{equation} so that \(g_1 g_{111}\leq 0\) and \(g_1 g_{1ii}\leq 0\). \\

Hence, the evolution of \(X\) at \(P_0\) is given by
\begin{equation} \begin{split}
X_t&=\bigg( \frac{g}{\sqrt{I}} \sum_{i=2}^n g_{ii} +\frac{g}{I^{3/2}} \sum_{i=2}^n g_{ii}+\frac{g_1^2}{I^{3/2}} \bigg) g_1 g_{111} \\ 
&\ \  +\frac{g_1^2}{I^{3/2}} \sum_{i=2}^n g_1 g_{1ii} +\frac{g}{\sqrt{I}} \sum_{i,j=2}^n g_1 g_{jj}g_{1ii} -\frac{g}{\sqrt{I}} \sum_{i,j=2}^n g_{ij} g_{1ij} \\ 
&\ \ -\frac{3g g_1^6}{ I^{5/2}} \sum_{i=2}^n g_{ii} +\frac{g_1^2}{2I^{3/2}} \sum_{i,j=2}^n (g_{ii} g_{jj} -g_{ij}^2)  \\
\end{split}\end{equation}
which is written as
\begin{equation}\begin{split}
X_t &=\bigg(\frac{g g_1 H_\epsilon}{I^{3/2}} +\frac{g g_1 H_\epsilon}{\sqrt{I}}+\frac{g^2_1}{I^{3/2}}\bigg) X_{11} + \sum_{i=2}^n \bigg(\frac{g g_1 H_\epsilon}{\sqrt{I}}+\frac{g^2_1}{I^{3/2}}\bigg) X_{ii} -\sum_{i,j=2}^n \frac{g g_{ij}}{\sqrt{I}} X_{ij} \\
&\ +\frac{-3g g_1^6}{I^{5/2}} g_1 H_\epsilon +\frac{1}{2 I^{3/2}} g_1^2(g_1^2 H_\epsilon^2-\sum_{i,j=2}^n g_{ij}^2)  \\
&\ - \sum_{i=2}^n \bigg(\frac{g g_1 H_\epsilon}{\sqrt{I}}+\frac{g^2_1}{I^{3/2}}\bigg)\sum_{j\geq2} g^2_{ji} +\sum_{i,j=2}^n \frac{g g_{ij}}{\sqrt{I}} \sum_{k\geq2} g_{ki}g_{kj}  
\end{split} \end{equation} where \(H_\epsilon=\frac{1}{g_1}\sum_{i=2}^n g_{ii} \) is the mean curvature of the level set \(\Gamma_\epsilon(t)\) at \(P_0\). \\

The level set \(\Gamma_\epsilon(t)\) of the convex function \(f\) is also convex. We can use local coordinates at \(P_0\) such that \(g_{ij}=0\) for \(i \neq j \) with \(g_1=g_\nu>0\) and \(g_i=0\) for \(2\leq i \leq n\). Then, since \(X_i=0\) and \(X_{ii} \leq 0\) for all \(1\leq i \leq n\) at \(P_0\),
\begin{equation} 
\begin{split}
X_t&=a_{11} X_{11}+ \sum_{i = 2}^n a_{ii} X_{ii} + \frac{g_1^2 }{2 I^{3/2}}(g_1^2 H_\epsilon^2-\sum_{i=2}^n g_{ii}^2) +\frac{-3g g_1^6}{I^{5/2}} g_1H_\epsilon -\sum_{i=2}^n \frac{g^2_1}{I^{3/2}} g_{ii}^2  \leq \frac{g_1^4}{2 I^{3/2}}H_\epsilon^2
\end{split}
\end{equation} 
where \(a_{11}=\frac{g g_1 H_\epsilon}{I^{3/2}} +\frac{g g_1 H_\epsilon}{\sqrt{I}}+\frac{g^2_1}{I^{3/2}} \geq 0\) and \(a_{ii}=\frac{g }{\sqrt{I}}\sum_{j \neq 1,i}  g_{jj}+\frac{g^2_1}{I^{3/2}} \geq 0\) for \(2 \leq i \leq n\). \\

In the previous sections, we proved that the speed of the level set is non-degenerate and finite:
\begin{equation}
\gamma(\theta, t_0) e^{-\frac{t-t_0}{B t_0}} \leq \gamma(\theta, t) \leq  \gamma(\theta, t_0) e^{-\frac{A(t-t_0)}{D+|B|T}}
\end{equation} which implies that
\begin{equation} \begin{split}
&\frac{\gamma(\theta, t_0) (e^{-\frac{t-t_0}{B t_0}}-1)}{t-t_0} \leq \frac{\gamma(t)-\gamma(t_0)}{t-t_0} \leq \frac{\gamma(t_0) (e^{-\frac{A(t-t_0)}{D+|B|T}}-1)}{t-t_0} , \\
&\gamma(t_0) \frac{-1}{|B|t_0} \leq \gamma'(t_0) \leq \gamma(t_0) \frac{-A}{D+|B|T}.
\end{split} \end{equation}

On the other hand, on the level set \(\Gamma_\epsilon(t)\),
\begin{equation}\begin{split}
&g_r \gamma'(t)+g_t=0, \ \ g_1=g_r D_{x_1} r=g_r \frac{x_1}{r} \Longrightarrow \ \ \frac{g_1}{g_t}= -\frac{x_1}{r}\frac{1}{\gamma'(t)}\ , \\ 
&0<C_1= \frac{|B|t_0}{R^2} \rho_0\leq \frac{|B|t}{\gamma^2 (x_0)} x_1 \leq \frac{g_1}{g_t} \leq \frac{D+|B|T}{A\gamma^2 (x_0)} x_1 \leq \frac{D+|B|T}{A\rho_0^2}R=C_2
\end{split}\end{equation} since the initial flat side \(\Sigma_0(0)\) is contained in the ball \(B_R\).  So we can say that \(\frac{g_1}{g_t}\) is of order 1 and we write \(\frac{g_1}{g_t} \sim 1\). \\

Since the level set \(\Gamma_\epsilon(t)\) is convex by assumption, it holds that at \(P_0\)
\begin{equation}
\sum_{i,j=2}^n (g_{ii} g_{jj}-g_{ii}^2)=2\sum_{2\leq i <j \leq n} g_{ii} g_{jj} \geq 0
\end{equation} and
\begin{equation} \label{ineq:bdryspeed}
C_1\leq \frac{g_t}{g_1}=\frac{1}{2 g_1 \sqrt{I}}g\sum_{i,j=2}^n (g_{ii} g_{jj}-g_{ij}^2)+\frac{1}{g_1 I^{3/2} } \sum_{i=2}^n \big( g(g_{ii} g_{11}-g_{i1}^2)+g_1^2 g_{ii}\big) \leq C_2.
\end{equation}

At \(P_0\), \(g_{11}=g_{i1}=0\) so we have
\begin{equation}
C_1 \leq \frac{g_t}{g_1} =\frac{g}{2g_1\sqrt{I}} \sum_{i,j=2}^n (g_{ii} g_{jj}-g_{ii}^2)+ \frac{g_1^2}{I^{3/2}} H_\epsilon \leq C_2,
\end{equation} and
\begin{equation} 
X_t \leq \frac{g_1^4}{2 I^{3/2}}H_\epsilon^2=\frac{I^{3/2}}{2} \bigg(\frac{g_1^2}{I^{3/2}} H_\epsilon\bigg)^2 \leq \frac{1}{2} C_2^2 I^{3/2}=\frac{1}{2} C_2^2 (1+g^2 g_1^2)^{3/2}.
\end{equation}

If \(g^2g_1^2 \leq 1\), then
\begin{equation}
X_t \leq \sqrt{2} C_2^2
\end{equation} and
\begin{equation}
X(t) \leq X(0)+t\sqrt{2} C_2^2 \leq X(0)+T\sqrt{2} C_2^2 .
\end{equation}

Otherwise, if \(g^2g_1^2 \geq 1\), then at \(P_0\) on \(\Gamma_\epsilon(t)\)
\begin{equation}
X_t \leq \sqrt{2}C_2^2 g^3 g_1^3=\sqrt{2}C_2^2 \epsilon^3 (2X)^{3/2}=4C_2^2 \epsilon^3 X^{3/2}.
\end{equation}

Hence, \(X(t)\) is bounded above by the solution \(Y(t)\) to the ODE \(Y_t=4C_2^2 \epsilon^3 Y^{3/2}\), \(Y(0)=X(0)\):
\begin{equation}
X(t) \leq  Y(t)=\frac{1}{(X(0)-2C_2^2 \epsilon^3 t)^2} \leq Y(T)=\frac{1}{(X(0)-2C_2^2 \epsilon^3 T)^2}
\end{equation} for time \(0\leq t \leq T\). \\

Taking \(\epsilon>0\) sufficiently small that \(2C_2^2 \epsilon^3 T \leq \frac{1}{2}X(0)\), we have
\begin{equation}
X(t) \leq \frac{4}{(X(0))^2}
\end{equation} for time \(0\leq t \leq T\). Hence, we have proved the lemma.
\end{proof}

\begin{lemma} \label{lowerbound:Dg}
If the level set \(\Gamma_\epsilon(t)\) of \(g\) is convex, then there exists a constant \(C>0\) such that
\begin{equation}
|\nabla g|\geq C, \ \  \text{on} \ 0\leq g(\cdot,t) \leq 1, \ 0\leq t \leq T.
\end{equation}
\end{lemma}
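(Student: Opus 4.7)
The plan is to mirror the proof of Lemma~\ref{upperbound:Dg}: set $X=\tfrac12|\nabla g|^2$ and show that $\inf_{\{0\leq g\leq 1\}\times[0,T]} X > 0$. By the parabolic maximum principle, the infimum is attained either at the initial time, on one of the boundary level sets $\{g=0\}$ or $\{g=1\}$, or at an interior space-time point, and I will treat each case in turn.

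At $t=0$, the non-degeneracy assumption $|\nabla g(\cdot,0)|\geq\lambda$ on $\Gamma$, combined with the uniform $C^{1,1}$ regularity of $f$ and the strict convexity of $f$ away from the interface, provides a positive lower bound for $X(\cdot,0)$ on $\{0\leq g\leq 1\}$. On the level set $\{g=1\}$ the flow is strictly convex and smooth, so classical interior parabolic estimates furnish a uniform lower bound for $|\nabla g|$. On the interface $\{g=0\}$, tangential derivatives vanish, $|\nabla g|=g_\nu$, and the boundary identity $g_t=g_\nu^3 H$ together with the non-degenerate speed of $\Gamma(t)$ established in Section~\ref{sec:speed} implies $g_\nu^2 H\geq c>0$; since $H$ is bounded above by the uniform curvature control used in Lemma~\ref{ineq:f_e}, I conclude $g_\nu\geq c'>0$ on the interface.

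The substantive step is at an interior minimum $P_0=(x_0,t)$ of $X$. Rotating coordinates so that $g_1=g_\nu>0$ and $g_i=0$ for $i\geq 2$, and diagonalizing the Hessian along the level set so that $g_{ij}=0$ for $i\neq j$ with $i,j\geq 2$, the identity $\nabla X=0$ forces $g_{11}=0$ and $g_{1i}=0$ for $i\geq 2$. At the minimum the Hessian $(X_{ij})$ is positive semi-definite, and the coefficient matrix $(a_{ij})$ of the operator $L$ from Lemma~\ref{lem:g_m} is also positive semi-definite by the degenerate ellipticity of the scalar curvature operator on the convex cone, so $\sum_{i,j}a_{ij}X_{ij}\geq 0$ at $P_0$. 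Repeating the computation of Lemma~\ref{upperbound:Dg} with reversed inequalities, the remaining zeroth-order contribution to $X_t$ reorganizes, via
\[
g_1^2 H_\epsilon^2-\sum_{i\geq 2}g_{ii}^2 \;=\; 2\sum_{2\leq i<j}g_{ii}g_{jj}\;\geq\; 0
\]
(convexity of the level set), so the only potentially negative term left is $-3gg_1^7H_\epsilon/I^{5/2}$. Evaluating the speed estimate (\ref{ineq:bdryspeed}) at $P_0$ reduces it, under $g_{11}=g_{1i}=0$, to a sum of two non-negative terms each bounded by $C_2$, which in particular gives $g_1^2H_\epsilon/I^{3/2}\leq C_2$. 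This controls the negative term by $3C_2 gg_1^5/I$, and the same dichotomy between $g^2g_1^2\leq 1$ and $g^2g_1^2\geq 1$ as in Lemma~\ref{upperbound:Dg} then yields a differential inequality of the form $X_t\geq -CX^{5/2}$ on the smooth approximation $g_\epsilon$; ODE comparison produces a lower bound on $X$ that stays strictly positive on $[0,T]$ provided $\epsilon$ is chosen sufficiently small relative to the initial infimum.

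The main obstacle is ensuring the lower bound is uniform in the approximation parameter $\epsilon$ while simultaneously controlling the boundary contributions on $\{g=0\}$ and $\{g=1\}$. As in Lemma~\ref{upperbound:Dg}, this amounts to choosing $\epsilon$ small enough that the comparison ODE remains strictly positive on $[0,T]$; the limit $\epsilon\to 0$ then delivers the claimed lower bound for $|\nabla g|$.
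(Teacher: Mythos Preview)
Your approach differs from the paper's and, as written, has a genuine gap at the interior–minimum step.

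Recall from the proof of Lemma~\ref{upperbound:Dg} that at the critical point $P_0$ (where $g_{11}=g_{1i}=0$ and $g_{ij}=0$ for $i\neq j$) the evolution of $X=\tfrac12|\nabla g|^2$ reads
\[
X_t \;=\; a_{11}X_{11}+\sum_{i\geq 2}a_{ii}X_{ii}
\;+\;\frac{g_1^2}{2I^{3/2}}\Big(g_1^2H_\epsilon^2-\sum_{i\geq2}g_{ii}^2\Big)
\;-\;\frac{3gg_1^7}{I^{5/2}}H_\epsilon
\;-\;\frac{g_1^2}{I^{3/2}}\sum_{i\geq 2}g_{ii}^2 .
\]
You correctly identify the first bracketed zeroth-order term as non-negative and the middle term as the one you try to control, but you have dropped the last term $-\dfrac{g_1^2}{I^{3/2}}\sum_{i\geq2}g_{ii}^2$. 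This term is not absorbed anywhere. Using the speed bound $g_1^2H_\epsilon/I^{3/2}\leq C_2$ and $g_{ii}\geq0$ one only gets $g_1\sum_{i\geq2}g_{ii}\leq C_2I^{3/2}$, hence $\frac{g_1^2}{I^{3/2}}\sum_{i\geq2}g_{ii}^2\leq (n-1)C_2^2I^{3/2}$. In other words this negative term is of order one as $g_1\to 0$, not of order $X^{5/2}$; the best differential inequality you can extract is $X_t\geq -C$, and the comparison ODE $Y_t=-C$ hits zero in finite time. So the argument does not yield a positive lower bound. (Your boundary step on $\{g=0\}$ is also fragile: it needs an \emph{upper} bound on the mean curvature $H$ of the convex interface, which is not available at this stage.)

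The paper sidesteps this by choosing a different test quantity: instead of $\tfrac12|\nabla g|^2$ it takes the radial-type combination
\[
X \;=\; x\cdot\nabla g \;-\; k\,x\cdot\nabla f,\qquad k>0\ \text{small},
\]
and works at its interior minimum. The first-order condition $X_1=0$ now gives an \emph{upper} bound on $g_{11}$ rather than forcing $g_{11}=0$, and the evolution equation combined with the two-sided speed bound $C_1\leq g_t/g_1\leq C_2$ yields a linear Gr\"onwall inequality $X_t\geq -\alpha X-k\beta$. Choosing $k$ small makes $\min X(t)\geq\tfrac12 e^{-\alpha T}\min X(0)=:C'>0$, whence $x_1g_1\geq X\geq C'$ and $|\nabla g|=g_1\geq C'/R$. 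The point is that the radial quantity produces a \emph{linear} lower differential inequality, which survives Gr\"onwall, whereas $\tfrac12|\nabla g|^2$ produces an inequality with a constant negative floor that does not.
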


\begin{proof}
Let us define a quantity \(X=Y-kx \cdot \nabla f=x \cdot \nabla g-kx \cdot \nabla f\), where \(k\) is a positive constant that will be determined later. Suppose that at each time \(0\leq t <T\), \(X\) attains an interior minimum at \(P_0=(x_0, t)\) so that \(X(x_0,t)=\inf \{X(x,t);\ x\in \Sigma,\ 0\leq g(x,t)\leq 1\}\). We make \(x_1>0\) and \(x_i=0\) for \(i=2, \ldots , n\). \\

At \(P_0\) on \(\Gamma_\epsilon\),  we choose local coordinates such that the normal derivative is \(g_\nu=g_1\) and the tangential derivatives to the level set \(\Gamma_\epsilon\) vanish, in other words, \(g_i=0\) for \(i=2,\ldots,n\), and furthermore \(g_{ij}=0\) for all \(i\neq j\). Then we have \( \sum_{i,j=2}^n (g_{ii} g_{jj}-g_{ij}^2)=\sum_{i,j=2}^n g_{ii} g_{jj} - \sum_{i=1}^ng_{ii}^2=\sum_{i,j=2,\ i \neq j}^n g_{ii} g_{jj}\) and \(\sum_{i=2}^n (g_{ii}g_{11}-g_{1i}^2)=\sum_{i=2}^n g_{ii}g_{11}\) . \\

Then the inequality (\ref{ineq:bdryspeed}) becomes at \(P_0\),
\begin{equation}
C_1\leq \frac{g_t}{g_1}=\frac{1}{2 g_1 \sqrt{I}}g \sum_{i,j=2}^n g_{ii} g_{jj}+\frac{1}{g_1 I^{3/2} } \sum_{i=2}^n ( g g_{ii} g_{11}+g_1^2 g_{ii}) \leq C_2
\end{equation}
\begin{equation}
C_1\leq \frac{g_t}{g_1}=\frac{1}{2 g_1 \sqrt{I}}g\overline{R}_{g,2}+\frac{1}{I^{3/2} } g_1 \sum_{i=2}^n g_{ii} \leq C_2, 
\end{equation}
where
\begin{equation}
\overline{R}_{g,2}=\sum_{i,j=2}^n (g_{ii} g_{jj} -g_{ij}^2)+\frac{2}{I}\sum_{i=2}^n(g_{ii} g_{11} -g_{1i}^2). \\
\end{equation}

At \(P_0\),  \(x \cdot \nabla f=g x \cdot \nabla g=g x_1 g_1+g x_i g_i=g x_1 g_1\) in our local coordinates where \(g_i=0\) for \(i=2 \ldots n\). Also, we have \(X=x_1 g_1 -k g x_1 g_1\) and the first-order derivatives of \(X\) are 
\begin{equation}
\begin{split}
X_1&=g_1+x_1 g_{11}-k(g_1^2 x_1+g g_1+ g x_1 g_{11})=0, \\
X_i&=g_i+x_1 g_{1i}-k(g_i x_1 g_1+g g_i+ g x_1 g_{1i}) =x_1 g_{1i}-k g x_1 g_{1i}=0 \ \text{for} \ i=2, \ldots , n, \\
\end{split} 
\end{equation} and its second-order derivatives at \(P_0\) are 

\begin{equation} 
\begin{split} 
X_{11}&=x_1 g_{111} + 2g_{11}-k(3 x_1 g_1 g_{11}+2g_1^2+2g g_{11}+ g x_1 g_{111}),\\
X_{1i}&=x_1 g_{11i} + 2g_{1i}-k(2g_{1i} x_1 g_1+2g g_{1i}+ g x_1 g_{11i}),\  i=2, \ldots , n, \\
X_{ij}&=x_1 g_{1ij} + 2g_{ij}-k(g_{ij} x_1 g_1+g g_{ij}+ g x_1 g_{1ij}), \ i,j=2, \ldots , n. \\
\end{split}
\end{equation}

Then the evolution equation of \(X\) at the point \(P_0\) can be written as 
\begin{equation}
\begin{split}
X_t &=\bigg(\frac{g \sum_{j=2}^n g_{jj} }{\sqrt{I} } +\frac{g_1^2+g g_{11} }{ I^{3/2} }\bigg)\sum_{i =2}^n (X_{ii}-2g_{ii}+kg_{ii} x_1 g_1+kg g_{ii})+\frac{x_1 g_1}{2\sqrt{I} }  \overline{R}_{g,2}- \frac{kg x_1 g_1}{2\sqrt{I}} \overline{R}_{g,2} \\
&\ \  + \frac{g \sum_{i=2}^n g_{ii} }{ I^{3/2} }(X_{11}-2 g_{11}+3k x_1 g_1 g_{11} +2k g_1^2+2k g g_{11})\\
&\ \  +\frac{2g_1 \sum_{i=2}^n g_{ii} }{ I^{3/2} }(X_1-g_1+kg_1^2 x_1+kg g_1)- \left( \frac{1}{2\sqrt{I}} g\overline{R}_{g,2}+\frac{1}{I^{3/2}}g_1^2 \sum_{i=2}^n g_{ii} \right)k x_1 g_1 \\
&\ \  -\left(\frac{1}{2 I^{3/2}}g \overline{R}_{g,2} +\frac{1} { I^{5/2}}g_1^2 \sum_{i=2}^n g_{ii} \right) x_1 g g_1 (g_1^2+g g_{11}) \\
&\ \  +\left(\frac{1}{2 I^{3/2}}g \overline{R}_{g,2}+\frac{1} { I^{5/2}}g_1^2 \sum_{i=2}^n g_{ii} \right) k x_1 g^2 g_1(g_1^2+g g_{11})  \\
&\ \  -\frac{  2 x_1 g g_1 (g_1^2+g g_{11})^2 }{ I^{5/2} } \sum_{i=2}^n g_{ii} +\frac{  2k x_1 g^2 g_1 (g_1^2+g g_{11})^2 }{ I^{5/2}} \sum_{i=2}^n g_{ii} 
\end{split}
\end{equation}

which implies that
\begin{equation}
\begin{split}
X_t  &\geq k C_1 x_1 g_1^2+k C_1 g g_1 +\frac{k x_1 g_1}{ I^{3/2} }(g_1^2 +3g g_{11})\sum_{i=2}^n g_{ii}  +\frac{k g}{ I^{3/2} }(3g_1^2 +g g_{11}) \sum_{i=2}^n g_{ii} \\
&\ \  -3C_2 g_1 - C_2 k x_1 g_1^2 -\frac{3 C_2}{I} x_1 g g_1^2(g_1^2+g g_{11})+\frac{x_1 g_1}{2\sqrt{I} }  \overline{R}_{g,2} \\
&\ \  +\frac{C_1}{I} k x_1 g^2 g_1^2(g_1^2+g g_{11})  +\frac{  2k x_1 g^2 g_1 }{ I^{5/2} }  ( g_1^2+g g_{11})^2 \sum_{i=2}^n g_{ii} . \\
\end{split}
\end{equation}

On the other hand, we have at \(P_0\)
\begin{equation}
0=X_1=x_1(1-kg)g_{11}+g_1-k(g_1^2 x_1+g g_1)
\end{equation} so that by letting \( g \leq \frac{1}{2k}\) it holds that

\begin{equation}
g_{11}=\frac{-g_1+k(g_1^2 x_1+g g_1)}{x_1(1-kg)} \leq \frac{2k(C^2 R+C)}{\rho}
\end{equation} and that

\begin{equation}
\begin{split}
X_t &\geq  -k(C_2- C_1) C X -\frac{3C_2}{\rho}X - 3 C_2 C^3 X -3 C_2  C \frac{2k(C^2 R+C)}{\rho} X \\
&\ \  -\left( k(C_2- C_1) C+\frac{3C_2}{\rho}+(3 C_2-C_1)C^3 +(3 C_2-C_1)C \frac{2k(C^2 R+C)}{\rho} \right) k \rho C \\
&=-\alpha X -k\beta.
\end{split}
\end{equation}

Hence, by Grönwall's inequality, we obtain
\begin{equation}
\begin{split}
\text{min}_{x \in M} X(x,t) &\geq e^{-\alpha t}( \text{min}_{x \in M}X(x,0)-k \beta t ) \geq \frac{1}{2} e^{-\alpha T}\text{min}_{x \in M}X(x,0)=:C',
\end{split}
\end{equation} if we take \(k \leq \frac{\text{min}_{x \in M}X(x,0)}{2\beta T}\). Then \(Y=x_1 g_1 \geq  X \geq C'\) and \(|\nabla g|=g_1\geq \frac{C'}{R}\). We have obtained the lower bound of \(|\nabla g|\). \\
\end{proof}

\subsection{Second-order derivative estimates}

The purpose of this section is to obtain the optimal bound of the second-order tangential derivatives of \(g\) which gives the optimal decay rates of the second-order derivatives of \(f\). We get estimates on second-order derivatives of \(g\) first, under the assumptions that we have given in the subsection \ref{subsec:assump}. We also find a lower bound of \(R_{g,2}\), which is similar to the Aronson-Bénilan inequality \(\Delta u \geq -\frac{C}{t}\) for the porous medium equation.

\begin{lemma} Under the assumptions in the subsection \ref{subsec:assump}, there exists a constant \(c>0\) for which
\begin{equation}
c \leq g_t \leq c^{-1}.
\end{equation}
\end{lemma}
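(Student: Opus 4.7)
The plan is to read off the bound by combining the finite-and-non-degenerate-speed theorems of Section \ref{sec:speed} with the two-sided gradient bounds of Lemmas \ref{upperbound:Dg} and \ref{lowerbound:Dg}. The key identity, already extracted in the proof of Lemma \ref{upperbound:Dg}, is that on a level set $\Gamma_\epsilon(t) = \{g(\cdot, t) = \epsilon\}$ parametrized radially as $r = \gamma_\epsilon(\theta, t)$, differentiating $g \equiv \epsilon$ in $t$ gives $g_t = -g_r \gamma_\epsilon'(t)$, and consequently
\begin{equation*}
\frac{g_1}{g_t} = -\frac{x_1}{r\,\gamma_\epsilon'(t)}
\end{equation*}
in local coordinates where $e_1$ is the outer normal to $\Gamma_\epsilon$.

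First I would apply the finite-and-non-degenerate speed theorems of Section \ref{sec:speed} to control $\gamma_\epsilon'(t)$. Starting from
\begin{equation*}
\gamma_\epsilon(\theta, t_0)\, e^{-(t - t_0)/(|B|t_0)} \leq \gamma_\epsilon(\theta, t) \leq \gamma_\epsilon(\theta, t_0)\, e^{-A(t-t_0)/(D + |B|T)},
\end{equation*}
combined with $\rho_0 \leq \gamma_\epsilon \leq R$ (from the assumptions $D_{\rho_0} \subset \Sigma_0(T)$ and $\Omega \subset B_R$), I obtain uniform positive constants $C_1, C_2$ for which $C_1 \leq g_t/g_1 \leq C_2$ on every level set $\Gamma_\epsilon(t)$ with $0 < \epsilon \leq 1$; this is exactly the content of the estimate already displayed as (\ref{ineq:bdryspeed}).

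Next, I would multiply this ratio bound by the gradient estimates of Lemmas \ref{upperbound:Dg} and \ref{lowerbound:Dg}, which give $c_1 \leq g_1 = |\nabla g| \leq c_2$ on $\{0 \leq g \leq 1\}$. The result is $C_1 c_1 \leq g_t \leq C_2 c_2$ on $\{0 < g \leq 1\}$, and passing $\epsilon \to 0^+$ extends the bounds up to the interface $\Gamma(t)$, where they are also consistent with the explicit identity $g_t = g_\nu^3 H$ derived earlier. For the interior region $\{g \geq 1\}$, the evolution equation (\ref{eq:g_t}) is uniformly parabolic on a smooth, strictly convex piece of the hypersurface, so differentiating (\ref{eq:g_t}) in $t$ yields a linear uniformly parabolic equation for $g_t$; the standard maximum principle applied with the boundary data on $\{g = 1\}$ obtained above, together with the strictly positive initial data (guaranteed by $\sigma_2 > 0$ and the non-degeneracy condition at $t = 0$), yields a matching two-sided bound there.

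The main obstacle is essentially bookkeeping: keeping the constants $C_1, C_2, c_1, c_2$ uniform as $\epsilon \to 0^+$, verifying that the smooth approximations $g_\epsilon = \sqrt{2 f_\epsilon}$ introduced for Lemmas \ref{upperbound:Dg} and \ref{lowerbound:Dg} inherit the same bounds in the limit, and patching the near-interface and interior bounds at $\{g = 1\}$. No new analytic input beyond Section \ref{sec:speed} and Lemmas \ref{upperbound:Dg}, \ref{lowerbound:Dg} is required.
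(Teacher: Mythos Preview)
Your proposal is correct and follows essentially the same route as the paper: the paper's proof also combines the identity $g_t=-g_r\,\dot\gamma(\theta,t)$ with the two-sided bounds on $\dot\gamma$ from Section~\ref{sec:speed} and the two-sided gradient bounds from Lemmas~\ref{upperbound:Dg} and~\ref{lowerbound:Dg}. The only difference is cosmetic---the paper bounds $g_r$ and $\dot\gamma$ directly, whereas you pass through the ratio $g_t/g_1$ from (\ref{ineq:bdryspeed}) and then multiply by $g_1$; your extra discussion of the interior region $\{g\geq 1\}$ via the maximum principle is not needed, since the lemma (and its use in the subsequent corollary) is only asserted near the interface.
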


\begin{proof}
We have
\begin{equation}
g_r \dot{\gamma}(\theta, t)+g_t=0
\end{equation} so that
\begin{equation}
g_t=-g_r \dot{\gamma}(\theta, t).
\end{equation}
On the other hand, \(g_r\) is bounded, because
\begin{equation}
g_r=\nabla_x g \cdot \partial_r x=\frac{r g_i}{x_i} 
\end{equation} where \(g_i \) are bounded by Lemma \ref{upperbound:Dg} and \ref{lowerbound:Dg} and \(x_i\) are bounded by \( R>0\) in the subsection \ref{subsec:assump}.
Also, \(\dot{\gamma}(\theta, t)\) is bounded by two negative constants. From these bounds we get the result.
\end{proof}

\begin{corollary} Under the assumptions in the subsection \ref{subsec:assump}, there exists a constant \(C>0\) such that
\begin{equation} \label{ineq:f_ii f_jj -f_ij^2}
C \leq \frac{1}{2g\sqrt{1+|\nabla_x f|^2} } \sum_{i, j =1}^n (f_{ii} f_{jj}-f_{ij}^2) \leq C^{-1}.
\end{equation}
\end{corollary}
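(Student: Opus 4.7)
The plan is to identify $\frac{R_{f,2}}{2g\sqrt{1+|\nabla_x f|^2}}$ with $g_t$ plus a manifestly non-negative correction that is itself dominated by $g_t$. The two-sided bound will then drop out of the preceding lemma $c\le g_t\le c^{-1}$ together with the gradient estimates of Lemmas~\ref{upperbound:Dg}--\ref{lowerbound:Dg}.

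At a point $(x_0,t)$ in the region where the earlier estimates are valid, Lemma~\ref{lowerbound:Dg} guarantees $\nabla g\ne 0$, so I would choose an orthonormal frame with $e_1=\nabla g/|\nabla g|$ that diagonalises the tangential block of $D^2g$; in this frame $g_i=0$ for $i>1$ and $g_{ij}=0$ for $i\ne j$. From $f=\tfrac12 g^2$ one reads off $f_{11}=g_1^2+gg_{11}$, $f_{ii}=gg_{ii}$ for $i>1$, and $f_{ij}=0$ off-diagonal, so a short computation gives
\[
R_{f,2}=\sum_{i\ne j}f_{ii}f_{jj}=2(g_1^2+gg_{11})g\sum_{j>1}g_{jj}+g^2\sum_{\substack{i,j>1\\ i\ne j}}g_{ii}g_{jj}.
\]
In the same frame the equation for $g_t$ on $\Gamma_\epsilon$ reduces to $g_t=A+B$ with
\[
A=\frac{(g_1^2+gg_{11})\sum_{j>1}g_{jj}}{I^{3/2}},\qquad B=\frac{g}{2\sqrt I}\sum_{\substack{i,j>1\\ i\ne j}}g_{ii}g_{jj},\qquad I=1+|\nabla_x f|^2,
\]
and comparing the two expressions produces the key identity
\[
\frac{R_{f,2}}{2g\sqrt I}=IA+B=g_t+(I-1)\,A.
\]

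I would then invoke convexity. The flow preserves convexity of $f$, so $f_{11}=g_1^2+gg_{11}\ge 0$; the sub-level sets of $g$ are convex, so in this frame $g_{ii}\ge 0$ for $i>1$. Hence $A,B\ge 0$, in particular $0\le A\le g_t$, and together with $I\ge 1$ this sandwiches
\[
g_t\ \le\ \frac{R_{f,2}}{2g\sqrt{1+|\nabla_x f|^2}}\ \le\ I\,g_t.
\]
By the preceding lemma $c\le g_t\le c^{-1}$, and by Lemma~\ref{upperbound:Dg} together with the a priori bound $g\le\sqrt 2$ on the relevant region we have $1\le I\le I_{\max}$ for some constant $I_{\max}$. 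Choosing $C=\min(c,\,c/I_{\max})$ gives the stated double inequality.

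The main obstacle is essentially the algebraic bookkeeping for the identity $\frac{R_{f,2}}{2g\sqrt I}=g_t+(I-1)A$: the computation is carried out in a specific frame, but both sides are scalar invariants of $(g,Dg,D^2g)$, so the identity is valid pointwise. A secondary point to verify is that the non-negativities $A,B\ge 0$ really hold throughout the region $\{0<g\le 1\}\times[0,T]$ where the whole chain of earlier estimates operates, which reduces to the convexity properties just cited.
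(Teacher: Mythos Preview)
Your argument is correct and in substance coincides with the paper's: both routes produce the sandwich
\[
g_t\ \le\ \frac{R_{f,2}}{2g\sqrt{1+|\nabla f|^2}}\ \le\ I\,g_t
\]
and then invoke the preceding two-sided bound on $g_t$. The paper reaches this sandwich by writing $f_t$ (in a frame with $\nabla f$ along $e_1$) as a weighted sum of $2\times 2$ principal minors of $D^2f$ with weights between $1/(1+|\nabla f|^2)$ and $1$, then dividing $f_t=gg_t$ by $g$; your explicit identity $\frac{R_{f,2}}{2g\sqrt I}=g_t+(I-1)A$ is the same inequality in different clothing.

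One small slip worth flagging: fixing $e_1=\nabla g/|\nabla g|$ and diagonalising the tangential block only gives $g_{ij}=0$ for $i,j>1$; the mixed entries $g_{1j}$ need not vanish, so $f_{1j}=gg_{1j}$ is generically nonzero. This does not damage the proof: keep these terms and redefine
\[
A=\frac{1}{I^{3/2}}\sum_{j>1}\bigl((g_1^2+gg_{11})g_{jj}-gg_{1j}^2\bigr)
  =\frac{1}{gI^{3/2}}\sum_{j>1}\bigl(f_{11}f_{jj}-f_{1j}^2\bigr),
\]
which is still $\ge 0$ by convexity of $f$, and the identity $\frac{R_{f,2}}{2g\sqrt I}=IA+B=g_t+(I-1)A$ continues to hold verbatim. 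The paper sidesteps this bookkeeping by never leaving the $f$-variables.
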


\begin{proof}
Recall that, in the coordinates where \(g_i=g_1 \delta_{1i}\) so that \(f_i=f_1 \delta_{1i}\),
\begin{equation} 
\begin{split}
f_t &= \frac{1}{\sqrt{1+|\nabla_x f|^2}}(1-\frac{f_1^2}{1+|\nabla_x f|^2})\sum_{ i=2}^n (f_{ii} f_{11}-f_{i1}^2) +\frac{1}{2\sqrt{1+|\nabla_x f|^2} } \sum_{i, j =2}^n (f_{ii} f_{jj}-f_{ij}^2) \\
\end{split} 
\end{equation}
we have
\begin{equation}
\frac{1}{2 (1+|\nabla_x f|^2)^{3/2} } \sum_{i, j =1}^n (f_{ii} f_{jj}-f_{ij}^2)\leq f_t \leq \frac{1}{2\sqrt{1+|\nabla_x f|^2} } \sum_{i, j =1}^n (f_{ii} f_{jj}-f_{ij}^2).
\end{equation}

Since \(f_t=g g_t\),  we get the desired result as \(g\) tends to \(0\) near the interface.
\end{proof}

\begin{lemma}
Let us assume the conditions in the subsection \ref{subsec:assump}. Then there exists a constant \(C>0\) satisfying
\begin{equation*}
0<\sum_{i \neq 1} g_{ii} \leq C.
\end{equation*}
for sufficiently small value of \(g\) near the interface.
\end{lemma}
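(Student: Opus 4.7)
The plan is to work in orthonormal coordinates at a near-interface point in which $e_1 = \nabla g/|\nabla g|$ is the outer normal to the level set and $e_2,\dots,e_n$ diagonalize the tangential block of $D^2 g$, so that $g_i = g_1\delta_{1i}$ and $g_{ij}=0$ for $2\leq i\neq j\leq n$ (the mixed entries $g_{1i}$ with $i\geq 2$ need not vanish). In these coordinates the quantity of interest is $T_3 := \sum_{i\geq 2}g_{ii}$, and the level-set evolution equation takes the compact form
\[
g_t\,I^{3/2} \;=\; \tfrac{1}{2}\,g\,T_1\,I \;+\; g\,T_2 \;+\; g_1^2\,T_3,
\]
with $T_1 := 2\sum_{2\leq i<j\leq n}g_{ii}g_{jj}$ and $T_2 := g_{11}T_3 - \sum_{i\geq 2}g_{1i}^2$.

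For the strict positivity $T_3>0$, I would use that convexity of $f$ is preserved by the scalar curvature flow, which together with the identity $f_{ij} = g_ig_j + gg_{ij}$ makes the tangential block $g\cdot\mathrm{diag}(g_{22},\dots,g_{nn})$ of $D^2 f$ positive semidefinite; hence $g_{ii}\geq 0$ for $i\geq 2$ and $T_3\geq 0$. To rule out $T_3 = 0$ I would argue by contradiction: if each $g_{ii}$ with $i\geq 2$ vanished at some near-interface point, then $T_1 = 0$ and $T_2 = -\sum g_{1i}^2 \leq 0$, whence the displayed equation gives $g_t\,I^{3/2}\leq 0$, contradicting the lower bound $g_t\geq c>0$ from the previous lemma.

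For the upper bound I would start from the preceding corollary, whose estimate $R_{f,2}/(2g\sqrt{I})\leq C^{-1}$ expands, via the identity $R_{f,2} = 2gg_1^2T_3 + g^2R_{g,2}$, to
\[
2g_1^2\,T_3 \;+\; g\,R_{g,2} \;\leq\; 2C^{-1}\sqrt{I}.
\]
Under the $C^{2+\beta}$ hypothesis in force in this section, $R_{g,2}$ is pointwise controlled, so $gR_{g,2}\to 0$ as $g\to 0$; combined with the lower bound $g_1\geq c_1>0$ from Lemma~\ref{lowerbound:Dg}, this forces $T_3\leq C$ once $g$ is sufficiently small. The main obstacle is that this argument uses the $C^{2+\beta}$ bound only to make $gR_{g,2}$ negligible, so the threshold for ``sufficiently small $g$'' depends a priori on the regularity constant; a cleaner resolution uses the identification $T_3 = g_1 H_\epsilon$, where $H_\epsilon$ is the mean curvature of the level set $\{g=\epsilon\}$, together with the boundary relation $g_t = g_\nu^3 H$ on $\Gamma(t)$ derived earlier, so that $H = g_t/g_\nu^3$ is bounded on the interface by the previous lemma, and continuity of the Hessian of $g$ up to $\Gamma(t)$ propagates this bound to a neighborhood where $g$ is small.
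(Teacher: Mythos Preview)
Your positivity argument is sound: convexity of $f$ gives $f_{ii}=gg_{ii}\geq 0$ for $i\geq 2$, and if all tangential $g_{ii}$ vanished at a point then your displayed level-set identity would force $g_t\,I^{3/2}=gT_2\leq 0$, contradicting $g_t\geq c>0$.

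The upper bound, however, has a genuine gap, and you have in fact identified it yourself without resolving it. Both versions of your argument give a bound on $T_3$ whose constant, or whose admissible neighborhood of the interface, depends on the assumed $C^{2+\beta}$ norm of $g$. In the first version you need $gR_{g,2}\to 0$; but $R_{g,2}$ contains the term $2g_{11}T_3$, so absorbing $gR_{g,2}$ into the inequality only converts it to $2f_{11}T_3+g\sum_{i,j\geq 2}(g_{ii}g_{jj}-g_{ij}^2)\leq 2C^{-1}\sqrt{I}$, and at this stage you do not have a uniform positive lower bound on $f_{11}$ (that is established only \emph{after} the present lemma, in the corollary deriving $f_{11}\geq C$). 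In the second version you bound $T_3=g_t/g_\nu^2$ on the interface itself and then invoke continuity of $D^2 g$ up to $\Gamma(t)$; but the width of the neighborhood in which this propagates, and hence the meaning of ``sufficiently small $g$'', is controlled by the modulus of continuity of $D^2 g$, i.e.\ by the $C^{2+\beta}$ norm. The whole point of this section is to obtain a priori estimates depending only on initial data and $T$, uniform along the approximating sequence $g_\epsilon$ and uniform as $t\uparrow T$, so that the continuation argument in Section~\ref{sec:existence} goes through. Your bounds are qualitative where a quantitative one is required.

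The paper takes a completely different route: it introduces the auxiliary quantity
\[
X \;=\; \sum_{i,j}(g_i^2 g_{jj}-g_ig_jg_{ij}) \;+\; \Delta f,
\]
which in the adapted frame at an interior spatial maximum equals $g_1^2\,T_3+\Delta f$, and runs a (lengthy) parabolic maximum-principle computation to show $X_t\leq LX\leq C$ there, with $C$ depending only on the already-established gradient bounds and the $C^{1,1}$ control of $f$. Since $\Delta f$ is uniformly bounded and $g_1$ is bounded below, this yields $T_3\leq C$ with the correct dependence. Your shortcut via the level-set speed identity recovers only the boundary value of this estimate, not the interior propagation that the maximum principle supplies.
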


\begin{proof}
Let us define a quantity 
\begin{equation}\begin{split}
X&:= \sum_{i,j=1}^n (g_i^2 g_{jj}-g_i g_j g_{ij})+\Delta f = \sum_{i,j=1}^n (g_i^2 g_{jj}-g_i g_j g_{ij})+\sum_{i=1}^n (g_i^2+g g_{ii}) .\\
\end{split} \end{equation} 

We work at an interior point \(P_0\) of the domain, where \(X\) takes its supremum. At \(P_0\), we take the coordinate system where \(g_i=0\) for \(i \neq 1\) and \(e_1=\nu\), the normal direction to \(\Gamma_\epsilon\), and \(g_{ij}=0\) for \(i \neq j\).  We assume that \(P_0\) is on a level set \(\Gamma_\epsilon\). Then we have 
\begin{equation}
X= g_1^2 \sum_{i =2}^n g_{ii}+(g_1^2 +g g_{11} +g \sum_{i=2}^n g_{ii}) .\\
\end{equation}

In Lemma \ref{upperbound:Dg} and \ref{lowerbound:Dg}, we have shown \(|\nabla g|= g_1\) is bounded from above and below; there exist a constant \(c>0\) such that \(0<c\leq g_1 \leq c^{-1}\) on \(g>0, \ 0\leq t \leq T\). Also, \(\Delta f=g_1^2+g \sum_{i=1}^n g_{ii}=g_1^2+g g_{11}+g \sum_{i=2}^n g_{ii} \) is bounded as \(f \in C^{1,1}\). Hence, an upper bound of \(X\) will give an upper bound of the tangential Laplacian \(\Delta_\tau g =\sum_{i=2}^n g_{ii} \). \\

Differentiating \(X\) with respect to space variables at \(P_0\), we get for all \(1\leq k \leq n\)
\begin{equation}
0=X_k =\sum_{i,j=1}^n (g_j^2g_{iik}-g_i g_j g_{jik}+2g_j g_{ii} g_{jk} -2g_j  g_{ji} g_{ik}) +\sum_{i=1}^n (g g_{iik}+2g_i g_{ik}+g_k g_{ii}) .
\end{equation}

We have, at \(P_0\),
\begin{equation}
\begin{split}
0=X_1&=g g_{111}+(g_1^2+g)\sum_{i \neq 1} g_{ii1} +2g_1 g_{11}\sum_{i \neq 1}g_{ii} +3g_1 g_{11}+g_1 \sum_{i \neq 1} g_{ii} ,\\
0=X_k&=g_1^2 \sum_{i=1}^n g_{iik}-g_1^2 g_{11k} +\sum_{i=1}^n (g g_{iik})=g g_{11k} +(g_1^2+g)\sum_{i \neq 1}g_{iik}  ,\\
\end{split}
\end{equation}
and then
\begin{equation}
\begin{split}
\sum_{i \neq 1} g_{ii1}&=-\frac{1}{g+g_1^2} ( g g_{111} +2g_1 g_{11}\sum_{i \neq 1} g_{ii} +3g_1 g_{11}+\sum_{i \neq 1} g_1 g_{ii} ) ,\\
\sum_{i \neq 1} g_{iik}&=-\frac{1}{g+g_1^2} g g_{11k} , \text{ for } k \neq 1 . \\
\end{split}
\end{equation}

Differentiating  \(X\) with respect to time, we get \(X_t\) as follows. We use the indices \(m, p\) instead of \(i,  j\) in order to avoid any possible confusions. The right side of the evolution equation of \(X\) will contain spatial derivatives of \(g\) of order no greater than 4, because the equation (\ref{eq:g_mp}) is a second-order equation. Using an index \(2 \leq m \leq n\), we can simply write the evolution of \(X\) as the following:
\begin{equation}
\begin{split}
X_t&=g_1^2 \sum_{m \neq 1}( g_{mm})_t+2g_1 (g_1)_t \sum_{m \neq 1} g_{mm} +\big(g (g_{11})_t+g \sum_{m \neq 1} (g_{mm})_t+2g_1 (g_1)_t+g_t g_{11}+g_t \sum_{m \neq 1} g_{mm} \big) . \\
\end{split}
\end{equation} 

Now, let us consider the second-order derivatives of \(X\). They are
\begin{equation}
\begin{split}
&X_{kl} =\sum_{i,j=1}^n (g_j^2g_{iikl}-g_i g_j g_{ijkl} ) \\
&\   +\sum_{i,j=1}^n (2g_j g_{jl} g_{iik}-2g_j g_{il} g_{ijk} +2g_j g_{jk} g_{iil}-2g_j g_{ik} g_{ijl} +2g_j g_{ii}g_{jkl} -2g_j g_{ji}g_{ikl}  +2g_{jk} g_{jl}g_{ii}-2g_{jk} g_{il} g_{ij}) \\
&\  +\sum_{i=1}^n (g g_{iikl}+g_k g_{iil}+g_l g_{iik}+2g_i g_{ikl}+g_{kl} g_{ii}+2g_{ik} g_{il}) .\\
\end{split}
\end{equation}

At \(P_0\) under the coordinates \(g_k=0\) for \(k \neq 1\) and \(g_{ij}=0\) for \(i \neq j\),
\begin{equation}
\begin{split}
X_{kl}&=g_1^2 \sum_{i =2}^n g_{iikl} +2g_1 g_{1l} \sum_{i=1}^n g_{iik}+2g_1 g_{1k} \sum_{i=1}^n g_{iil}  +2g_1 \sum_{i \neq 1,k,l}g_{ii} g_{1kl}  +2g_{kl} g_{kk} \sum_{i \neq k} g_{ii} \\
&\ \  +(g \sum_{i=1}^n g_{iikl}+2g_1 g_{1kl} +g_k \sum_{i=1}^ng_{iil} +g_l \sum_{i=1}^n g_{iik}+g_{kl} \sum_{i=1}^n g_{ii}+2g_{kl} g_{kk} ) \\
\end{split}
\end{equation}

In particular,
\begin{equation}
\begin{split}
X_{11}&=g_1^2 \sum_{i=2}^n g_{ii11} +4g_1 g_{11} \sum_{i=1}^n g_{ii1} +2g_1 (\sum_{i=2}^n g_{ii} -2g_{11}  ) g_{111} +2g_{11}^2 \sum_{i=2}^n g_{ii} \\
&\ \  +(g \sum_{i=1}^n g_{ii11}+2g_1 g_{111}+2g_1 \sum_{i=1}^n g_{ii1}+g_{11} \sum_{i=1}^n g_{ii}+2g_{11}^2 ) ,\\
\end{split}
\end{equation}

and for \(k \neq 1\) 
\begin{equation}
\begin{split}
X_{kk}&=g_1^2 \sum_{i=2}^n g_{iikk} +2g_1 (\sum_{i\neq 1,k} g_{ii} -g_{kk}) g_{1kk} +2g_{kk}^2 \sum_{i\neq k} g_{ii} \\
&\ \  +(g \sum_{i=1}^n g_{iikk}+2g_1 g_{1kk}+g_{kk} \sum_{i=1}^n g_{ii}+2g_{kk}^2 ) .\\
\end{split}
\end{equation}

For \(k \neq 1\)
\begin{equation}
\begin{split}
X_{1k}&=g_1^2 \sum_{i=2}^n g_{ii1k}+2g_1 g_{11}\sum_{i=1}^n g_{iik} +2g_1 (\sum_{i=1,k}^n g_{ii} -g_{11} ) g_{11k}  \\
&\ \ +(g \sum_{i=1}^n g_{ii1k}+2g_1 g_{11k}+g_1 \sum_{i=1}^n g_{iik} ) ,\\
\end{split}
\end{equation}

and for \(2 \leq k,l \leq n\) such that \(k \neq l \)
\begin{equation}
\begin{split}
X_{kl}&=g_1^2 \sum_{i=2}^n g_{iikl}+2g_1 \sum_{i\neq1,k,l} g_{ii} g_{1kl} +(g \sum_{i=1}^n g_{iikl}+2g_1 g_{1kl}) .\\
\end{split}
\end{equation}

In our coordinate system at \(P_0\), the quantity \(J=|\nabla g|^2+g\) becomes \(J=g_1^2+g\). Then the term \(\frac{J}{g}\) goes to \(+\infty\) as \(g\) tends to \(0^+\) and \(\Delta f =g_1^2 +g g_{11} + g \sum_{i \neq 1 } g_{ii}\) is bounded so that \(0\leq f_{11}=g g_{11}+g_1^2 \leq C\) and \(0\leq f_{ii}=g g_{ii}\leq C\) for \(i \neq  1\). We have the relations \(0=X_m= \sum_{i \neq 1} (g_1^2+g)g_{iim}+g g_{11m}\) for \(m \neq 1\) and \(0=X_1=(g_1^2 +g) \sum_{ i\neq 1} g_{ii1}+g g_{111}+g_1 (1+2g_{11})\sum_{ i \neq 1} g_{ii}+3g_1 g_{11}\), so that
\begin{equation}
\begin{split}
 J\sum_{i \neq 1} g_{ii1}&=-\big(g g_{111} +(g_1+2g_1 g_{11})\sum_{i \neq 1} g_{ii}+3g_1 g_{11} \big) , \\
J \sum_{i \neq 1}g_{iik}&=-g g_{11k} , \text{ for } k \neq 1 .\\
\end{split}
\end{equation}

As a result, we also have relations
\begin{equation}
\begin{split}
g_1 g_{11} \sum_{i \neq 1} g_{ii} g g_{111}&=-Jg_1 g_{11} \sum_{j \neq 1} g_{jj} \sum_{i \neq 1} g_{ii1} -g_1 g_{11} \sum_{j \neq 1} g_{jj}\big((g_1+2g_1 g_{11})\sum_{i \neq 1} g_{ii}+3g_1 g_{11} \big) , \\
g_1 g g_{111}&=-J g_1 \sum_{i \neq 1} g_{ii1} -g_1 \big((g_1+2g_1 g_{11})\sum_{i \neq 1} g_{ii}+3g_1 g_{11} \big) . \\
\end{split}
\end{equation}

We use the finite and non-degenerate speed of the level set of \(g\) to bound the term \(\Big( \sum_{i \neq 1} \frac{g_1^2} {g}  g_{ii} \Big)^2\) above. Previously, we obtained the result that for some sufficiently small constant \(0<C<1\),
\begin{equation}\begin{split}
C&\leq \frac{g_t}{g_1}=\frac{1}{2 g_1 \sqrt{I}}g \sum_{ 2 \leq i ,j } (g_{ii} g_{jj}-g_{ij}^2)+\frac{1}{g_1 I^{3/2} } ( g g_{11} \sum_{i \neq 1} g_{ii}+g_1^2 \sum_{i \neq 1} g_{ii}) \leq C^{-1}.
\end{split}\end{equation}
and we also see that, by the boundedness of \(g_1\), 
\begin{equation}\begin{split}
C&\leq \frac{1}{2 \sqrt{I}}g \sum_{2 \leq i , j } (g_{ii} g_{jj}-g_{ij}^2)+\frac{1}{I^{3/2} } ( g g_{11} \sum_{i \neq 1} g_{ii}+g_1^2 \sum_{i \neq 1} g_{ii}) \leq C^{-1}.
\end{split}\end{equation}

This also implies that
\begin{equation}\begin{split}
\frac{1}{2 \sqrt{I} } g R_{g,2} +\frac{1}{I^{3/2} } g_1^2 \sum_{i \neq 1} g_{ii} \leq C^{-1},
\end{split}\end{equation}

\begin{equation}\begin{split}
\frac{1}{2 I^{3/2} } g R_{g,2} +\frac{1}{I^{3/2} } g_1^2 \sum_{i \neq 1} g_{ii}  \leq C^{-1},
\end{split}\end{equation}
and
\begin{equation}\begin{split}
0&\leq \frac{1}{I^{3/2} } ( g g_{11}+g_1^2) \sum_{i \neq 1} g_{ii}  \leq C^{-1}.
\end{split}\end{equation}

Since \(\Delta f\) is bounded and \(X =g_1^2 \sum_{i \neq 1} g_{ii} +\Delta f \) in our coordinates at \(P_0\), without loss of generality we may assume that 
\begin{equation}
C^{-1} \leq \frac{1}{2I^{3/2} } g_1^2 \sum_{i \neq 1} g_{ii} \leq X \leq 2g_1^2 \sum_{i \neq 1} g_{ii},
\end{equation}
because, if \(\frac{1}{2I^{3/2} } g_1^2 \sum_{i \neq 1} g_{ii}\leq  C^{-1} \), then \(X\) is bounded above and we get the upper bound of \(\sum_{i \neq 1} g_{ii}\), which is our goal. \\

So we have
\begin{equation}
\begin{split}
0\leq \frac{1}{I^{3/2} } g_1^2 \sum_{i \neq 1} g_{ii} \leq C^{-1}-\frac{1}{2 \sqrt{I}}g \sum_{2 \leq i , j } (g_{ii} g_{jj}-g_{ij}^2)-\frac{1}{I^{3/2} } g g_{11} \sum_{i \neq 1} g_{ii}
\end{split}
\end{equation}
and
\begin{equation}
\begin{split}
\frac{1}{2 \sqrt{I}}g \sum_{ 2 \leq i , j } (g_{ii} g_{jj}-g_{ij}^2)+\frac{1}{I^{3/2} } g g_{11} \sum_{i \neq 1} g_{ii} \leq C^{-1} -\frac{1}{I^{3/2} } g_1^2 \sum_{i \neq 1} g_{ii} \leq -\frac{1}{2I^{3/2} } g_1^2 \sum_{i \neq 1} g_{ii} .
\end{split}
\end{equation}

Hence, we get
\begin{equation}
\begin{split}
0 \leq \frac{1}{2I^{3/2} } g_1^2 \sum_{i \neq 1} g_{ii} \leq -\frac{1}{2 \sqrt{I}}g \sum_{ 2 \leq i , j } (g_{ii} g_{jj}-g_{ij}^2)-\frac{1}{I^{3/2} } g g_{11} \sum_{i \neq 1} g_{ii} ,
\end{split}
\end{equation}

\begin{equation}
\begin{split}
0 \leq  \frac{g_1^2}{g} \sum_{i \neq 1} g_{ii} \leq -I \sum_{ 2 \leq i , j } (g_{ii} g_{jj}-g_{ij}^2) -2g_{11} \sum_{i \neq 1} g_{ii} =-IR_{g,2}+2g^2 g_1^2 g_{11} \sum_{i \neq 1} g_{ii}.
\end{split}
\end{equation}
In particular, since the level set of \(g\) is convex, we get
\begin{equation}
\begin{split}
2 g_{11} \sum_{i \neq 1} g_{ii} \leq  -I \sum_{ 2 \leq i , j } (g_{ii} g_{jj}-g_{ij}^2) \leq 0,
\end{split}
\end{equation}

\begin{equation}
\begin{split}
IR_{g,2} \leq 2g^2 g_1^2 g_{11} \sum_{i \neq 1} g_{ii} \leq 0
\end{split}
\end{equation}
so that \(g_{11} \leq 0\) and \(R_{g,2} \leq 0 \). 

As \(g\) tends to zero, \(LX\) at \(P_0\) becomes
\begin{align*}
&LX:=X_t-a_{ij} D_{ij} X = -\big( 31J -20g I \big) \frac{1}{20 J^2 I^{3/2}} g^3 g_{111}^2 -\frac{1}{J I^{3/2}} \big( 2g J^2 +g^2 (4J -I g ) \big) \sum_{i \neq 1} g_{i11}^2 \\ 
&\ -\frac{1}{J \sqrt{I}} g \sum_{i,j,k \neq 1} (J g_{ijk})^2 -\frac{1}{J^2 \sqrt{I}} g^2 \sum_{i,j \neq 1} (J g_{ij1})^2 -\frac{2}{I^{3/2}} Jg \sum_{i,j \neq 1, i \neq j} g_{ij1}^2 -\frac{1}{4I^{3/2}} Jg \sum_{i \neq 1} g_{ii1}^2  \\ 
&\ -\frac{1}{20J I^{3/2}} g \Big( \big( g g_{111} -\frac{20J}{I} g_1^3 g \sum_{j \neq 1} g_{jj} \big)^2 +(g g_{111} +5Jg_1 g^2 R_{g, 2})^2  +(g g_{111} -10Jg_1 g g_{11} g \sum_{j \neq 1} g_{jj} )^2  \\
&\ \ +\big(g g_{111} +\frac{90J}{I} g_1 (g g_{11}+g_1^2) g \sum_{j \neq 1} g_{jj} \big)^2 +\big(g g_{111} -\frac{20I}{J} g_1 g \sum_{ i \neq 1} g_{ii} \big)^2 +\big(g g_{111} -\frac{60I}{J} g_1 g g_{11} \big)^2  \\
&\ \ +(g g_{111} +20g_1 \sum_{ i \neq 1} g_{ii} )^2 +(g g_{111} +60g_1 g_{11} )^2  +\big(g g_{111}-\frac{40I}{J} g_1 g g_{11}\sum_{ i \neq 1} g_{ii} \big)^2 \Big)  \\ %
&\ -\frac{1}{8I^{3/2}} Jg \sum_{i \neq 1} \Big( \big( g_{ii1} +\frac{8}{J} g g_1 g^2 g_{11} \sum_{j \neq 1} g_{jj} \big)^2 +\big(g_{ii1} -\frac{24}{J} g_1 g_{11}  \big)^2 +\big(g_{ii1} +\frac{24}{JI} g g_1 f_{11}^2 \big)^2 \\
&\ \ +( g_{ii1} +2g_1 g^2 R_{g,2} )^2 +\big(g_{ii1} +\frac{4}{I} g_1^3 g \sum_{j\neq 1} g_{jj} \big)^2  +\big(g_{ii1} -\frac{4}{I} g^2 g_1^3 g^2 g_{11} \sum_{j \neq 1} g_{jj} \big)^2  +\big(g_{ii1} +\frac{16}{J} g_1 g_{ii}^2 \big)^2  \\
&\ \ +\big(g_{ii1} +\frac{8}{I} g_1 g^2 g_{11} \sum_{j \neq 1} g_{jj} \big)^2 +\big(g_{ii1} -\frac{16I}{J} g_1 \sum_{j \neq 1} g_{jj} g_{ii} \big)^2  +\big( g_{ii1} -\frac{16}{J} g_1 g_{11} g_{ii} \big)^2  \\
&\ \ +\big( g_{ii1} +\frac{24}{J} g_1 g_{ii} \big)^2  +\big(g_{ii1} -\frac{8}{J} g g_1 g^2 g_{11}g_{ii}\big)^2  +(g_{ii1} +16g_1 g^2 \sum_{j \neq 1} g_{jj} g_{ii})^2 +\big(g_{ii1} -\frac{16}{J} g g_1 g^2 g_{ii}^2 \big)^2 \Big) \\ 
&\ +\Bigg( \frac{1}{I^{5/2}} \mathcal{O}(1) +\frac{n-1}{8I^{3/2}} J\mathcal{O}(g) +\frac{1}{8I^{3/2}}\mathcal{O}(g) +\frac{5}{2I^{7/2}} \mathcal{O}(g^2)\Bigg) +\frac{1}{2I^{3/2}} \big( ( J +2g_1^2 ) \sum_{i \neq 1} g_{ii} +\mathcal{O}(1) \big) R_{g,2} \\ 
&\ +\Bigg(  \frac{4}{J^2\sqrt{I}}  \mathcal{O}(1)  g_{11} +\frac{1}{JI^{3/2} } \Big( g_1^2 (g g_{11}) \big(180+ 72(n-1)\big) -2J^2 ( g^3 \sum_{i \neq 1} g_{ii}^3 ) \Big) g_{11} -\frac{3}{I^{3/2} } (g \sum_{j \neq 1} g_{jj}) g_{11}^2 \Bigg)  \\  
&\ + \Bigg( \frac{2}{I^{3/2}} \mathcal{O}(1) \sum_{i \neq 1} g_{ii} +\frac{1}{2I^{3/2}}\mathcal{O}(1) ( g \sum_{i \neq 1} g_{ii}^2 ) +\frac{1}{I^{3/2}}\mathcal{O}(1) (g^3 \sum_{i \neq 1} g_{ii}^4) \\ 
&\ -\frac{1}{I^{3/2}} g_1^2 (4+I) (-g_{11}) \sum_{i \neq 1} g_{ii} -\frac{4}{JI^{3/2} } g_1^2 (-g g_{11}) (\sum_{i \neq 1} g_{ii})^2 -\frac{1}{\sqrt{I} } (-g g_{11}) \sum_{i \neq 1} g_{ii}^2 \\  
&\ -\frac{1}{I^{3/2} } f_{11} \big( 2\sum_{i \neq 1} g_{ii}^2+ (\sum_{i \neq 1} g_{ii} )^2 \big) -\frac{2}{I^{5/2}} g_1^2 (\sum_{i \neq 1} g_{ii})^2 -\frac{1}{\sqrt{I} } (g \sum_{j \neq 1} g_{jj}) \big( (\sum_{i \neq 1} g_{ii})^2 +2\sum_{i \neq 1} g_{ii}^2 \big) \Bigg) \\ 
&\ +\Bigg( \bigg( \frac{1}{I^{3/2} } \mathcal{O}(1) ( \sum_{ i \neq 1} g_{ii} )^2 +\frac{2}{\sqrt{I} } g \sum_{i \neq 1} g_{ii}^3 +\frac{1}{\sqrt{I} } \mathcal{O}(1) \sum_{i \neq 1}  g_{ii}^2  \\ 
&\ +\frac{2}{ I^{3/2} } (Ig g_{11}-J) \sum_{i \neq 1} g_{ii}^3 -\frac{2}{\sqrt{I} } g \sum_{i \neq 1}  g_{ii}^4  -\frac{2}{I^{3/2} } (I g \sum_{k \neq 1} g_{kk}+f_{11} )\sum_{i \neq 1} g_{ii}^2 \sum_{j \neq 1} g_{jj}  \bigg) \\
&\ +\bigg( \frac{2}{I^{3/2}} \mathcal{O}(1) g_{11} \sum_{j\neq 1}g_{jj} +\frac{2}{I^{3/2}} Jg_{11} \sum_{i \neq 1} g_{ii}^2 +\frac{4}{I^{3/2}} g_1^2 g_{11} (\sum_{i \neq 1} g_{ii})^2 \bigg)  \Bigg) \\ 
&\ +\Bigg( \bigg( \frac{1}{JI^{3/2} } \mathcal{O}(1) g_{11} \sum_{i \neq 1} g_{ii}^2 +\frac{1}{I^{5/2}} \mathcal{O}(1) g_{11} ( \sum_{i \neq 1} g_{ii})^2 \bigg)-\frac{1}{I^{3/2} } g g_{11}^2 (\sum_{j \neq 1} g_{ii})^3 \\ 
&\ +\bigg( \frac{1}{JI^{3/2} } \mathcal{O}(1)\sum_{i \neq 1} g_{ii} \sum_{j \neq 1} g_{jj}^2 +\frac{32}{JI^{3/2} } g_1^2 g \sum_{i \neq 1} g_{ii}^4 +\frac{4}{\sqrt{I} } \mathcal{O}(1) \sum_{i \neq 1}  g_{ii}^3 \bigg)-\frac{1}{I^{3/2} } g g_{11}^2 (\sum_{j \neq 1} g_{ii})^3 \Bigg) \leq C \\ 
\end{align*}
for some uniform constant \(C>0\), hence \(X\) satisties
\begin{equation}
X_t\leq LX \leq C
\end{equation}
at its maximum point \(P_0\) at each time, so \(X\) is bounded above. Consequently, \(\sum_{i \neq 1} g_{ii}\) is also bounded above.

\end{proof}

From the inequality (\ref{ineq:f_ii f_jj -f_ij^2}), it holds that for some constant \(C>0\)
\begin{equation}
\sum_{i,j}(f_{ii} f_{jj}-f_{ij}^2)=2\sum_{i \neq 1}(f_{11}f_{ii}-f_{i1}^2)+\sum_{i,j \neq 1}(f_{ii} f_{jj}-f_{ij}^2)  \geq Cg
\end{equation}
which yields
\begin{equation} \label{ineq:f_ii^2+2f_11 f_ii}
\big(\sum_{i \neq 1}f_{ii} \big)^2 +2f_{11}\sum_{i \neq 1}f_{ii} =(\sum_{i \neq 1}f_{ii} +2f_{11}) g\sum_{i \neq 1} g_{ii} \geq Cg.
\end{equation}

As \(\Delta f\) is bounded, there exists some constant \(\widetilde{C}>0\) such that
\begin{equation}
\sum_{i \neq 1}  g_{ii} \geq \widetilde{C},
\end{equation}
which implies the following corollary.
\begin{corollary} \label{bound:g_ii}
Let us assume the conditions in the subsection \ref{subsec:assump}. Then there exists a constant \(C>0\) satisfying
\begin{equation} \label{inequal:g_ii}
C\leq \sum_{i \neq 1} g_{ii} \leq C^{-1}.
\end{equation}
for sufficiently small value of \(g\) near the interface.
\end{corollary}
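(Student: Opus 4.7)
The upper bound is already in hand from the preceding lemma, so the plan is really about extracting the lower bound $\sum_{i\neq 1} g_{ii}\geq C$ from the pointwise estimate on the scalar curvature proved earlier. The starting point is the two-sided control
\begin{equation*}
C \ \leq\ \frac{1}{2g\sqrt{1+|\nabla f|^2}}\sum_{i,j=1}^n (f_{ii}f_{jj}-f_{ij}^2)\ \leq\ C^{-1},
\end{equation*}
from inequality \eqref{ineq:f_ii f_jj -f_ij^2}, which rewrites as $\sum_{i,j}(f_{ii}f_{jj}-f_{ij}^2) \geq c\,g$ for a positive $c$ near the interface.

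I would work in the same local frame used throughout this subsection, namely the one in which $g_1=g_\nu$, $g_i=0$ for $i\neq 1$, and $g_{ij}=0$ for $i\neq j$, so that $f_{ii}=g\,g_{ii}$ for $i\neq 1$ and $f_{11}=g\,g_{11}+g_1^2$. Splitting the sum into $(1,1)$-mixed and purely tangential parts gives
\begin{equation*}
\sum_{i,j}(f_{ii}f_{jj}-f_{ij}^2)\ =\ 2\sum_{i\neq 1}(f_{11}f_{ii}-f_{i1}^2)+\sum_{i,j\neq 1}(f_{ii}f_{jj}-f_{ij}^2),
\end{equation*}
and since in this frame $f_{i1}=0$ for $i\neq 1$ and $f_{ij}=0$ for $i\neq j$ with $i,j\neq 1$, this collapses to
\begin{equation*}
2 f_{11}\sum_{i\neq 1}f_{ii}+\Bigl(\sum_{i\neq 1} f_{ii}\Bigr)^2\ =\ \Bigl(2f_{11}+\sum_{i\neq 1}f_{ii}\Bigr)\,g\sum_{i\neq 1}g_{ii}\ \geq\ c\,g,
\end{equation*}
which is exactly inequality \eqref{ineq:f_ii^2+2f_11 f_ii}.

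At this point the argument is essentially finished: dividing by $g$ yields $\bigl(2f_{11}+\sum_{i\neq 1}f_{ii}\bigr)\sum_{i\neq 1}g_{ii}\geq c$, and since $f\in C^{1,1}$ we have a uniform upper bound on $\Delta f$, so in particular $2f_{11}+\sum_{i\neq 1}f_{ii}\leq 3\,\Delta f\leq C'$. Combining these produces $\sum_{i\neq 1}g_{ii}\geq c/C'=:\widetilde C>0$, which is the desired lower bound. The upper bound is then just a restatement of the previous lemma. I do not expect a serious obstacle here; the only subtle point is that one must be careful that the frame in which the expansion is performed is a valid pointwise frame at each point near the interface, and that the estimate $f_{11}\leq C$ really follows from the $C^{1,1}$ bound on $f$ together with $|\nabla g|\leq C$ and $g$ small, rather than from a separate tangential argument.
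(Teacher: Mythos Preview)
Your argument is correct and matches the paper's proof essentially line for line: the paper derives inequality \eqref{ineq:f_ii^2+2f_11 f_ii} in the same frame, then invokes the $C^{1,1}$ bound on $f$ (hence on $\Delta f$) to extract the lower bound $\sum_{i\neq 1} g_{ii}\geq \widetilde C$, exactly as you do. One minor remark: in the diagonal frame the tangential block $\sum_{i,j\neq 1}(f_{ii}f_{jj}-f_{ij}^2)$ equals $(\sum_{i\neq 1}f_{ii})^2-\sum_{i\neq 1}f_{ii}^2$ rather than $(\sum_{i\neq 1}f_{ii})^2$, but since the omitted term is nonpositive the inequality you need still holds.
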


We can also examine the bounds for the second-order derivatives of \(f\). 
\begin{corollary} \label{bound:D^2 f}
Let us assume the conditions in the subsection \ref{subsec:assump}. Then there exists a constant \(C>0\) satisfying
\begin{equation}
C\leq f_{11} \ , \frac{1}{g}{\sum_{i \neq 1} f_{ii}} \leq C^{-1} \ \text{    and    } \ \sqrt{ \frac{1}{ g }\sum_{i \neq 1}f_{i1}^2}\leq C^{-1}.
\end{equation}
for sufficiently small value of \(g\) near the interface.
\end{corollary}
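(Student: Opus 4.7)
The plan is to work in the local coordinates at the point of interest in which $g_i = g_1 \delta_{1i}$ and $g_{ij}$ is diagonal, exactly as in the proofs of Lemma~\ref{upperbound:Dg} and Corollary~\ref{bound:g_ii}. Since $f = g^2/2$, the chain rule gives $f_i = g g_i$ and $f_{ij} = g_i g_j + g g_{ij}$, which in these coordinates reduces to $f_{11} = g_1^2 + g g_{11}$, $f_{ii} = g g_{ii}$ for $i \neq 1$, and $f_{1i} = g g_{1i}$. Thus the corollary reduces to three separate estimates in terms of $g$.

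First I would handle the middle estimate: $\frac{1}{g}\sum_{i \neq 1} f_{ii} = \sum_{i \neq 1} g_{ii}$, so the two-sided bound is an immediate translation of Corollary~\ref{bound:g_ii}. Next, for the lower bound on $f_{11}$ I would divide the inequality (\ref{ineq:f_ii^2+2f_11 f_ii}), namely
\begin{equation*}
\bigl(\textstyle\sum_{i \neq 1} f_{ii} + 2f_{11}\bigr)\, g \textstyle\sum_{i \neq 1} g_{ii} \;\geq\; C g,
\end{equation*}
by $g$ and use the upper bound $\sum_{i \neq 1} g_{ii} \leq C^{-1}$ to obtain
\begin{equation*}
\textstyle\sum_{i \neq 1} f_{ii} + 2f_{11} \;\geq\; C^2 .
\end{equation*}
Since $\sum_{i \neq 1} f_{ii} = g\sum_{i \neq 1} g_{ii} \to 0$ as $g \to 0^+$ (by the upper bound on $\sum g_{ii}$), the term $2f_{11}$ absorbs at least half of the right-hand side for $g$ small enough, giving $f_{11} \geq C^2/4$.

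Finally, for the bound on $\sqrt{g^{-1}\sum_{i \neq 1} f_{i1}^2}$ I would use the convexity of $f$: since $D^2 f \geq 0$, each $2\times 2$ principal minor is nonnegative, so $f_{11} f_{ii} - f_{1i}^2 \geq 0$ for every $i \neq 1$. Summing gives $\sum_{i \neq 1} f_{1i}^2 \leq f_{11}\sum_{i \neq 1} f_{ii}$, and dividing by $g$ yields
\begin{equation*}
\frac{1}{g}\sum_{i \neq 1} f_{1i}^2 \;\leq\; f_{11}\cdot \frac{1}{g}\sum_{i \neq 1} f_{ii} \;=\; f_{11}\sum_{i \neq 1} g_{ii},
\end{equation*}
which is bounded since $f_{11}$ is bounded above (by $C^{1,1}$ regularity of $f$) and $\sum_{i \neq 1} g_{ii}$ is bounded above by Corollary~\ref{bound:g_ii}. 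Taking square roots completes the proof. There is no genuine obstacle here; the only subtle point is verifying that, in passing from the inequality for $f$ to one for $f_{11}$ alone, the auxiliary term $\sum_{i \neq 1} f_{ii}$ can be absorbed, which uses precisely the upper bound on $\sum g_{ii}$ already established in Corollary~\ref{bound:g_ii}.
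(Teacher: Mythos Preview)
Your proof is correct and follows essentially the same route as the paper's: both use $f_{ii}=g g_{ii}$ for $i\neq 1$ to translate Corollary~\ref{bound:g_ii} into the middle bound, invoke the $2\times 2$ principal minors of the convex Hessian $D^2f$ together with the $C^{1,1}$ upper bound on $f_{11}$ to control $\sum_{i\neq 1}f_{1i}^2$, and extract the lower bound on $f_{11}$ from inequality~(\ref{ineq:f_ii^2+2f_11 f_ii}) using that $\sum_{i\neq 1}f_{ii}=O(g)$. The only cosmetic difference is the order in which the three estimates are established.
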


\begin{proof}
From the inequality (\ref{inequal:g_ii}), we see that
\begin{equation} \label{inequal:f_ii}
C \leq \frac{1}{g}{\sum_{i \neq 1} f_{ii}} \leq C^{-1}.
\end{equation}

With the convexity of \(f\) and the boundedness \(0\leq f_{11} \leq C^{-1} \), and the inequality (\ref{ineq:f_ii^2+2f_11 f_ii}) implies
\begin{equation}
\sum_{i \neq 1}f_{i1}^2 \leq \sum_{i \neq 1} f_{11}f_{ii} \leq C^{-1} g.
\end{equation}

Thus
\begin{equation}
\sqrt{ \frac{1}{ g }\sum_{i \neq 1}f_{i1}^2}\leq C^{-1}.
\end{equation}

Since \(Cg^2 \leq \big( {\sum_{i \neq 1} f_{ii}} \big)^2 \leq C^{-1}g^2\) because of the inequality (\ref{inequal:f_ii}) as \(g\) tends to \(0\) near the interface, we see that
\( f_{11} \geq C\) from the inequality (\ref{ineq:f_ii^2+2f_11 f_ii}).
\end{proof}

Let \(P=(x,t) \) where \( x \in \Omega(t)=\{ x \in \mathbb{R}^n ; |\nabla f| < \infty \}\). Then we can define the set \(\Omega_P (t)=\{ x \in \Omega(t) ; f(x,t) \leq f(P) \}\) and the level curve \( \Gamma_P (t)=\partial \Omega_P (t) \) for \(P\). Note that the interface is \(\Gamma(t)=\partial \Omega(t)\). Let \(\nu(P)\) be the normal vector to  \( \Gamma_P (t)\) at \(P\) and \(\bf{P}\) be the position vector of \(P\) with respect to the origin \(O\). Then, using the condition that the disk \(B(0,\rho_0)\) is always contained in the interior of the interface \(\Gamma(t)\), we get the following result.

\begin{lemma}  \label{bound:support'}
Let us assume the conditions in the subsection \ref{subsec:assump}. Then we have
\begin{equation}
\textbf{P} \cdot \nu (P) \geq \rho_0, \ \ \ \text{for any } P=(x,t) \text{ where } x \in \Omega(t), \ 0 \leq t \leq T .
\end{equation}
\end{lemma}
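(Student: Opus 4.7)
The plan is to exploit convexity of the sublevel sets of $f$ together with the hypothesis $B(0,\rho_0)\subset \Sigma_0(t)$ via a supporting hyperplane argument. The scalar curvature flow preserves convexity (this was used earlier in the paper to guarantee $f_t\geq 0$), so for every $0\leq t\leq T$ the function $f(\cdot,t)$ is convex on $\Omega(t)$. Consequently, its sublevel set
\begin{equation}
\Omega_P(t)=\{x\in\Omega(t):f(x,t)\leq f(P)\}
\end{equation}
is a convex subset of $\mathbb{R}^n$ whose boundary is the level curve $\Gamma_P(t)$, and the unit vector $\nu(P)$ is by definition the outer normal to $\Omega_P(t)$ at $P\in\Gamma_P(t)$ (it points in the direction of increasing $f$).

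Next I would identify two points known to lie in $\Omega_P(t)$. First, since $f(P)\geq 0$ and $f\equiv 0$ on the flat side $\Sigma_0(t)$, the assumption $D_{\rho_0}=B(0,\rho_0)\subset \Sigma_0(t)$ from Subsection~\ref{subsec:assump} gives $B(0,\rho_0)\subset \Omega_P(t)$. Second, $P\in\partial\Omega_P(t)\subset\overline{\Omega_P(t)}$.

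By convexity, the tangent hyperplane to $\Omega_P(t)$ at $P$ is a supporting hyperplane, i.e.\ $(y-\mathbf{P})\cdot\nu(P)\leq 0$ for all $y\in\Omega_P(t)$, equivalently
\begin{equation}
y\cdot\nu(P)\leq \mathbf{P}\cdot\nu(P)\quad\text{for all }y\in\Omega_P(t).
\end{equation}
I then specialize to $y=\rho_0\,\nu(P)\in B(0,\rho_0)\subset\Omega_P(t)$, which yields
\begin{equation}
\rho_0 = \rho_0\,\nu(P)\cdot\nu(P)\leq \mathbf{P}\cdot\nu(P),
\end{equation}
giving the claimed inequality.

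The only subtlety I foresee is a boundary issue in case $P$ lies on the interface $\Gamma(t)$ itself, where the gradient $\nabla f$ vanishes and $\nu(P)$ must be interpreted as the (unique, by non-degeneracy) outer normal to the convex domain $\Omega(t)$; this still makes the supporting hyperplane inequality valid. Apart from checking this, the argument is purely geometric and does not require any of the delicate PDE estimates of the previous sections.
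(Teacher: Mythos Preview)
Your argument is correct and is exactly the standard supporting-hyperplane computation that the paper leaves implicit: the paper states the lemma without proof, merely remarking that it follows from the hypothesis $D_{\rho_0}\subset\Sigma_0(t)$, and your proof supplies precisely that geometric justification. One cosmetic point: when you specialize to $y=\rho_0\,\nu(P)$ you have $|y|=\rho_0$, so $y$ lies in the \emph{closed} disc $D_{\rho_0}=\{|x|\le\rho_0\}$ rather than the open ball; this is consistent with the paper's assumption, but you should write $D_{\rho_0}$ rather than $B(0,\rho_0)$ there.
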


Let \(P_0=(x,t_0)\) where \( x\) an arbitrary point on the interface \(\Gamma(t_0)\) with \(0 < t_0 \leq T\). We define its position vector \(\bf{P_0}\) and normalize it with \(n_0 =\frac{\bf{P_0}}{|\bf{P_0}|}\). Then we obtain the following.

\begin{lemma} \label{bound:support}
Let us assume the conditions in the subsection \ref{subsec:assump}. Then there exists positive constants \(\eta\) and \(\gamma\) such that
\begin{equation}
n_0 \cdot \nu (P)\geq \gamma, \ \ \ \text{for any } P=(x,t),  \text{ where } x \in \Omega(t),\ |P-P_0|\leq \eta,\  0\leq t \leq t_0.
\end{equation}
\end{lemma}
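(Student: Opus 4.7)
The plan is to deduce the localized lower bound on $n_0 \cdot \nu(P)$ by combining the global estimate from Lemma \ref{bound:support'} at the reference point $P_0$ with a continuity argument for the normal vector near $P_0$.

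First, I would apply Lemma \ref{bound:support'} at $P=P_0$ to obtain $\mathbf{P_0} \cdot \nu(P_0) \geq \rho_0$. Since the domain is contained in the ball $B_R$ by the assumption in Subsection \ref{subsec:assump}, we have $|\mathbf{P_0}| \leq R$, so dividing both sides by $|\mathbf{P_0}|$ yields
\begin{equation}
n_0 \cdot \nu(P_0) \geq \frac{\rho_0}{R} > 0.
\end{equation}
This gives a strictly positive lower bound at the single point $P_0$.

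Next, I would propagate this to a neighborhood by continuity of the unit normal $\nu(P) = \nabla g(P)/|\nabla g(P)|$. The point $P_0$ lies on the interface $\Gamma(t_0)$ where $g = 0$, but by Lemma \ref{lowerbound:Dg} the gradient satisfies $|\nabla g| \geq c > 0$ uniformly on $\{0 \leq g \leq 1\} \times [0,T]$, so the denominator stays bounded away from zero near $P_0$. Since $\nabla g$ is continuous in $(x,t)$ up to the interface (as $f$ is $C^{1,1}$ and $g = \sqrt{2f}$ is smooth in the interior with $\nabla g$ admitting a continuous extension by the non-degeneracy condition), the map $P \mapsto \nu(P)$ is continuous at $P_0$. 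Therefore there exists $\eta > 0$ such that
\begin{equation}
|\nu(P) - \nu(P_0)| \leq \frac{\rho_0}{2R}, \qquad \text{for all } |P - P_0| \leq \eta \text{ with } x \in \Omega(t),\ 0 \leq t \leq t_0.
\end{equation}
Combining with the pointwise bound from the first step gives
\begin{equation}
n_0 \cdot \nu(P) = n_0 \cdot \nu(P_0) + n_0 \cdot (\nu(P) - \nu(P_0)) \geq \frac{\rho_0}{R} - \frac{\rho_0}{2R} = \frac{\rho_0}{2R},
\end{equation}
so we can take $\gamma = \rho_0/(2R)$.

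The principal technical point is establishing the continuity of $\nu(P)$ up to and across the interface, because $P_0$ lies precisely on $\Gamma(t_0)$ where $\nabla f$ vanishes. This is handled by working with the pressure-like function $g$ rather than $f$: since the rescaled gradient $|\nabla g|$ is uniformly bounded below near the interface by Lemma \ref{lowerbound:Dg}, the normal $\nu = \nabla g/|\nabla g|$ extends continuously to $\Gamma(t)$ in both $x$ and $t$, which is what allows the neighborhood argument to close.
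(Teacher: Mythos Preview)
Your argument is correct, but it differs from the paper's in which quantity you perturb by continuity. The paper applies Lemma \ref{bound:support'} at the \emph{general} point $P$, obtaining $\frac{\mathbf{P}}{|\mathbf{P}|}\cdot\nu(P)\geq C^{-1}\rho_0$ for every $P$, and then observes that the unit position vector $\mathbf{P}/|\mathbf{P}|$ is close to $n_0=\mathbf{P_0}/|\mathbf{P_0}|$ when $|P-P_0|\leq\eta$ is small (this is trivial continuity of the position, using $|\mathbf{P}|\geq\rho_0$). You instead apply Lemma \ref{bound:support'} only at $P_0$ and then perturb the normal $\nu(P)$ by continuity of $\nabla g/|\nabla g|$. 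Your route works given the standing hypothesis that $g\in C^{2+\beta}$ up to the interface (which you should cite directly rather than arguing through $f\in C^{1,1}$), and it yields the same constant $\gamma=\rho_0/(2R)$. The paper's route is a bit more elementary because it avoids any appeal to regularity of the normal field near the interface; uniformity of $\eta$ over all choices of $P_0$ is immediate from the uniform lower bound $|\mathbf{P}|\geq\rho_0$, whereas in your argument uniformity of $\eta$ relies on a uniform modulus of continuity for $\nabla g$, which is available but is an extra ingredient.
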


\begin{proof}
Note that \(|\textbf{P}|\leq C\) for some constant \(C>0\) which depends on initial data. With Lemma \ref{bound:support'}, it follows that
\begin{equation}
\frac{1}{|\textbf{P}|} \textbf{P} \cdot \nu (P) \geq C^{-1} \rho_0.
\end{equation}
Setting \(\gamma=C^{-1} \rho_0/2\) and choosing \(\eta\) sufficiently small depending on \(\rho_0\) and \(\gamma\), we get the desired result.
\end{proof}

Consequently, the following result holds for the derivatives of \(g\) and \(f\) in the direction \(n_0=\frac{\bf{P_0}}{|\bf{P_0}|}\) for any point \(P_0 =(x_0 ,t) \) where \( x_0 \in \Gamma(t)\) on the interface.
\begin{lemma}
There exist positive constants \(C\) and \(\eta\) depending only on initial data and the constant \(\rho_0\), satisfying
\begin{align}
C\leq g_{n_0} \leq C^{-1}, \\
C\leq f_{n_0 n_0} \leq C^{-1},
\end{align}
for all \(P=(x,t) \) where \( x \in \Omega(t)\), \(|P-P_0|\leq \eta,\  0\leq t \leq t_0\).
\end{lemma}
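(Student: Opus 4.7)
The plan is to reduce both bounds to a pointwise calculation in an orthonormal frame adapted to the level sets of $g$, where the preceding gradient and Hessian estimates apply directly. For each $P=(x,t)$ with $g(x,t)>0$ in the neighborhood $|P-P_0|\le \eta$, I would pick a local orthonormal frame $\{e_1,\dots,e_n\}$ at $P$ with $e_1=\nu(P)$ so that $g_i(P)=0$ for $i\neq 1$ and $g_1(P)=|\nabla g|(P)$, and further rotate $\{e_2,\dots,e_n\}$ to diagonalize the tangential Hessian, so $g_{ij}(P)=0$ for $2\le i\ne j\le n$. Then decompose the fixed vector $n_0$ as $n_0=\alpha e_1+\tau$, with $\tau\in\operatorname{span}\{e_2,\dots,e_n\}$, $|\tau|^2=1-\alpha^2\le 1$, and $\alpha=n_0\cdot \nu(P)\ge \gamma$ by Lemma \ref{bound:support}, provided $\eta$ is chosen no larger than the $\eta$ there.

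For the first estimate, simply note that
\[
g_{n_0}(P)=\nabla g(P)\cdot n_0 = g_1(P)\,\alpha.
\]
Lemmas \ref{upperbound:Dg} and \ref{lowerbound:Dg} give $C\le g_1\le C^{-1}$ uniformly for small $g$, and $\gamma\le \alpha\le 1$, so $C\gamma\le g_{n_0}\le C^{-1}$, as required.

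For the second estimate, expand in the frame:
\[
f_{n_0 n_0}=\alpha^2 f_{11}+2\alpha\sum_{i\ne 1}\tau_i f_{1i}+\sum_{i\ne 1}\tau_i^2 f_{ii},
\]
where the cross-tangential terms vanish because $f_{ij}=g g_{ij}=0$ for $i\ne j$, $i,j\ne 1$ in our diagonalizing frame. Using $f=g^2/2$, we have $f_{11}=g_1^2+g g_{11}$, $f_{1i}=gg_{1i}$ ($i\ne 1$), $f_{ii}=gg_{ii}$ ($i\ne 1$). Corollary \ref{bound:D^2 f} supplies $C\le f_{11}\le C^{-1}$, $\sum_{i\ne 1}f_{1i}^2\le C^{-2}g$, and $\sum_{i\ne 1}f_{ii}\le C^{-1}g$, while convexity of the level sets gives $g_{ii}\ge 0$ so that $f_{ii}\ge 0$. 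By Cauchy–Schwarz, the cross term is bounded by $2|\tau|\bigl(\sum_{i\ne 1} f_{1i}^2\bigr)^{1/2}\le 2C^{-1}\sqrt{g}$, and the tangential term by $|\tau|_\infty^2\sum_{i\ne 1}f_{ii}\le C^{-1}g$. Therefore
\[
\alpha^2 f_{11}-2C^{-1}\sqrt{g}-C^{-1}g\ \le\ f_{n_0 n_0}\ \le\ \alpha^2 f_{11}+2C^{-1}\sqrt{g}+C^{-1}g.
\]
Since $g$ is continuous and $g(P_0)=0$ (because $x_0\in\Gamma(t_0)$), the Lipschitz-in-time and $C^{1,1}$-in-space regularity of $f$ produced in Section \ref{sec:speed} implies $g(x,t)\to 0$ as $(x,t)\to P_0$; so one can shrink $\eta$ so that $g(P)$ is small enough for the error terms to be dominated by $\tfrac{1}{2}\gamma^2 C$, yielding $\tfrac{1}{2}\gamma^2 C\le f_{n_0 n_0}\le 2C^{-1}$.

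The main obstacle I anticipate is really a bookkeeping one rather than a conceptual one: the adapted frame depends on $P$, whereas $n_0$ is fixed, and one must make sure the angular bound $\alpha\geq \gamma$ (coming from Lemma \ref{bound:support}) and the Hessian bounds of Corollary \ref{bound:D^2 f} hold \emph{simultaneously} across the whole neighborhood $|P-P_0|\le \eta$. Both can be arranged by taking $\eta$ smaller than the $\eta$ of Lemma \ref{bound:support} and small enough that $g(P)$ stays inside the "small $g$ near the interface" regime in which Corollary \ref{bound:D^2 f} is stated. A minor point is that on the flat side the derivatives $g_{n_0},\,f_{n_0 n_0}$ vanish; the lemma should therefore be read (as is standard in this kind of free boundary problem) as a statement about $P$ in the strictly convex region $\{g>0\}$, which is the regime in which the previous estimates have been proved.
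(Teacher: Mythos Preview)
Your proposal is correct and follows essentially the same approach as the paper: decompose $n_0$ into its normal and tangential components at $P$, invoke the angular lower bound $n_0\cdot\nu(P)\ge\gamma$ from Lemma~\ref{bound:support}, and feed in the first- and second-derivative bounds from Lemmas~\ref{upperbound:Dg}, \ref{lowerbound:Dg} and Corollary~\ref{bound:D^2 f} so that the $O(\sqrt{g})$ and $O(g)$ error terms are absorbed by shrinking $\eta$. The only cosmetic difference is that the paper projects $n_0$ onto a single tangential direction $\tau(P)$, while you work with the full tangential component in a frame diagonalizing the tangential Hessian; also, your justification ``convexity of the level sets gives $g_{ii}\ge 0$'' is more directly argued via convexity of $f$ (since $f_{ii}=gg_{ii}\ge 0$ for $i\neq 1$), but the conclusion is the same.
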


\begin{proof}
Let \(\tau(P)\) be the unit vector in the direction of the tangential projection of \(n_0\) and \(\theta\) be the angle between \(n_0\) and the outward normal \(\nu(P)\), for \(P=(x,t) \) where \( x \in \Omega(t)\). Then we get expressions

\begin{align}
g_{n_0}&=\sin \theta g_{\tau} +\cos\theta g_{\nu}, \\
f_{n_0 n_0}&= \cos^2 \theta f_{\nu \nu} +2 \cos \theta \sin \theta f_{\nu \tau} +\sin^2 \theta f_{\tau \tau}.
\end{align}

By Lemma \ref{bound:support},

\begin{equation}
\cos \theta =n_0 \cdot \nu \geq \gamma >0
\end{equation}
for any \(P =(x,t) \) where \( x \in \Omega(t)\) with \(|P-P_0| \leq \eta\). Hence, the desired result follows by the following bounds from Corollary \ref{bound:D^2 f}:

\begin{equation}
C\leq f_{\nu \nu} \ ,\ \frac{1}{g} f_{\tau \tau} \leq C^{-1} \text{   and   } \frac{1}{ \sqrt{ g} } |f_{\tau \nu}|\leq C^{-1}
\end{equation}
provided that \(\eta\) is sufficiently small.

\end{proof}

\begin{lemma}
Let us assume the conditions in the subsection \ref{subsec:assump}. Then there exists a uniform constant \(C>0\) satisfying
\begin{equation}
R_{g,2} \geq -C
\end{equation}
for any sufficiently small value of \(g\) near the interface.
\end{lemma}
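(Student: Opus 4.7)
The plan is to apply the parabolic maximum principle to $R_{g,2}$ itself on the strip $\mathcal{N}_\delta = \{(x,t) : 0 < g(x,t) \le \delta,\ 0 \le t \le T\}$ for a small $\delta > 0$. The preceding lemma already gives the companion upper bound $R_{g,2} \le 0$, so only a uniform lower bound is missing, and the approach is in the spirit of the classical Aronson-Bénilan estimate $\Delta p \ge -C/t$ for the porous medium equation: derive a parabolic inequality satisfied by $R_{g,2}$ that forces it to be bounded below at any interior minimum.

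First I would compute the evolution equation for $R_{g,2}$ by differentiating $R_{g,2} = \sum_{i,j}(g_{ii}g_{jj} - g_{ij}^2)$ in time and substituting the identity (\ref{eq:g_mp}) for each $(g_{mp})_t$. This yields a parabolic equation of the schematic form
\begin{equation*}
(R_{g,2})_t = L[R_{g,2}] + Q(D^3 g; D^2 g, Dg, g) + \mathcal{L}(D^2 g, Dg, g),
\end{equation*}
where $L$ is (up to a factor) the same second-order linear operator as in Lemma \ref{lem:g_m}, $Q$ is quadratic in the third derivatives of $g$ with coefficients depending on the lower-order data, and $\mathcal{L}$ collects the remaining terms involving only derivatives of order at most two. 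Working with the smooth approximants $g_\epsilon$ (which are strictly positive), assume that $R_{g_\epsilon, 2}$ attains its infimum over $\mathcal{N}_\delta$ at an interior point $P_0$, and adopt local coordinates at $P_0$ with $g_i = g_1 \delta_{1i}$ and $g_{ij} = 0$ for $i \ne j$. The first-order conditions $\partial_k R_{g,2}(P_0) = 0$ produce linear identities among the $g_{ijk}$, analogous to the relations $J\sum_{i \ne 1} g_{iik} = -g\,g_{11k} + \text{(lower order)}$ derived in the previous lemma, which eliminate certain combinations of third derivatives, while ellipticity of $L$ together with the second-order minimum condition $\nabla^2 R_{g,2} \ge 0$ yields $L[R_{g,2}](P_0) \ge 0$.

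Combining these with the uniform bounds $c \le g_1 \le c^{-1}$, $c \le \sum_{i \ne 1} g_{ii} \le c^{-1}$, $g_{11} \le 0$, $R_{g,2} \le 0$, and $c \le g_t \le c^{-1}$ established in the previous sections, one reorganizes $Q$ in the sum-of-squares fashion of the preceding lemma and absorbs the $\mathcal{L}$-contributions by the resulting positive pieces. The goal is to reach a coercive inequality of the form
\begin{equation*}
(R_{g,2})_t(P_0) \ge c_0\,R_{g,2}(P_0)^2 - C_0
\end{equation*}
for positive uniform constants $c_0, C_0$; since $(R_{g,2})_t(P_0) \le 0$ at an interior minimum, this immediately forces $|R_{g,2}(P_0)| \le \sqrt{C_0/c_0}$. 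The parabolic boundary of $\mathcal{N}_\delta$ is treated separately: on $\{g = \delta\}$ the hypersurface is uniformly strictly convex (the flat side stays at positive distance), so standard interior parabolic regularity supplies uniform $C^2$ bounds and hence a uniform bound on $R_{g,2}$; at $t = 0$ the initial data is smooth by assumption. Combining these with the interior maximum-principle argument and letting $\epsilon \to 0$ yields $R_{g,2} \ge -C$ on $\mathcal{N}_\delta$ for a uniform constant $C$.

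The main obstacle is the algebraic reorganization of $Q + \mathcal{L}$ into the form $c_0 R_{g,2}^2 - C_0$ (plus a non-negative remainder). The full expansion of $(R_{g,2})_t$ coming from (\ref{eq:g_mp}) contains many terms whose signs are delicate, and completing squares among the cubic-in-$D^3 g$ terms while keeping the coercivity constant $c_0$ uniform in the approximation parameter $\epsilon$ requires the same patient case analysis as in the previous lemma. In particular, the non-degeneracy bounds $g_1 \ge c > 0$ and $\sum_{i \ne 1} g_{ii} \ge c > 0$ near the interface are the crucial ingredients that make the square-completion close up with uniform constants, and without them the coercive quadratic in $R_{g,2}$ would degenerate as $g \to 0^+$.
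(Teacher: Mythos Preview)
Your approach diverges from the paper's in a substantive way. The paper does \emph{not} run the maximum principle on $R_{g,2}$ itself. Instead it introduces an auxiliary quantity with an exponential barrier term,
\[
X=\frac{2\sum_{i\neq 1}(g_{11}g_{ii}-g_{1i}^2)}{\sum_{i,j}(g_i^2 g_{jj}+g_j^2 g_{ii}-2g_ig_jg_{ij})}+\exp\bigl(b|\nabla g|^2\bigr),
\]
which at the minimum point (in adapted coordinates) becomes $X=g_{11}/g_1^2+\exp(bg_1^2)$. The free parameter $b$ is taken large so that $X>0$ initially, and the barrier contributes a dominant positive term $\sim b\,g_1^4\sum_{j\neq 1}g_{jj}\exp(2bg_1^2)$ in the evolution of $X$ that swallows all the bad pieces near a first-time zero. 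A lower bound on $X$ then gives a lower bound on $g_{11}$, and the corollary converts this into the bound on $R_{g,2}$ via the already-established control of $\sum_{i\neq 1}g_{ii}$.

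Two points where your outline is vulnerable. First, the assertion that ``the preceding lemma already gives the companion upper bound $R_{g,2}\le 0$'' is not correct: that inequality appears in the paper only under the contrapositive assumption that $\sum_{i\neq 1}g_{ii}$ is large at the maximum of the earlier auxiliary quantity; the actual conclusion there is that $|gR_{g,2}|$ is bounded, and $|R_{g,2}|$ is bounded only when $R_{g,2}\le 0$. Second, and more seriously, you aim for a coercive inequality $(R_{g,2})_t\ge c_0 R_{g,2}^2-C_0$ with $c_0$ uniform in $g$, but you do not exhibit where the quadratic $c_0 R_{g,2}^2$ comes from. In the paper's computation the positive terms that close the argument come precisely from the barrier $\exp(bg_1^2)$ with $b$ at one's disposal; without such a device the evolution of $R_{g,2}$ alone does not obviously produce a non-degenerate $R_{g,2}^2$ coefficient as $g\to 0^+$, and the ``square-completion'' step you defer is exactly the place where the direct approach risks collapsing. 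If you want to pursue your route, you would need either to locate an explicit $c_0>0$ surviving the $g\to 0$ limit, or to modify $R_{g,2}$ by a barrier of your own.
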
 

\begin{proof}
To analyze $R_{g, 2}=\sum_{i,j =1} (g_{ii} g_{jj}-g_{ij}^2) =\sum_{i,j \neq 1} (g_{ii} g_{jj}-g_{ij}^2)+2\sum_{i \neq 1}( g_{11} g_{ii}-g_{1i}^2)$, we define a quantity
\begin{equation}
X=\frac{2\sum_{i \neq 1}(g_{11} g_{ii}-g_{1i}^2)}{\sum_{i,j} (g_i^2 g_{jj}+ g_j^2 g_{ii}-2g_i g_j g_{ij} )} +\exp{(b |\nabla g|^2 )}
\end{equation}
on \(\Omega(t), \ 0 \leq t \leq T\).  Getting a uniform lower bound of \( X\), we obtain a uniform lower bound of \( R_{g,2}\). \\

Suppose that \(X\) gets minimum at an interior point \(P_0=P_0(t)\) of \(\Omega(t)\). At \(P_0\), we choose a  coordinate system where \(g_i=0\) for \(i \neq 1\) and \(g_{ij}=0\) for \(i \neq j\). Then 
\begin{equation}
X=\frac{g_{11} }{g_1^2 } +\exp{(b g_1^2 )} , \\
\end{equation}
and we take \(b>0\) sufficiently large that \(X>0 \) at time \(t=0\). We will later determine the value of \(b\) more precisely. \\

The quantity \(X\) evolves in time as
\begin{equation}
\begin{split}
X_t&= \frac{1}{g_1^2 \sum_{k \neq 1} g_{kk}} (g_{11} \sum_{i \neq 1} g_{iit}+\sum_{i \neq 1}g_{ii}g_{11t} ) -\frac{\sum_{i \neq 1}g_{11} g_{ii}}{( g_1^2 \sum_{k \neq 1} g_{kk} )^2} (g_1^2 \sum_{i \neq 1} g_{iit}+2g_1 \sum_{i\neq 1}g_{ii} g_{1t} )  \\
&\ +2b g_1 g_{1t} \exp{(b g_1^2 )} \\
&=\frac{1}{g_1^2 } g_{11t} +\Bigg( 2b g_1\exp{(b g_1^2 )} -\frac{2g_{11} }{g_1^3 } \Bigg) g_{1t} . \\
\end{split}
\end{equation}

If \( g_{11} \geq 0\) at \( P_0\) , then \(X \geq 0\) at \( P_0\) and we are done. So we may assume \( g_{11} \leq 0\) at \( P_0\) . If \( g_{11} \) is bounded below at \( P_0 \), then \(X\) is also bounded below.  Let us assume that \( g_{11} \) is unbounded below.

At \(P_0\),
\begin{equation}
\begin{split}
0=X_1 &=\frac{1}{g_1^2 }g_{111} +\Bigg( -\frac{2\sum_{i \neq 1}g_{11} g_{ii} }{2 g_1^2 (\sum_{k \neq 1} g_{kk} )^2} +\frac{1}{g_1^2 \sum_{k \neq 1} g_{kk}} g_{11} \Bigg) \sum_{i \neq 1} g_{ii1} +\Bigg( 2b g_1 \exp{(b g_1^2 )} -\frac{2\sum_{i \neq 1}g_{11} g_{ii}}{g_1^3 \sum_{k \neq 1} g_{kk} } \Bigg) g_{11} \\
&=\frac{1}{g_1^2 }g_{111} +\Bigg( 2b g_1 \exp{(b g_1^2 )} -\frac{2g_{11} }{g_1^3 } \Bigg) g_{11} 
\end{split}
\end{equation}
and for \(m \neq 1\)
\begin{equation}
\begin{split}
0&=X_m= \frac{1}{g_1^2 }g_{11m} +\Bigg( -\frac{2\sum_{i \neq 1} g_{11} g_{ii} }{2 g_1^2 (\sum_{k \neq 1} g_{kk} )^2} +\frac{1}{g_1^2 \sum_{k \neq 1} g_{kk}} g_{11} \Bigg) \sum_{i \neq 1} g_{iim} = \frac{1}{g_1^2 }g_{11m}  
\end{split}
\end{equation}
so that
\(g_{111}=\Bigg( -2b g_1^3 \exp{(b g_1^2 )} +\frac{2g_{11} }{g_1 } \Bigg) g_{11}\) and \(g_{11m}=0\) for \(m \neq 1\). \\

Accordingly,
\begin{equation}
\begin{split}
X_{11}&=\frac{1}{g_1^2 } g_{1111} +\bigg(2b g_1 \exp{(b g_1^2 )} -\frac{6g_{11} }{g_1^3 } \bigg)\bigg( -2b g_1^3 \exp{(b g_1^2 )} +\frac{2g_{11} }{g_1 } \bigg) g_{11} +\bigg( (4b^2 g_1^2 +2b )\exp{(b g_1^2 )} +\frac{6g_{11} }{g_1^4} \bigg) g_{11}^2 ,\\ 
\end{split}
\end{equation}

and for \(m \neq 1\)
\begin{equation}
\begin{split}
X_{mm}&=\frac{1}{g_1^2 } g_{mm11} -\frac{2}{g_1^2 \sum_{k \neq 1} g_{kk}} \sum_{i \neq 1} g_{im1}^2  \\
&\ +2b g_1 g_{mm1} \exp{(b g_1^2 )} -\frac{2g_{11} }{g_1^3 } g_{mm1} +\frac{4g_{11} }{g_1^3 \sum_{k \neq 1} g_{kk}} g_{mm}g_{mm1}  \\
&\ +2b g_{mm}^2 \exp{(b g_1^2 )} -\frac{2g_{11} }{g_1^4 \sum_{k \neq 1} g_{kk}} ( g_{11}g_{mm}^2 +\sum_{i\neq1} g_{ii}g_{mm}^2 -g_{mm}^3  ) . \\  
\end{split}
\end{equation}

Let us define an operator
\begin{equation}
\begin{split}
LX&:=X_t-\frac{1}{I^{3/2}}g \sum_{i \neq 1} g_{ii} X_{11} -\frac{1}{I^{3/2}} \sum_{m\neq 1} ( Ig \sum_{j \neq 1,m} g_{jj} +g g_{11} +g_1^2 ) X_{mm} \\
& -\frac{2}{I^{3/2}} g \sum_{i \neq 1} g_{ii1} X_1 -\frac{3}{I^{3/2}} g_1 \sum_{j\neq 1} g_{jj}  X_1  \\
& -\frac{1}{2I^{5/2}}\Big(  -Ig^2 \big( gg_1 R_{g,2} +4gg_1 g_{11}\sum_{j \neq 1} g_{jj} \big) -6g^2 g_1^3 \sum_{j \neq 1} g_{jj} +6g^5 g_1^3 g_{11} \sum_{j \neq 1} g_{jj} \Big) X_1  \\
& +\frac{3}{I^{5/2}} ( Ig \sum_{j \neq 1} g_{jj} +g g_{11} +g_1^2 ) g^2 g_1 g_{11} X_1 -\frac{3}{I^{5/2}} g^3 g_1 g_{11}^2 X_1 -\frac{3}{I^{5/2}} ( g^3 g_1^2 \sum_{ j \neq 1} g_{jj} +g_1^2) g^2 g_1 g_{11} X_1 \\ 
& +\frac{3}{I^{5/2}} ( Ig \sum_{j \neq 1} g_{jj} +g g_{11} +g_1^2 ) g g_1^3 X_1 -\frac{1}{I^{3/2}} (2g^2g_1^2 \sum_{j \neq 1} g_{jj} + I \sum_{ j\neq 1} g_{jj} + g_{11}) g_1 X_1 \\
& -\frac{3}{I^{5/2}} g^2 g_1^3 g_{11} X_1 -\frac{3}{I^{5/2}} ( g^3 g_1^2 \sum_{j \neq 1} g_{jj} +g_1^2 ) gg_1^2 g_1 X_1  +\frac{1}{I^{3/2}} (2g^2g_1^2  +I) g_1 g_{11} X_1 \\
& +\frac{3}{I^{3/2}} g^2 g_1^3 \sum_{j \neq 1} g_{jj} X_1 -\frac{3}{I^{3/2}} g^2 g_1^3 g_{11} X_1 +\frac{3}{I^{5/2}} g^3 g_1 g_{11} \sum_{j \neq 1} g_{jj} X_1 +\frac{3}{I^{5/2}} g^2 g_1^3 \sum_{j\neq 1} g_{jj} X_1  \\
& -\bigg( 2b g_1^3 \exp{(b g_1^2 )} -\frac{2g_{11}}{g_1 } \bigg) \frac{1}{I^{3/2} } g \sum_{j \neq 1} g_{jj} X_1 +\frac{2}{I^{3/2}} g g_1^2 \sum_{n \neq 1} X_m^2  . 
\end{split}
\end{equation}

At the minimum point \(P_0(t)\) of \(X\), \(X_{11} \geq 0\) and \(X_{mm} \geq 0\) for all \(m \neq 1\) so that
\begin{equation}
\frac{1}{I^{3/2}}g \sum_{i \neq 1} g_{ii} X_{11} +\frac{1}{I^{3/2}} \sum_{m\neq 1} ( Ig \sum_{j \neq 1,m} g_{jj} +g g_{11} +g_1^2 ) X_{mm} \geq 0
\end{equation}
and \(X_i=0\) for all \(i\) at the minimum point \(P_0=P_0(t)\) of \(X\). So \(X_t \geq LX\) at \(P_0\). Note that, since \(g_{11} \leq 0\) and \(R_{g,2} \leq 0\), the quantity \( P\) which is defined by the formula
\begin{align*}
P&= \frac{1}{g_1^2 } \frac{1}{2I^{5/2}} (I R_{g,2}-6g^ 2g_1^2 g_{11}\sum_{j\neq 1} g_{jj} -6gg_1^4 \sum_{j\neq 1} g_{jj} )g_{11}  \\ 
& +\frac{1}{g_1^2 } \frac{1}{2I^{5/2}}g_{11}^2 \big(4I \sum_{j\neq 1}g_{jj} -Ig^3 R_{g,2} -4Ig^3 g_{11} \sum_{j\neq 1}g_{jj} -2( gg_1 R_{g,2} +4gg_1 g_{11}\sum_{j\neq 1}g_{jj} )g^4 g_1  \\
& \ -10g^2 g_1^2 \sum_{j\neq 1} g_{jj} +18g^5 g_1^2 \sum_{j\neq 1}g_{jj}g_{11}  \big) \\ 
& -\frac{5}{2I^{7/2}}\big( 4I g_1  \sum_{j\neq 1} g_{jj} -Ig^2 ( gg_1 R_{g,2} +4gg_1 g_{11} \sum_{j\neq 1} g_{jj} ) -6g^2 g_1^3 \sum_{j\neq 1} g_{jj} +6g^5 g_1^3 g_{11}\sum_{j\neq 1} g_{jj}\big) g g_1 g_{11} \\ 
& +\frac{5}{2I^{7/2}} g_1^2 (-I g R_{g,2} +6g^3 g_1^2 g_{11}\sum_{j\neq 1} g_{jj}  ) +\frac{15}{I^{7/2}} g^2 g_1^6 \sum_{j\neq 1} g_{jj} -\frac{20}{I^{5/2}} g_1^2 \sum_{j\neq 1} g_{jj} g g_{11} +\frac{1}{I^{5/2}} g^2 R_{g,2} g_{11} \\ 
& +\frac{1}{I^{5/2}} \big( -4 g g_1^2 g_{11}\sum_{j\neq 1} g_{jj} +g g_1^2 (\sum_{i \neq 1} g_{ii})^2-g g_1^2 \sum_{i \neq 1} g_{ii}^2 \big) +\frac{1}{I^{3/2}}  b  g_1^2 \big( (\sum_{i \neq 1} g_{ii})^2 -\sum_{i \neq 1} g_{ii}^2)\exp{(b g_1^2 )}  \\ 
& -\frac{5}{2I^{7/2}} \big(I (\sum_{i \neq 1} g_{ii})^2 -I \sum_{i \neq 1} g_{ii}^2 -6g^2 g_1^2 g_{11}\sum_{j\neq 1} g_{jj} \big) g^2 g_{11}  \\ 
&= \frac{1}{g_1^2 } \frac{1}{2I^{5/2}} \big(I R_{g,2} +O(g) \big)g_{11}  +\frac{1}{g_1^2 } \frac{1}{2I^{5/2}}g_{11}^2 \big(4I \sum_{j\neq 1}g_{jj} +O(g)  \big) -\frac{5}{2I^{7/2}}\big( 4I g_1  \sum_{j\neq 1} g_{jj} +O(g) \big) g g_1 g_{11} \\ 
& +\frac{5}{2I^{7/2}} g_1^2 (-I g R_{g,2} +O(g) ) +\frac{1}{I^{5/2}} \big( -4 g g_1^2 g_{11}\sum_{j\neq 1} g_{jj} +O(g) \big) -\frac{5}{2I^{7/2}} \big(I \sum_{i,j \neq 1\;, i \neq j} g_{ii} g_{jj} +O(g) \big) g^2 g_{11}\\ 
&  +\frac{1}{I^{5/2}} g^2 R_{g,2} g_{11} -\frac{20}{I^{5/2}} g_1^2 \sum_{j\neq 1} g_{jj} g g_{11} +\frac{15}{I^{7/2}} g^2 g_1^6 \sum_{j\neq 1} g_{jj}\\ 
& +\frac{1}{I^{3/2}} b  g_1^2 \big( (\sum_{i \neq 1} g_{ii})^2 -\sum_{i \neq 1} g_{ii}^2)\exp{(b g_1^2 )}  
\end{align*}
is nonnegative when \(g\) is sufficiently small. \\

Since we have \(X=\frac{g_{11}} {g_1^2} +\exp{(b g_1^2)} \) and \(X^2=\frac{g_{11}^2} {g_1^4} +\frac{2g_{11}} {g_1^2} \exp{(b g_1^2)} +\exp{(2b g_1^2)}\) at \(P_0\), we get \(g_{11} = {g_1^2}X -{g_1^2}\exp{(b g_1^2)} \) and \(g_{11}^2=g_1^4 X^2 -2g_1^2 \exp{(b g_1^2)} {g_1^2}X +g_1^4 \exp{(2b g_1^2)} \) so that
\begin{align*}
&LX =  \frac{1}{g_1^2 \sum_{k \neq 1} g_{kk} } \frac{1}{I^{3/2}} 2(g g_{11} +g_1^2) \sum_{i,j \neq 1; i \neq j}g_{ij1}^2 +\frac{1}{g_1^2 \sum_{k \neq 1} g_{kk} } \frac{1}{I^{3/2}} \big( (g g_{11} +g_1^2) -Ig \sum_{l \neq 1} g_{ll} \big) \sum_{i \neq 1} g_{ii1}^2 \\
& +\frac{1}{g_1^2 \sum_{k \neq 1} g_{kk} } \frac{1}{I^{3/2}} Ig  \Big( \sum_{i,j \neq 1; i \neq j; } \sum_{l \neq 1,i,j} g_{ll} g_{ij1}^2 +2\sum_{i,j \neq 1;i\neq j} g_{jj} g_{ii1}^2 \Big) \\
& +\frac{1}{4g_1^2  } \frac{1}{\sqrt{I}} g (\sum_{i \neq 1} g_{ii1})^2 +\frac{1}{3g_1^2 \sum_{k \neq 1} g_{kk} } \frac{1}{I^{3/2}} (g g_{11} +g_1^2) \sum_{i \neq 1}g_{ii1}^2 \\
& +\frac{g}{4g_1^2\sqrt{I}} \bigg( \Big(\sum_{i \neq 1} g_{ii1} -\frac{4g^2 g_1 g_{11} }{I} \sum_{j \neq 1} g_{jj} \Big)^2+ \Big(\sum_{i \neq 1} g_{ii1} -\frac{12g_1 }{I^2} f_{11}^2 \Big)^2 + \Big(\sum_{i \neq 1} g_{ii1} -\frac{8b g_1 g_{11} }{I } \exp{(b g_1^2 )}  \Big)^2 \bigg) \\
& +\frac{1}{12g_1^2 \sum_{k \neq 1} g_{kk} } \frac{1}{I^{3/2}} f_{11} \sum_{i \neq 1} \Bigg( \bigg( g_{ii1} +\frac{12}{f_{11}} g^3 g_1 g_{11} \sum_{j \neq 1} g_{jj} g_{ii}  \bigg)^2+\bigg( g_{ii1} +\frac{24g_{11} }{g_1 f_{11}} Ig g_{ii}^2 \bigg)^2 \\
& \ +\bigg( g_{ii1} -\frac{24}{g_1} g_{11}  g_{ii}\bigg)^2+ \bigg( g_{ii1} -\frac{12}{f_{11} } g_1 \sum_{j \neq 1} g_{jj} g_{ii}  \bigg)^2 +\bigg( g_{ii1} -\frac{24g_{11} }{g_1f_{11} } Ig \sum_{j \neq 1} g_{jj} g_{ii} \bigg)^2 \\
& \ +\bigg( g_{ii1} +\frac{12}{f_{11} } g_1 (\sum_{j \neq 1} g_{jj} )^2 \bigg)^2 + \bigg( g_{ii1} +\frac{36}{f_{11}}g_1  g_{11} \sum_{j \neq 1} g_{jj} \bigg)^2 + \bigg( g_{ii1} + \frac{24}{g_1f_{11} } g g_{11}^2 \sum_{j \neq 1} g_{jj} \bigg)^2 \Bigg) +P \\ 
& +\Big( \frac{b }{ I^{5/2} } (2 -10 g^2 g_1^2) g_1^4\sum_{ j\neq 1} g_{jj} +\mathcal{O}(1)\Big) \exp{(2b g_1^2 )} \\ 
&  -\frac{1}{I^{5/2}} \Big( 6b g^3 g_1^8 \exp{(b g_1^2 )} +\mathcal{O}(g) \Big) X^3 +\frac{1}{I^{5/2}} \big( \mathcal{O}(1) +\mathcal{O}(g)\big) X^2\\ 
& +\frac{1}{I^{5/2}} \Big( b^2\mathcal{O}(g) \exp{(4b g_1^2 )} +(b^2 \mathcal{O}(g) +b \mathcal{O}(g^3))\exp{(2b g_1^2 )} +\big( \mathcal{O}(g) +b^2 \mathcal{O}(g^2) +b \mathcal{O}(g^2) \big) \exp{(b g_1^2 )}\Big) X^2 \\ 
& +\frac{1}{I^{5/2}} \big(\mathcal{O}(1) +\mathcal{O}(g) +b \mathcal{O}(g) +b^2 \mathcal{O}(g)\big)\exp{(b g_1^2 )} X \\ 
& +\frac{2}{I^{3/2}} \sum_{i\neq 1} ( Ig \sum_{j \neq 1,i} g_{jj} +f_{11}) \frac{1}{g_1^2 \sum_{k \neq 1} g_{kk}} \sum_{l \neq 1,i} g_{ll}g_{ii}^2  X  +\frac{1}{I^{5/2}} \big( - I^2 R_{g,2} +\mathcal{O}(g) \big) X \\  
& +\frac{6b}{I^{5/2}} g^3 g_1^8 \exp{(4b g_1^2 )}+\frac{1}{I^{5/2}}\Big( \mathcal{O}(g)+b^2 \mathcal{O}(g) +b\mathcal{O}(g)  \Big) \exp{(3b g_1^2 )}  \\ 
& +\frac{1}{I^{5/2}} \big(b\mathcal{O}( g)+b\mathcal{O}( g^2) +6g^2 g_1^4 g_{11} +\mathcal{O}( g^2) \big) \exp{(2b g_1^2 )}  \\ 
& +\frac{1}{I^{5/2}} \Big( -b\big( 2I \sum_{i\neq 1} ( Ig \sum_{j \neq 1,i} g_{jj} +f_{11} ) g_{ii}^2 +6g g_1^4 f_{11} \sum_{ j \neq 1 } g_{jj} \big)+\mathcal{O}(1) +\mathcal{O}(g) \Big) \exp{(b g_1^2 )} \\ 
& -\frac{12}{I^{3/2}} \frac{1}{f_{11} } \sum_{j \neq 1} g_{jj} \sum_{i \neq 1} g_{ii}^2  -\frac{1}{I^{3/2}} \frac{12(n-1)}{f_{11} } (\sum_{j \neq 1} g_{jj})^3 -\frac{3}{I^{5/2}} g_1^4 \sum_{j\neq 1} g_{jj} -\frac{36}{I^{9/2}} g ( g g_{11} +g_1^2 )^4  .\\ 
\end{align*}

Let us take \(b>0\) sufficiently large, so that \(X_0=X(P_0(0), 0)\) is positive. If X is uniformly bounded below, we are done. Otherwise, there exists some time \(t_0>0\) such that \(X(P_0(t_0), t_0)=0\) for the first time. As \(t \to t_0^+ \), we have \(X=X(P_0(t), t) \to 0^+ \). So there exists some time \(t_1 \in (0, t_0)\) such that for \(t \in [t_1, t_0)\), as \(g\) becomes arbitrarily small,
\begin{equation}
\begin{split}
X_t & \geq  \Big( \frac{2b }{ I^{5/2} } g_1^4\sum_{ j\neq 1} g_{jj} +\mathcal{O}(1)\Big) \exp{(2b g_1^2 )} \\ 
& +\frac{1}{I^{5/2}} \Big( \mathcal{O}(g) +b\mathcal{O}(g^2) +b \mathcal{O}(g^3) \exp{(b g_1^2 )} \Big) X^3 +\frac{1}{I^{5/2}} \big( \mathcal{O}(1) +\mathcal{O}(g)\big) X^2 \\ 
& +\frac{1}{I^{5/2}} \Big( b^2\mathcal{O}(g) \exp{(4b g_1^2 )} +(b^2 \mathcal{O}(g) +b \mathcal{O}(g^3))\exp{(2b g_1^2 )} +\big( \mathcal{O}(g) +b^2 \mathcal{O}(g^2) +b \mathcal{O}(g^2) \big) \exp{(b g_1^2 )}\Big) X^2 \\ 
& +\frac{1}{I^{5/2}} \big(\mathcal{O}(1) +\mathcal{O}(g) +b \mathcal{O}(g) +b^2 \mathcal{O}(g)\big)\exp{(b g_1^2 )} X +\frac{1}{I^{5/2}} \mathcal{O}(g) X \\ 
& +\frac{1}{I^{5/2}}\Big( \mathcal{O}(g)+b^2 \mathcal{O}(g) +b\mathcal{O}(g)  \Big) \exp{(3b g_1^2 )} +\frac{1}{I^{5/2}} \big(b\mathcal{O}( g)+b\mathcal{O}( g^2) +\mathcal{O}( g) \big) \exp{(2b g_1^2 )} \\ 
& +\frac{1}{I^{5/2}} \Big( b\mathcal{O}(1)+\mathcal{O}(1) +\mathcal{O}(g) \Big) \exp{(b g_1^2 )} +\mathcal{O}(1) \\ 
& \geq \frac{b }{ I^{5/2} }  g_1^4 \sum_{ j\neq 1} g_{jj} \exp{(2b g_1^2 )}  >0 \\ 
\end{split}
\end{equation}
for sufficiently large \(b>0\), implying that

\begin{equation}
X(X_0(t),t) \geq X_0 
\end{equation}
where \(X_0=(X_0(t_1),t_1) \) is the data at the time \(t_1\). Hence, \(X\) cannot become zero, which is a contradiction. This concludes the proof that a uniform lower bound of \(R_{g,2}\) exists. 
\end{proof}

\begin{corollary}
Let us assume the conditions in the subsection \ref{subsec:assump}. Then there exists a constant \(C>0\) satisfying
\begin{equation}
g_{11} \geq -C
\end{equation}
for sufficiently small value of \(g\) near the interface. \\
\end{corollary}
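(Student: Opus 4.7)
The plan is to pass from the uniform lower bound $R_{g,2}\ge -C$ established in the preceding lemma to the desired lower bound on $g_{11}$, by exploiting a purely algebraic decomposition of $R_{g,2}$ in the adapted coordinates together with the pinching of the tangential Laplacian furnished by Corollary~\ref{bound:g_ii}.

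At an arbitrary point $P_0$ near the interface I would work in the local orthonormal frame in which $g_i=g_1\delta_{1i}$ and $g_{ij}=0$ for $i\ne j$; here $e_1=\nu$ is the outer normal to the level set of $g$ through $P_0$ and $g_{11}$ is the invariantly defined normal second derivative $g_{\nu\nu}$. In this frame a direct expansion yields
\[
R_{g,2}=\Big(\sum_{i=1}^n g_{ii}\Big)^{2}-\sum_{i=1}^n g_{ii}^{2}
       =2g_{11}\sum_{i\ne 1}g_{ii}+\bigg(\Big(\sum_{i\ne 1}g_{ii}\Big)^{2}-\sum_{i\ne 1}g_{ii}^{2}\bigg).
\]
Since $f=g^2/2$ is convex, for every $i\ne 1$ one has $f_{ii}=gg_{ii}\ge 0$ and hence $g_{ii}\ge 0$. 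Combined with the upper bound $\sum_{i\ne 1}g_{ii}\le C^{-1}$ from Corollary~\ref{bound:g_ii}, each summand $g_{ii}$ with $i\ne 1$ is individually bounded above, and therefore the expression in parentheses on the right, which equals $2\sum_{i<j,\,i,j\ne 1}g_{ii}g_{jj}\ge 0$, is uniformly controlled from above.

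Combining these two observations with the preceding lemma, I would rearrange the identity to obtain
\[
2g_{11}\sum_{i\ne 1}g_{ii}=R_{g,2}-\bigg(\Big(\sum_{i\ne 1}g_{ii}\Big)^{2}-\sum_{i\ne 1}g_{ii}^{2}\bigg)\;\ge\;-C',
\]
and then divide by the strictly positive quantity $\sum_{i\ne 1}g_{ii}\ge c>0$, which is again furnished by Corollary~\ref{bound:g_ii}, to conclude $g_{11}\ge -C$ uniformly in $P_0$ near the interface. I do not expect any substantive obstacle here, since everything reduces to an algebraic rearrangement once the adapted frame is chosen; the only mild care needed is to note that $g_{11}=g_{\nu\nu}$ is coordinate-independent, so the pointwise bound obtained in the adapted frame at each $P_0$ is intrinsic.
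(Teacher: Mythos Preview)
Your argument is correct and is essentially the paper's own proof: both rearrange the algebraic identity expressing $R_{g,2}$ in terms of $2g_{11}\sum_{i\ne 1}g_{ii}$ and a bounded tangential piece, then divide by the uniformly positive tangential Laplacian from Corollary~\ref{bound:g_ii}. The paper writes the identity in the slightly more general form $2g_{11}\sum_{i\ne 1}g_{ii}=R_{g,2}-(\sum_{i\ne 1}g_{ii})^{2}+\sum_{i,j\ne 1}g_{ij}^{2}+2\sum_{i\ne 1}g_{1i}^{2}$ without first diagonalizing, and your additional step showing $g_{ii}\ge 0$ for $i\ne 1$ is unnecessary since $(\sum_{i\ne 1}g_{ii})^{2}-\sum_{i\ne 1}g_{ii}^{2}\le(\sum_{i\ne 1}g_{ii})^{2}$ already; but the substance of the two arguments is identical.
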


\begin{proof}

\begin{equation}
2g_{11} \sum_{i \neq 1} g_{ii} =R_{g,2}-\big(\sum_{i\neq 1} g_{ii} \big)^2 +\sum_{i,j\neq 1} g_{ij}^2 +2\sum_{i \neq 1} g_{1i}^2
\end{equation}
and \(\sum_{i\neq 1} g_{ii}\) is uniformly bounded above and below by the inequality (\ref{inequal:g_ii}).
\end{proof}

In conclusion, we found that \( |g R_{g,2}| \) is uniformly bounded above, and that \( |R_{g,2}| \) is also unifornly bounded above if \( R_{g,2} \leq 0 \) additionally. In the coordinate system where \( g_i =0\) for \( i\neq 1\) and \( g_{ij}=0 \) for \( i \neq j\), this implies that \( |g g_{11}| \) is uniformly bounded above, and that \( |g_{11}| \) is also unifornly bounded above if \( g_{11} \leq 0 \) additionally.

\section{Hölder estimates} \label{sec:holder}
\subsection{\(C^{1, \alpha}_s\) estimates}

We use a local coordinate change from 
\begin{equation}
\big(x_1, x_2 \ldots, x_n, g(x_1,x_2, \ldots, x_n, t)\big)
\end{equation}
 to 
\begin{equation}
\big(h(x_{n+1}, x_2, \ldots, x_n, t), x_2, \ldots, x_n, x_{n+1} \big) . \\
\end{equation}

Let us compute the evolution of the function \(h\). As done in Daskalopoulos and Hamilton \cite{daskalopoulos-hamilton99}, the first-order derivatives of \(g\) are give by
\begin{equation}
\begin{split}
&g_t=-\frac{h_t}{h_{n+1} }, \ g_1=\frac{1}{h_{n+1} }, \ g_i=-\frac{h_i}{h_{n+1} } \ \text{for} \ i \neq 1,
\end{split} 
\end{equation}
and the second-order derivatives of \(g\) are written as
\begin{equation}
\begin{split}
&g_{11}=-\frac{1}{h_{n+1}^3 } h_{n+1, n+1} , \ g_{1i}=-\frac{1}{h_{n+1} }\bigg( -\frac{h_i}{h_{n+1}^2}h_{n+1, n+1}+\frac{1}{h_{n+1} } h_{n+1, i}\bigg) \ \text{for} \ i = 2, \ldots, n, \\
&g_{ii}=-\frac{1}{h_{n+1} }\bigg( \frac{h_i^2 }{h_{n+1}^2}h_{n+1, n+1}-2\frac{h_i}{h_{n+1} } h_{n+1, i}+h_{ii} \bigg) \ \text{for} \ i = 2, \ldots, n, \\
&g_{ij}=-\frac{1}{h_{n+1} }\bigg( \frac{h_i h_j}{h_{n+1}^2}h_{n+1, n+1}-\frac{h_i}{h_{n+1} } h_{n+1, j}-\frac{h_j}{h_{n+1} } h_{n+1, i}+h_{ij} \bigg) \ \text{for} \ i,j = 2, \ldots, n . \\
\end{split}
\end{equation}

Using the notation 
\begin{equation}
\mathcal{I}=h_{n+1}^2 +x_{n+1}^2 +x_{n+1}^2 \sum_{i=2}^n h_i^2 ,
\end{equation} 
the evolution equation of \(h\) is
\begin{equation} \label{eq:h_t}
\begin{split}
&h_t = -\frac{1}{2\sqrt{\mathcal{I}}} h_{n+1}^2 x_{n+1} \Bigg( \frac{1}{h_{n+1}^4 } h_{n+1, n+1} \sum_{i,j=2}^n (h_i^2 h_{jj} +h_j^2 h_{ii} -2h_i h_j h_{ij} ) \\
& \ \ -\frac{1}{h_{n+1}^4 } \sum_{i,j=2}^n ( h_i^2 h_{n+1, j}^2 +h_j^2 h_{n+1, i}^2 -2h_i h_j h_{n+1, i} h_{n+1, j} ) +\frac{2}{h_{n+1}^4 } \sum_{i=2}^n (h_{n+1, n+1}  h_{ii} -h_{n+1, i}^2 ) \\
& \ \ -\frac{2}{h_{n+1}^3 } \sum_{i,j=2}^n (h_i h_{n+1, i} h_{jj} + h_j h_{n+1, j} h_{ii} - h_i h_{n+1, j}h_{ij} -h_j h_{n+1, i}h_{ij} ) +\frac{1}{h_{n+1}^2 } \sum_{i,j=2}^n (h_{ii}h_{jj} -h_{ij}^2 ) \Bigg) \\
&\ +\frac{1}{\sqrt{\mathcal{I}}} h_{n+1}^2 \Bigg( \sum_{i,j=2}^n \frac{1}{h_{n+1}^3 } (h_j^2 h_{ii} - h_i h_j h_{ij}) +\sum_{i=2}^n \frac{1}{h_{n+1}^3 }h_{ii} \Bigg) \\
&\ +\frac{1}{\mathcal{I}^{3/2}} h_{n+1}^4  x_{n+1}^2 \Bigg( -\frac{1}{h_{n+1}^5 } \sum_{i=2}^n  h_{ii} -\frac{1}{h_{n+1}^5 } \sum_{i,j=2}^n h_i^2 h_{jj} \\
&\ \ +\frac{x_{n+1}}{h_{n+1}^6 } \sum_{i=2}^n (h_{ii} h_{n+1, n+1} - h_{n+1, i}^2 ) +\frac{x_{n+1} }{h_{n+1}^6 } \sum_{i,j=2}^n h_i h_j ( h_{ij} h_{n+1, n+1} -h_{n+1, i} h_{n+1, j} ) \\
&\ \ -\frac{x_{n+1} }{h_{n+1}^5 } \sum_{i,j,k=2}^n h_i h_k \big( h_{jj} ( h_i h_{n+1, k}+h_k h_{n+1, i} ) -h_{jk} ( h_i h_{n+1, j}+h_j h_{n+1, i} ) \big) \\
&\ \ -\frac{1}{h_{n+1}^5 } \sum_{i,j=2}^n ( h_i^2 h_{jj}  -h_i h_j h_{ij} ) +\frac{1}{h_{n+1}^5 } \sum_{i,j,k=2}^n h_k^2 (h_i h_j h_{ij} -h_i^2 h_{jj}) \\
&\ \ +\frac{2x_{n+1} }{h_{n+1}^6 } \sum_{i,j=2}^n (h_i h_j h_{n+1, i} h_{n+1, j}  -h_i^2 h_{n+1, j}^2) +\frac{2x_{n+1} }{h_{n+1}^6 } h_{n+1, n+1} \sum_{i,j=2}^n (h_i^2 h_{jj} -h_i h_j h_{ij} )  \\
&\ \ +\frac{x_{n+1} }{h_{n+1}^6 } \sum_{i,j,k=2}^n h_k^2 (h_i h_j h_{n+1, i} h_{n+1, j} -h_i^2 h_{n+1, j}^2 ) +\frac{x_{n+1} }{h_{n+1}^6 } h_{n+1, n+1} \sum_{i,j,k=2}^n h_k^2 ( h_i^2 h_{jj} -h_i h_j h_{ij} )\\
&\ \ +\frac{2x_{n+1} }{h_{n+1}^5 } \sum_{i,j=2}^n (h_i h_{n+1, j} h_{ij} -h_i h_{n+1, i} h_{jj} ) +\frac{x_{n+1} }{h_{n+1}^4 } \sum_{i,j,k=2}^n h_i h_k (h_{jj} h_{ik} - h_{ij} h_{jk} ) \Bigg) . \\
\end{split}
\end{equation}

If we use a local coordinate system where \(g_i=0\) for \(i =2, \ldots , n\) and \(g_{ij}\) for \(i \neq j\), then 
\begin{equation} \label{eq1: Dg in h}
\begin{split}
h_{n+1}=\frac{1}{g_1} \geq  0, \ h_i=0 \ \text{for} \ i =2, \ldots , n,
\end{split} 
\end{equation}
and the second-order derivatives of \(g\) are written as
\begin{equation}  \label{eq2: Dg in h}
\begin{split}
&h_{n+1, n+1} = - h_{n+1}^3 g_{11} , \ h_{n+1, i}=0 \ \text{for} \ i = 2, \ldots, n, \\
& h_{ii}=-h_{n+1} g_{ii} \leq 0 \ \text{for} \ i = 2, \ldots, n, \ h_{ij}=0  \ \text{for} \ i,j = 2, \ldots, n \text{ such that } i \neq j. 
\end{split}
\end{equation}

Consequently,
\begin{equation}  \label{eq3: Dg in h}
\begin{split}
C^{-1} \leq & h_{n+1} \leq C,  \\
0 \leq -x_{n+1} h_{n+1, n+1} = h_{n+1}^3 g g_{11} \leq C &\text{ or } -gC \leq -x_{n+1} h_{n+1, n+1} = h_{n+1}^3 g g_{11} \leq 0, \\
-C \leq & \sum_{i=2}^n h_{ii} \leq -C^{-1},  \\
\end{split}
\end{equation}

\begin{equation}  \label{eq4: Dg in h}
\begin{split}
-C &\leq R_{g,2} =\frac{1}{h_{n+1}^2 }\sum_{i,j=2}^n (h_{ii}h_{jj} -h_{ij}^2 ) +\frac{2}{h_{n+1}^4 } \sum_{i=2}^n h_{n+1, n+1} h_{ii} \leq 0,  \text{ or } \\
0 &\leq g R_{g,2} =\frac{x_{n+1} }{h_{n+1}^2 }\sum_{i,j=2}^n (h_{ii}h_{jj} -h_{ij}^2 ) +\frac{2x_{n+1} }{h_{n+1}^4 } \sum_{i=2}^n h_{n+1, n+1} h_{ii} \leq C, \\
\end{split}
\end{equation}

and
\begin{equation}  \label{eq5: Dg in h}
C^{-1} \leq  g_1^2 \sum_{i=2}^n g_{ii} = -\frac{1}{h_{n+1}^3 }\sum_{i=2}^n h_{ii} \leq C 
\end{equation}
for a uniform constant \(C>0\), from Lemma \ref{upperbound:Dg}, \ref{lowerbound:Dg}, and Corollary \ref{bound:g_ii}. \\

From the relations (\ref{eq1: Dg in h})-(\ref{eq5: Dg in h}) and straightforward calculations, the evolution equation of \(\widetilde{h}\), which is either \(h_t\) or \(h_i\), \(i=2, \ldots , n\), is the linearized equation
\begin{equation} \label{eq:tildeh_t}
\begin{split}
\widetilde{h}_t  &=  x_{n+1} \Bigg( -\frac{1}{\sqrt{\mathcal{I}}} \frac{1}{h_{n+1}^2 } \sum_{i=2}^n h_{ii} +\mathcal{O}(x_{n+1}^2) \Bigg)\widetilde{h}_{n+1, n+1}  \\ 
&\ +\Bigg( \frac{1}{\sqrt{\mathcal{I}}} \frac{1}{h_{n+1}} -\frac{1}{\sqrt{\mathcal{I}}} \frac{1}{h_{n+1}^2 } x_{n+1} h_{n+1, n+1} +\mathcal{O}(x_{n+1}) \Bigg) \sum_{i=2}^n \widetilde{h}_{ii} \\
&\ +\Bigg( -\frac{1}{\sqrt{\mathcal{I}}} \frac{1}{h_{n+1}^2}\sum_{i=2}^n h_{ii} +\mathcal{O}(x_{n+1}) \Bigg) \widetilde{h}_{n+1} . \\ 
\end{split}
\end{equation}

It is of the form
\begin{equation}
\widetilde{h}_t  =  x_{n+1} a_{n+1, n+1} \widetilde{h}_{n+1, n+1}  +\sum_{i=2}^n a_{ii} \widetilde{h}_{ii}  +b_{n+1} \widetilde{h}_{n+1}  . \\ 
\end{equation}

We immediately see that the matrix \((a_{ij})\) is uniformly elliptic and strictly positive, and \(b_{n+1} >0\) is uniformly bounded, for sufficiently small \(g=x_{n+1}\). Thus, with minor changes in the higher dimension to the line of proof of Theorem 3.1 in \cite{daskalopoulos-lee03}, we obtain the following lemma.
\begin{lemma}
There exist a number \(0<\alpha<1\) so that, for any \(r< \rho\)
\begin{equation}
\Vert h_t \Vert_{C^\alpha_s (\mathcal{B}_r)} +\sum_{i=2}^n \Vert h_i \Vert_{C^\alpha_s (\mathcal{B}_r)} \leq C(r, \rho) \Vert \widetilde{h} \Vert _{C^0 (\mathcal{B}_1)}
\end{equation}
with respect to the singular distance function
\begin{equation}
s( (x_1, t_1), (x_2, t_2))= |\sqrt{x_{1,n+1}}-\sqrt{x_{2,n+1}}|+|(x_{1,2}, \ldots, x_{1,n})-(x_{2,2}, \ldots, x_{2,n})|+\sqrt{|t_1-t_2|}
\end{equation}
where \(\mathcal{B}_\rho=\mathcal{B}_\rho(x_0,t_0) \) is the parabolic box
\begin{equation}
\begin{split}
\{(x_{n+1}, x_2, \ldots, x_n, t) : &\ x_{n+1} \geq 0, |x_{n+1}-x_{0, n+1}|\leq \rho^2, |(x_2, \ldots, x_n)-(x_{0,2}, \ldots, x_{0,n})|\leq \rho, \\
& -\rho^2 \leq t -t_0 \leq 0  \} .
\end{split}
\end{equation}
\end{lemma}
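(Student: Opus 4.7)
The equation (\ref{eq:tildeh_t}) governing $\widetilde h$ (either $h_t$ or $h_i$, $i=2,\ldots,n$) is a degenerate linear parabolic equation whose degeneracy is concentrated on $\{x_{n+1}=0\}$ and is precisely of the type treated in \cite{daskalopoulos-lee03}. My plan is therefore to verify, via the relations (\ref{eq1: Dg in h})--(\ref{eq5: Dg in h}), that all the structural hypotheses of the regularity theorem in \cite{daskalopoulos-lee03} are satisfied, and then to run the same oscillation-decay argument, adapted to the extra tangential directions $x_2,\ldots,x_n$.

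First I would check the structural assumptions on the coefficients. By Lemma \ref{upperbound:Dg}, Lemma \ref{lowerbound:Dg}, Corollary \ref{bound:g_ii}, and the bounds on $R_{g,2}$, $g\,g_{11}$, each of $h_{n+1}$, $-\sum_{i=2}^n h_{ii}$, and $-x_{n+1}h_{n+1,n+1}$ is bounded above and below by positive uniform constants, while the $\mathcal O(x_{n+1})$ corrections are negligible for small $x_{n+1}$. Hence there exist constants $0<\lambda\le\Lambda$ such that
\begin{equation*}
\lambda\le a_{n+1,n+1},\; a_{ii}\le\Lambda \quad(i=2,\ldots,n),\qquad \lambda\le b_{n+1}\le\Lambda,
\end{equation*}
so the operator on the right-hand side is uniformly elliptic in the tangential block and strictly positive on the degenerate coefficient, with a drift $b_{n+1}$ bounded away from both $0$ and $\infty$. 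This is exactly the admissible class of operators in \cite{daskalopoulos-lee03}.

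Next I would translate the singular distance $s$ into the natural scaling of the equation. Under the change of variable $z=2\sqrt{x_{n+1}}$ one finds
\begin{equation*}
x_{n+1}\widetilde h_{n+1,n+1}=\widetilde h_{zz}-\frac{1}{z}\widetilde h_z,\qquad b_{n+1}\widetilde h_{n+1}=\frac{2b_{n+1}}{z}\widetilde h_z,
\end{equation*}
so the equation becomes uniformly parabolic on $\{z>0\}$ with a Bessel-type drift $(2b_{n+1}-a_{n+1,n+1})z^{-1}\widetilde h_z$ whose coefficient is bounded and, thanks to the lower bound on $b_{n+1}$, is of sufficient strength that the Neumann-type boundary behaviour at $z=0$ holds. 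The parabolic box $\mathcal B_\rho$ and the singular metric $s$ are then just the standard parabolic ball and parabolic distance in the $(z,x_2,\ldots,x_n,t)$ coordinates, so the result I want is precisely an interior Hölder estimate for bounded solutions of a uniformly parabolic equation with a Bessel drift along one boundary.

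Finally I would invoke the two main ingredients of the Daskalopoulos--Lee argument: a weak Harnack / oscillation inequality on cylinders $\mathcal B_\rho$ touching $\{x_{n+1}=0\}$, and an interior Krylov--Safonov-type estimate away from the boundary. Iterating the oscillation decay yields $\mathrm{osc}_{\mathcal B_r}\widetilde h\le C(r/\rho)^{\alpha}\|\widetilde h\|_{C^0(\mathcal B_\rho)}$ for some $\alpha\in(0,1)$, which is the claimed Hölder estimate. The higher-dimensional adaptation amounts to carrying the extra uniformly elliptic block $\sum_{i=2}^n a_{ii}\partial_{ii}$ through both the barrier construction (used at the degenerate boundary) and the measure-theoretic covering in the weak Harnack inequality. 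The main obstacle I anticipate is exactly this adaptation of the boundary weak Harnack estimate of \cite{daskalopoulos-lee03} from one tangential variable to $n-1$ tangential variables; this is technical (one must replace one-dimensional barriers and cube decompositions by their $n$-dimensional analogues) but does not introduce any new phenomena because the tangential block is uniformly elliptic and the only degeneracy still lives in the single direction $x_{n+1}$.
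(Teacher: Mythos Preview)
Your approach is essentially the paper's own: verify from the a priori bounds (\ref{eq1: Dg in h})--(\ref{eq5: Dg in h}) that the coefficients of the linear equation (\ref{eq:tildeh_t}) fall into the admissible class of \cite{daskalopoulos-lee03}, and then invoke Theorem~3.1 there with the routine extension from one to $n-1$ tangential variables. One small correction: $-x_{n+1}h_{n+1,n+1}$ is only bounded, not bounded below by a positive constant (see (\ref{eq3: Dg in h})), but this is harmless because that quantity enters $a_{ii}$ only as a lower-order correction to the dominant positive term $h_{n+1}^{-1}$, and does not enter $a_{n+1,n+1}$ at all.
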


Consequently,  \(h_t\), and \(h_i\), \(i=2, \ldots , n\) belong to \(C^{\alpha}_s (\mathcal{B}_\frac{\rho}{2})\) near the free boundary.

For \(\widetilde{h}=h_{n+1}\), straightforward calculations show that in a local coordinates and the relations (\ref{eq1: Dg in h})-(\ref{eq5: Dg in h}) with the choice of coordinates \(g_i=0\) for \(i =2, \ldots , n\) and \(g_{ij}\) for \(i \neq j\), 
\begin{equation}
\begin{split}
\widetilde{h}_t  &=  x_{n+1} \Bigg( -\frac{1}{\sqrt{\mathcal{I}}} \frac{1}{h_{n+1}^2 } \sum_{i=2}^n h_{ii} +\mathcal{O}(x_{n+1}^2) \Bigg)\widetilde{h}_{n+1, n+1} \\
&\ +\Bigg( \frac{1}{\sqrt{\mathcal{I}}} \frac{1}{h_{n+1}} -\frac{1}{\sqrt{\mathcal{I}}} \frac{1}{h_{n+1}^2 } x_{n+1} h_{n+1, n+1} +\mathcal{O}(x_{n+1}) \Bigg) \sum_{i=2}^n \widetilde{h}_{ii}   \\ 
&\ +\Bigg( -\frac{2}{\sqrt{\mathcal{I}}} \frac{1}{h_{n+1}^2}\sum_{i=2}^n h_{ii} +\mathcal{O}(x_{n+1}) \Bigg) \widetilde{h}_{n+1}-\frac{1}{2\sqrt{\mathcal{I}}} (\sum_{i,j=2}^n h_{ii}h_{jj} -\sum_{i=2}^n h_{ii}^2 ) +\mathcal{O}(x_{n+1}) . \\ 
\end{split}
\end{equation}

Since the part \( -\frac{1}{2\sqrt{\mathcal{I}}} (\sum_{i,j=2}^n h_{ii}h_{jj} -\sum_{i=2}^n h_{ii}^2 )  =-\frac{1}{2\sqrt{\mathcal{I}}} h_{n+1}^2 (\sum_{i,j=2}^n g_{ii} g_{jj} -\sum_{i=2}^n g_{ii}^2 ) \) is uniformly bounded for small \(x_{n+1}\), we can apply the same estimate as \(h_t\) and \(h_i\), \(i =2, \ldots, n\).

\begin{lemma} There exist a number \(0<\alpha<1\) and positive constants \(\eta>0\) and \(C>0\) depending only on the initial data and \(\rho_0\) such that
\begin{equation}
\Vert {h_t} \Vert_{C^\alpha_s (\mathcal{B}_\eta)} \leq C , \  \Vert {h_{n+1} } \Vert_{C^\alpha_s (\mathcal{B}_\eta)} \leq C \text{ and } \Vert {h_i} \Vert_{C^\alpha_s (\mathcal{B}_\eta)} \leq C \text{ for } i=2, \ldots , n.
\end{equation}
\end{lemma}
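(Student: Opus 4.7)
The plan is to reduce the lemma to the preceding $C^{\alpha}_s$ estimate by combining two ingredients: uniform $C^0$ control of the three transformed quantities, and the structural form of the linearized equations already computed. The coordinate identities recorded in (\ref{eq1: Dg in h})--(\ref{eq5: Dg in h}) give
\[
h_{n+1}=\tfrac{1}{g_1},\qquad h_i=-\tfrac{g_i}{g_1},\qquad h_t=-\tfrac{g_t}{g_1},
\]
so the uniform gradient bounds of Lemmas \ref{upperbound:Dg} and \ref{lowerbound:Dg} and the bound $c\le g_t\le c^{-1}$ from Section~\ref{sec:derivative} immediately yield constants $C>0$ such that $|h_t|,|h_i|,|h_{n+1}|\le C$ on a suitably small parabolic box $\mathcal{B}_{\eta_0}(x_0,t_0)$ around any point on the interface. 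This supplies the $C^0$ bound required by the preceding $C^{\alpha}_s$ estimate.

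Next, for $\widetilde h=h_t$ and $\widetilde h=h_i$ ($i=2,\dots,n$), the equation (\ref{eq:tildeh_t}) has the form
\[
\widetilde h_t = x_{n+1}\,a_{n+1,n+1}\,\widetilde h_{n+1,n+1}+\sum_{i=2}^n a_{ii}\,\widetilde h_{ii}+b_{n+1}\,\widetilde h_{n+1},
\]
whose coefficient matrix is, by (\ref{eq1: Dg in h})--(\ref{eq5: Dg in h}), uniformly elliptic with strictly positive entries once $\eta$ is chosen so small that all the $\mathcal{O}(x_{n+1})$ remainders are subdominant; similarly $b_{n+1}>0$ is uniformly bounded. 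This is exactly the class of degenerate parabolic equations treated by the previous lemma, so applying it on $\mathcal{B}_{\eta_0/2}$ directly yields the desired bound
\[
\Vert h_t\Vert_{C^{\alpha}_s(\mathcal B_\eta)}\le C,\qquad \Vert h_i\Vert_{C^{\alpha}_s(\mathcal B_\eta)}\le C\quad (i=2,\dots,n),
\]
with $\eta\le \eta_0/2$ and $C$ depending only on the initial data and $\rho_0$.

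The case $\widetilde h=h_{n+1}$ is handled by the same scheme, with the only difference that its evolution equation carries the extra zero-order inhomogeneous term
\[
F:=-\tfrac{1}{2\sqrt{\mathcal I}}\bigl(\textstyle\sum_{i,j=2}^{n}h_{ii}h_{jj}-\sum_{i=2}^{n}h_{ii}^2\bigr)+\mathcal O(x_{n+1}).
\]
By (\ref{eq4: Dg in h}) and (\ref{eq5: Dg in h}), together with Corollary~\ref{bound:g_ii} and the bounds $|gR_{g,2}|\le C$, $|gg_{11}|\le C$ obtained at the end of Section~\ref{sec:derivative}, the quantity $F$ is uniformly bounded on $\mathcal B_{\eta_0}$. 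The equation for $h_{n+1}$ thus has the same principal part as in (\ref{eq:tildeh_t}) plus a bounded source, so the interior $C^{\alpha}_s$ estimate of the preceding lemma applies verbatim (the proof of that estimate—following Theorem~3.1 of \cite{daskalopoulos-lee03}—is a De Giorgi/Krylov--Safonov-type iteration that absorbs a bounded inhomogeneity without change in the exponent $\alpha$). Combining this with the $C^0$ bound on $h_{n+1}$ completes the proof.

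The main obstacle I expect is step three: one must check that the $C^{\alpha}_s$ regularity result quoted for homogeneous equations still applies in the presence of the bounded source term $F$, and that the ``$\mathcal O(x_{n+1})$'' remainders in the principal part do not degrade the ellipticity needed by that theorem. Both points reduce to choosing $\eta$ small enough that the quantitative hypotheses on the coefficient matrix are satisfied uniformly on $\mathcal B_\eta$, and to observing that a bounded right-hand side enters linearly and therefore contributes only an additive constant to the final Hölder bound.
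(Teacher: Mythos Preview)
Your proposal is correct and follows essentially the same approach as the paper: the paper likewise observes that $h_t$ and $h_i$ satisfy the degenerate equation (\ref{eq:tildeh_t}) with uniformly elliptic coefficients and applies the preceding lemma, and then handles $h_{n+1}$ by noting that its equation differs only by the bounded zero-order term $-\tfrac{1}{2\sqrt{\mathcal I}}\bigl(\sum_{i,j=2}^n h_{ii}h_{jj}-\sum_{i=2}^n h_{ii}^2\bigr)+\mathcal O(x_{n+1})$, so that the same estimate applies. Your version is slightly more explicit about the $C^0$ input and the absorption of the bounded source, but the argument is the same.
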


Thus, we see that \(h \in C^{1,\alpha}_s (\mathcal{B}_\eta)\).

\subsection{\(C^{2, \alpha}_s\) estimates}
For the point \(P=(z_{n+1}, z_2, \ldots, z_n)\) on the free boundary \(\Gamma(t)\), let us denote \(z=z_{n+1}\) and \(y=(z_2, \ldots, z_n)\). Let \(C^{2,\alpha}_s (\mathcal{B}_\eta )\) be the space of functions on \(\mathcal{B}_\eta \)
\begin{equation}
C^{2,\alpha}_s (\mathcal{B}_\eta )=\{h \ \vert \ h , h_t, h_{n+1}, h_i, x_{n+1} h_{n+1, n+1},  \sqrt{x_{n+1}} h_{n+1, i}, h_{ij} \in C^{\alpha}_s (\mathcal{C}_\eta), i,j =2, \ldots n \},
\end{equation}
where the parabolic box \(\mathcal{B}_\eta\) is centered at the point \((0,y_0, t_0) \in \Gamma(t_0)\).
Now, we want to get the \(C^{2,\alpha}_s\) regularity of \(h\) on \(\mathcal{B}_\eta\) from its \(C^{1,\alpha}_s\) regularity and the classical regularity theory for strictly parabolic equations, as done for the Gauss curvature flow in \(\cite{daskalopoulos-lee04}\). \\

For \(0<\mu<1\), let us \(C_\mu\) denote the parabolic cylinder
\begin{equation}
\mathcal{C}_\mu=\{ z^2+|y|^2 \leq \mu^2, -\mu^2 \leq t  \leq 0 \}.
\end{equation}

Let \(h^r\) be the dilated function of \(h\) at a point \(Q^r=(r^2, y_r, t_r) \in \mathcal{B}_\eta\)
\begin{equation}
h^r (z,y,t)=\frac{1}{r^2} h(r^2+r^2 z, y_r +r y, t_r +r^2 t).
\end{equation}

Then the evolution of \(h^r\) is as follows.
\begin{lemma}
In our coordinate where \(g_i=0\) for \(i =2 , \ldots ,n\) and \(g_{ij}=0\) for \(i \neq j\), 
\begin{equation}
\begin{split}
h^r_t &=  -\frac{\tilde{z} }{ (h^r_{n+1})^2 \sqrt{\mathcal{I}} } \bigg(1-\frac{ (r^2 \tilde{z})^2}{\mathcal{I}} \bigg) \big( \sum_{i=2}^n h^r_{ii} \big) h^r_{n+1, n+1}   +\frac{1}{h^r_{n+1}\sqrt{\mathcal{I}} } \bigg(1-\frac{ (r^2 \tilde{z})^2}{\mathcal{I}} \bigg)\sum_{i=2}^n h^r_{ii}  \\ 
&\ -\frac{r^2 \tilde{z} }{2\sqrt{\mathcal{I}}} \sum_{i,j=2}^n \big(h^r_{ii}h^r_{jj} -(h^r_{ij})^2 \big) . \\
\end{split}
\end{equation}

\end{lemma}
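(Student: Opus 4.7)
The plan is a direct computation resting on two ingredients: (i) the coordinate simplification of the full evolution equation \eqref{eq:h_t} for $h$ in the special frame $g_i=0$ ($i\neq 1$), $g_{ij}=0$ ($i\neq j$), and (ii) the chain-rule scaling identities for the dilated function $h^r$. No analytic estimate enters; the whole statement is an algebraic reduction.

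First I would record the scaling relations. Writing $\tilde z=1+z$ so that $x_{n+1}=r^2\tilde z$ under $h^r(z,y,t)=r^{-2}h(r^2\tilde z,\,y_r+ry,\,t_r+r^2 t)$, the chain rule gives $h^r_t=h_t$, $h^r_{n+1}=h_{n+1}$, $h^r_i=r^{-1}h_i$, $h^r_{n+1,n+1}=r^2 h_{n+1,n+1}$, $h^r_{n+1,i}=r\,h_{n+1,i}$, and $h^r_{ij}=h_{ij}$ for $2\leq i,j\leq n$, all evaluated at the shifted base point. Since the right-hand side of \eqref{eq:h_t} is polynomial in these derivatives and in $\mathcal{I}$, substituting into it is a mechanical bookkeeping task once \eqref{eq:h_t} itself has been simplified.

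Next I would invoke the coordinate assumption via \eqref{eq1: Dg in h}--\eqref{eq2: Dg in h}: the choice $g_i=0$ ($i\neq 1$) and $g_{ij}=0$ ($i\neq j$) translates into $h_i=0$ and $h_{n+1,i}=0$ for $i\geq 2$ together with $h_{ij}=0$ for $i\neq j$ with $2\leq i,j\leq n$. Plugging these zeros into \eqref{eq:h_t} annihilates every term carrying any of these factors. What survives organises into three pieces: from the first block, $-\tfrac{x_{n+1}}{\sqrt{\mathcal{I}}\,h_{n+1}^2}h_{n+1,n+1}\sum_i h_{ii}-\tfrac{x_{n+1}}{2\sqrt{\mathcal{I}}}\sum_{i,j}(h_{ii}h_{jj}-h_{ij}^2)$; from the second block, $\tfrac{1}{h_{n+1}\sqrt{\mathcal{I}}}\sum_i h_{ii}$; and from the $\mathcal{I}^{-3/2}$-weighted third block, $\tfrac{x_{n+1}^2}{\mathcal{I}^{3/2}}\bigl(-\tfrac{1}{h_{n+1}}\sum_i h_{ii}+\tfrac{x_{n+1}}{h_{n+1}^2}h_{n+1,n+1}\sum_i h_{ii}\bigr)$. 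Collecting the two contributions proportional to $h_{n+1,n+1}\sum h_{ii}$, and separately the two proportional to $\sum h_{ii}$, factors out $1-x_{n+1}^2/\mathcal{I}$ and produces a compact expression for $h_t$ in these coordinates.

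Finally I would substitute the scaling identities. With $x_{n+1}=r^2\tilde z$ the combination $\tfrac{x_{n+1}}{h_{n+1}^2}h_{n+1,n+1}$ becomes $\tfrac{r^2\tilde z}{(h^r_{n+1})^2}\cdot r^{-2}h^r_{n+1,n+1}=\tfrac{\tilde z}{(h^r_{n+1})^2}h^r_{n+1,n+1}$, while $\sum h_{ii}$, $\sum_{i,j}(h_{ii}h_{jj}-h_{ij}^2)$ and $h_{n+1}$ are scale-invariant; the remaining prefactor $\tfrac{x_{n+1}}{2\sqrt{\mathcal{I}}}$ of the off-diagonal term transfers the explicit $r^2$ to $\tfrac{r^2\tilde z}{2\sqrt{\mathcal{I}}}$, giving the claimed identity. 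The only real obstacle is combinatorial rather than analytic: correctly identifying which among the many sums in \eqref{eq:h_t} survive the coordinate specialisation and recognising that the first-block and third-block leftovers recombine into the factor $1-(r^2\tilde z)^2/\mathcal{I}$ displayed in the statement.
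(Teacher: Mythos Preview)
Your proposal is correct and follows the same route as the paper: record the chain-rule scaling identities between derivatives of $h$ and $h^r$, specialise \eqref{eq:h_t} using $h_i=h_{n+1,i}=0$ and $h_{ij}=0$ for $i\neq j$, and substitute. The paper's own proof is terser---it lists $\mathcal{I}=(h^r_{n+1})^2+r^4\tilde z^2+r^6\tilde z^2\sum (h^r_i)^2$ and the scaling relations and then says ``from which we get the result''---so your more explicit bookkeeping of which terms survive and how they recombine into the factor $1-(r^2\tilde z)^2/\mathcal{I}$ is just a fleshed-out version of the same argument.
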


\begin{proof}
Let \(\tilde{z}=1+z\). Then
\begin{equation}
\mathcal{I}_{\vert(r^2+r^2 z, y_r +r y, t_r +r^2 t)} = (h^r_{n+1})^2 + r^4 (\tilde{z})^2 +r^6 (\tilde{z})^2 \sum_{i=2}^n (h^r_i)^2  \\
\end{equation}
is uniformly bounded below and above. And for \(i,j =2, \ldots, n\), at \((r^2+r^2 z, y_r +r y, t_r +r^2 t)\), we have
\begin{equation}
h_{n+1} = h^r_{n+1},\ h_i =r h^r_i , \ h_{n+1,n+1} =\frac{1}{r^2} h^r_{n+1,n+1} ,\ h_{n+1,i} =\frac{1}{r} h^r_{n+1,i} ,\ h_{ij} = h^r_{ij} .\\
\end{equation}
from which we get the result.
\end{proof}

\begin{lemma}
For any \(0<\mu_0<1\), there exists a constant \(C>0\) depending on \(\mu_0\), \(\rho_0\) and the initial data such that
\begin{equation}
\Vert h^r \Vert_{C^\infty (\mathcal{C}_\mu)} \leq C
\end{equation}
for all \(0<\mu <\mu_0\).
\end{lemma}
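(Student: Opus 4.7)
The key observation is that on $\mathcal{C}_{\mu_0}$ with $\mu_0<1$, the rescaled vertical coordinate $\tilde z=1+z$ remains in the interval $[1-\mu_0,1+\mu_0]$, so the degeneracy factor $x_{n+1}=r^2\tilde z$ is comparable to $r^2$. Hence the evolution equation for $h^r$ derived in the preceding lemma becomes uniformly (strictly) parabolic in the rescaled variables, and the goal is to combine this non-degeneracy with the $C^{1,\alpha}_s$ bounds from the previous subsection and the pointwise second-derivative bounds from Section \ref{sec:derivative} to run classical parabolic bootstrap.

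First I would verify uniform ellipticity. The derivative of the right-hand side in the preceding lemma with respect to $h^r_{n+1,n+1}$ is $-\frac{\tilde z}{(h^r_{n+1})^2\sqrt{\mathcal{I}}}(1-(r^2\tilde z)^2/\mathcal{I})\sum_{i\geq 2} h^r_{ii}$, which is strictly positive because $\sum_{i\geq 2}h^r_{ii}=-h_{n+1}\sum_{i\geq 2} g_{ii}$ is uniformly negative by the relations (\ref{eq2: Dg in h}), Lemmas \ref{upperbound:Dg}--\ref{lowerbound:Dg}, and Corollary \ref{bound:g_ii}. The derivative with respect to each $h^r_{ii}$ is dominated, for small $r$, by the strictly positive contribution $(1-O(r^2))/(h^r_{n+1}\sqrt{\mathcal{I}})$ coming from the second term; the $\sigma_2$-type quadratic piece in the equation carries an explicit factor $r^2$ and contributes only a controlled perturbation. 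Hence the linearization at $h^r$ is uniformly parabolic on $\mathcal{C}_{\mu_0}$ with ellipticity constants independent of $r$.

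Second I would promote the $C^{1,\alpha}_s$ estimates into uniform $C^\alpha$ bounds (in the ordinary parabolic metric) for $h^r, h^r_t, h^r_{n+1}$ on $\mathcal{C}_{\mu_0}$: since $x_{n+1}=r^2\tilde z\gtrsim r^2$ is bounded away from zero on this set, the singular distance $s$ pulled back under the dilation $h\mapsto h^r$ is Lipschitz-equivalent to the standard parabolic distance. Together with the bounds $|x_{n+1}h_{n+1,n+1}|=|g h_{n+1}^3 g_{11}|\leq C$ (from $|gg_{11}|\leq C$ and $g_{11}\geq -C$), $|h_{ii}|=|h_{n+1}g_{ii}|\leq C$, and the analogous bound for $h_{ij}$, this yields a uniform $L^\infty$ bound on $h^r_{n+1,n+1},h^r_{ii},h^r_{ij}$, and hence a uniform $W^{2,\infty}\cap C^{1,\alpha}(\mathcal{C}_{\mu_0})$ bound for $h^r$.

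Third, viewing the evolution equation for $h^r$ as a fully nonlinear uniformly parabolic equation whose coefficients depend only on $r,\tilde z, h^r_{n+1}, h^r_i$---quantities we have just shown are uniformly Hölder continuous on $\mathcal{C}_{\mu_0}$---I apply the Krylov-Evans interior $C^{2,\alpha}$ theory for concave (up to the $O(r^2)$ perturbation) uniformly parabolic equations to upgrade $C^{1,\alpha}$ to a uniform $C^{2,\alpha}(\mathcal{C}_\mu)$ estimate for $\mu<\mu_0$. Once this is achieved, differentiating the equation successively in $t$, $y_i$, and $z$ produces linear uniformly parabolic equations for the derivatives of $h^r$ with $C^\alpha$ coefficients, so a standard iteration of linear Schauder estimates on shrinking cylinders gives uniform $C^{k,\alpha}(\mathcal{C}_\mu)$ bounds for every $k$, hence the claimed $C^\infty$ estimate. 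The main obstacle is the initial $C^{2,\alpha}$ step: because the operator mixes bilinear, linear, and $\sigma_2$-type pieces in $D^2 h^r$, one must verify that it falls within the scope of Krylov's theorem on the relevant matrix cone---alternatively, use the Hölder bounds on all individual second derivatives to rewrite the equation as a linear uniformly parabolic equation with $C^\alpha$ coefficients and apply linear Schauder directly; after that, the bootstrap is routine.
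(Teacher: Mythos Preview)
Your proposal is correct and follows essentially the same line as the paper's proof: both arguments observe that on $\mathcal{C}_{\mu_0}$ the coordinate $\tilde z=1+z$ is bounded away from zero, verify that the rescaled equation for $h^r$ is uniformly parabolic with ellipticity constants independent of $r$, and then invoke interior regularity for fully nonlinear uniformly parabolic equations. The paper is terser---it cites Wang's regularity theory \cite{wang92a,wang92b} to pass directly from an $L^\infty$ bound on $h^r$ (which follows from the boundedness of $h_{n+1}$) to the $C^\infty$ bound, whereas you spell out the Evans--Krylov $C^{2,\alpha}$ step and the subsequent Schauder bootstrap; your intermediate use of the $C^{1,\alpha}_s$ estimates and the a priori second-derivative bounds is more detailed than necessary but not incorrect.
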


\begin{proof}
If \(P=(z, y,t) \in \mathcal{C}_\mu\) with \(0 < \mu <1\), then \(\tilde{z} \geq 1-\mu^2 >0\). As the derivatives of \(h\) are uniformly bounded, by the relation between the derivatives of \(h^r\) and \(h\) above, \(h^r\) satisfies a uniformly parabolic equation with ellipticity constant independent of \(r\). \\

Hence, from the regularity of solutions to fully nonlinear uniformly parabolic equations (see Wang \cite{wang92a} and Wang \cite{wang92b}), \(\Vert h^r \Vert_{C^\infty (\mathcal{C}_\mu) }\) is, up to a uniform constant, bounded by \(\Vert h^r \Vert_{L^\infty (\mathcal{C}_{\mu_0})}\)  for all \(0<\mu<\mu_0<1\). Since \(h_{n+1}\) is bounded in \(\mathcal{B}_{\eta}\), \(\Vert h^r \Vert_{L^\infty (\mathcal{C}_{\mu_0})}\) is uniformly bounded. 
\end{proof}

Hence Lemma 6.4, Lemma 6.5 and Lemma 6.6 in Daskalopoulos and Lee \(\cite{daskalopoulos-lee03}\) also hold for our Gauss curvature flow with a flat side as well, and we get the following lemma.
\begin{lemma}
There exists a constant \(C>0\), depending only on the initial data, \(\rho_0\) and \(\eta\), such that for any two points \(P_1=(z_1, y_1, t_1)\) and \(P_2=(z_2, y_2, t_2)\) in \(\mathcal{B}_{\frac{\eta}{2} }\), we have
\begin{equation}
|z_1 h_{n+1, n+1}(P_1)-z_2 h_{n+1,n+1}(P_2)|+\sum_{i=2}^n |\sqrt{z_1} h_{n+1, i}(P_1)-\sqrt{z_2} h_{n+1,i}(P_2)| \leq C s(P_1, P_2)^\alpha
\end{equation}
for our metric distance \(s(P_1, P_2)=\sqrt{|z_1-z_2|}+|y_1-y_2|+\sqrt{|t_1-t_2|} \). In other words, 
\begin{equation}
x_{n+1} h_{n+1, n+1} \in \mathcal{C}^\alpha (\mathcal{B}_{\frac{\eta}{2}} ),  \sqrt{x_{n+1} } h_{n+1, i} \in \mathcal{C}^\alpha (\mathcal{B}_{\frac{\eta}{2}} ) \text{ for } i=2, \ldots , n. 
\end{equation}
\\
\end{lemma}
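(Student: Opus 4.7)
The plan is to follow the dilation/interpolation scheme of Lemmas~6.4--6.6 in Daskalopoulos--Lee~\cite{daskalopoulos-lee03}, extended to higher dimensions since the dilated equation of the previous subsection retains the same uniformly parabolic structure. Writing a generic point as $P=(r^2(1+z),\,y_r+ry,\,t_r+r^2 t)$, the scale-invariance identities
\[
F(P):=x_{n+1}h_{n+1,n+1}(P)=(1+z)\,h^r_{n+1,n+1}(z,y,t),\quad G_i(P):=\sqrt{x_{n+1}}\,h_{n+1,i}(P)=\sqrt{1+z}\,h^r_{n+1,i}(z,y,t)
\]
together with the uniform $C^\infty(\mathcal{C}_\mu)$ bound on $h^r$ give uniform Lipschitz control of $F$ and $G_i$ in the dilated parabolic metric on $\mathcal{C}_{1/2}$. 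Working near a fixed reference point on the free boundary, the adapted frame from Section~\ref{sec:derivative} is close to the fixed frame defining $h$, so the bounds $|g_{11}|\leq C$ and (from Corollary~\ref{bound:D^2 f}) $|f_{1i}|^2\leq Cx_{n+1}$ transfer to the fixed frame, yielding the pointwise decay
\[
|F(P)|\leq C\,x_{n+1},\qquad |G_i(P)|\leq C\,\sqrt{x_{n+1}}.
\]

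Fix $P_1,P_2\in\mathcal{B}_{\eta/2}$ with $z_1\geq z_2\geq 0$ and set $d:=s(P_1,P_2)$. Split into two regimes. In the \emph{interior regime} $\sqrt{z_1}\geq Kd$ with $K$ a fixed large constant, dilate at $P_1$ with $r=\sqrt{z_1}$: the point $P_2$ then lies inside $\mathcal{C}_{1/2}$ at dilated parabolic distance $\leq Cd/\sqrt{z_1}$, and the Lipschitz bound yields
\[
|F(P_1)-F(P_2)|+\sum_{i=2}^n |G_i(P_1)-G_i(P_2)|\leq \frac{Cd}{\sqrt{z_1}}.
\]
In the \emph{boundary regime} $\sqrt{z_1}<Kd$ both $z_1,z_2<K^2d^2$, and the pointwise decay gives
\[
|F(P_1)-F(P_2)|\leq 2CK^2d^2,\qquad |G_i(P_1)-G_i(P_2)|\leq 2CKd.
\]

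Combine the two bounds by setting the cutoff at $\sqrt{z_1}\sim d^{1-\alpha}$: the interior estimate becomes $d/\sqrt{z_1}\leq d^\alpha$, while the boundary estimates become $Cd^{2(1-\alpha)}$ for $F$ and $Cd^{1-\alpha}$ for $G_i$, both dominating $d^\alpha$ as soon as $\alpha\leq 1/2$. Since the H\"older exponent produced by the previous subsection may be shrunk without weakening the earlier results, we may assume $\alpha\leq 1/2$ from the outset and conclude
\[
|F(P_1)-F(P_2)|+\sum_{i=2}^n|G_i(P_1)-G_i(P_2)|\leq C\,s(P_1,P_2)^\alpha,
\]
which is the stated inequality.

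The main obstacle is extracting the pointwise decay $|G_i|\leq C\sqrt{x_{n+1}}$: in the adapted frame where $g_i=0$ for $i\neq 1$ the quantity $h_{n+1,i}$ blows up exactly like $1/\sqrt{x_{n+1}}$, with the rate dictated by the sharp bound on $f_{1i}$, but $h$ is expressed in the fixed Euclidean frame and so a uniform comparison of frames on the neighborhood $\mathcal{B}_{\eta/2}$ is needed. Once this is established, the rest of the argument is essentially the two-regime interpolation above.
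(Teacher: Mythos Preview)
Your overall scheme---dilate, use the uniform $C^\infty$ bound on $h^r$ for a Lipschitz estimate in the interior regime, and use pointwise decay toward the boundary in the complementary regime---is exactly the route the paper indicates by citing Lemmas~6.4--6.6 of \cite{daskalopoulos-lee03}. The gap is in your justification of the boundary-regime decay for $F=x_{n+1}h_{n+1,n+1}$.

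You assert $|F(P)|\le C\,x_{n+1}$, which is equivalent to $|h_{n+1,n+1}|\le C$, i.e.\ $|g_{11}|\le C$ in the adapted frame. The a~priori estimates of Section~\ref{sec:derivative} do \emph{not} give this: only $g_{11}\ge -C$ is proved (the Corollary following the $R_{g,2}$ lower bound), and the paper explicitly records that in general only $|g\,g_{11}|$ is bounded, with $|g_{11}|$ bounded only when $g_{11}\le 0$. So the decay $|F|\le Cx_{n+1}$ is unavailable, and your boundary-regime bound $|F(P_1)-F(P_2)|\le 2CK^2d^2$ is not justified. A similar issue affects $G_i$: from $|f_{1i}|^2\le Cx_{n+1}$ one only gets $|g_{1i}|\le C/\sqrt{x_{n+1}}$, hence $|G_i|\le C$, not $|G_i|\le C\sqrt{x_{n+1}}$.

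The mechanism that actually produces the boundary decay in \cite{daskalopoulos-lee03} is different: it uses the $C^\alpha_s$ regularity of $h_{n+1}$ already obtained in the previous subsection. Since the linearized equation satisfied by $h_{n+1}$ has no zeroth-order term, the function $h^r_{n+1}-h_{n+1}(0,y_r,t_r)$ satisfies the same dilated uniformly parabolic equation (with a bounded source scaling like $r^2$). Interior Schauder at scale $r=\sqrt{z}$ then gives
\[
|F(P)|=|h^r_{n+1,n+1}(0,0,0)|\le C\Big(\operatorname*{osc}_{\mathcal C_{1/2}} h^r_{n+1}+r^2\Big)\le C\,z^{\alpha/2},
\]
because the $s$-diameter of the dilated box is $\sim\sqrt{z}$ and $h_{n+1}\in C^\alpha_s$. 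The same reasoning yields $|G_i(P)|\le C\,z^{\alpha/2}$. With this weaker decay your two-regime interpolation still closes (the boundary regime now gives $Cd^{\alpha}$ directly once $\sqrt{z_1}\lesssim d$), and no appeal to an unproved two-sided bound on $g_{11}$ is needed.
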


Also, we get the Hölder regularity of \(h_{ii}\), \(i=2, \ldots, n\). 
\begin{lemma}
\begin{equation}
h_{ii} \in \mathcal{C}^\alpha (\mathcal{B}_{\frac{\eta}{2}} ) \text{ for } i=2, \ldots , n.
\end{equation}
\end{lemma}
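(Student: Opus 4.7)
The strategy parallels the proof of the preceding lemma, once again exploiting the rescaled function $h^r$ introduced earlier and its uniform interior smoothness $\|h^r\|_{C^\infty(\mathcal{C}_\mu)} \le C(\mu,\mu_0,\rho_0)$ valid for any $0<\mu<\mu_0<1$. The key observation, established by a direct chain-rule computation, is that the purely tangential Hessian is \emph{scale-invariant} under the rescaling:
\[
h_{ij}(P) \;=\; h^r_{ij}(Q), \qquad i,j \in \{2,\dots,n\},
\]
where $P$ and $Q$ are related through $P=(r^2(1+z),\,y_r+ry,\,t_r+r^2 t)$. This is in sharp contrast to $h_{n+1,n+1}$ (which scales by $r^2$) and $h_{n+1,i}$ (which scales by $r$), treated in the previous lemma with the compensating weights $x_{n+1}$ and $\sqrt{x_{n+1}}$; here no weight is necessary.

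Given two points $P_1,P_2\in\mathcal{B}_{\eta/2}$, I would set $r=\sqrt{\max(z_1,z_2)}$ and choose a base point $Q^r=(r^2,y_r,t_r)$ so that both preimages $Q_1,Q_2$ lie in $\mathcal{C}_{\mu_0}$. The uniform $C^\infty$-bound on $h^r$ in the rescaled, uniformly parabolic cylinder yields a H\"older estimate
\[
|h^r_{ij}(Q_1)-h^r_{ij}(Q_2)| \;\le\; C\, s^r(Q_1,Q_2)^\alpha,
\]
and combining this with the scaling of the singular distance $s^r(Q_1,Q_2)=s(P_1,P_2)/r$ together with the scale-invariance above gives
\[
|h_{ij}(P_1)-h_{ij}(P_2)| \;\le\; C\, s(P_1,P_2)^\alpha
\]
whenever $s(P_1,P_2)\lesssim r$. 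The complementary regime is handled by chaining through intermediate scales or by using the uniform $L^\infty$-bound of $h_{ij}$, itself a consequence of the same $C^\infty$-bound on $h^r$.

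The main obstacle is confirming the estimate remains uniform as $P_1,P_2$ approach the free boundary $\{x_{n+1}=0\}$, where $r\to 0$ and the rescaled cylinder shrinks within the original coordinates. The decisive input is that the $C^\infty$-estimate on $h^r$ carries a constant \emph{independent} of $r$ and of the choice of base point $Q^r$, which is exactly the uniformity already built into the framework of the preceding lemma; together with the careful scale-matching above, this ensures the constant $C$ in the final H\"older bound does not blow up near the boundary. A limit argument through interior points then extends the estimate to pairs $P_1,P_2$ with one or both sitting on $\{x_{n+1}=0\}$, using continuity of $h^r_{ij}$ up to the lower face of $\mathcal{C}_\mu$.
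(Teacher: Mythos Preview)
Your rescaling argument contains an arithmetic error that breaks the conclusion. From the scale-invariance $h_{ij}(P)=h^r_{ij}(Q)$ and the interior bound $|h^r_{ij}(Q_1)-h^r_{ij}(Q_2)|\le C\,s^r(Q_1,Q_2)^\alpha$, together with $s^r(Q_1,Q_2)=s(P_1,P_2)/r$, what actually follows is
\[
|h_{ij}(P_1)-h_{ij}(P_2)|\;\le\; C\bigl(s(P_1,P_2)/r\bigr)^\alpha \;=\; C\,r^{-\alpha}\,s(P_1,P_2)^\alpha,
\]
not $C\,s(P_1,P_2)^\alpha$. The factor $r^{-\alpha}$ blows up as the base height $r^2\to 0$, so even in the ``close'' regime $s(P_1,P_2)\lesssim r$ you do not obtain a uniform H\"older bound near the free boundary. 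The preceding lemma escapes this precisely because the quantities carry the weights $x_{n+1}$ and $\sqrt{x_{n+1}}$, and the argument in Daskalopoulos--Lee exploits the vanishing of those weights on $\{x_{n+1}=0\}$ when chaining between dyadic heights; for the unweighted, scale-invariant $h_{ij}$ there is no such compensating factor, and neither the $L^\infty$ bound nor naive chaining removes $r^{-\alpha}$.

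The paper avoids this difficulty altogether by a much simpler, non-geometric argument: it rewrites the evolution equation of $h$ in the adapted frame as an algebraic identity which can be solved for $\sum_{i=2}^n h_{ii}$, namely
\[
\sum_{i=2}^n h_{ii}\;=\;\frac{\mathcal{I}^{3/2}\,h_t+\mathcal{I}\,\mathcal{O}(x_{n+1})}{\,h_{n+1}+\mathcal{O}(x_{n+1})\,}.
\]
Every ingredient on the right---$h_t$, $h_{n+1}$, $h_i$, $x_{n+1}h_{n+1,n+1}$, $\sqrt{x_{n+1}}\,h_{n+1,i}$---has already been shown to lie in $C^\alpha_s$, and the denominator is bounded below by a positive constant near the interface, so the quotient is immediately $C^\alpha_s$. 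No rescaling is used for this step at all.
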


\begin{proof}
In our coordinate where \(g_i=0\) for \(i =2 , \ldots ,n\) and \(g_{ij}=0\) for \(i \neq j\), 
\begin{equation}
\begin{split}
&h_t = -\frac{ x_{n+1}}{2\sqrt{\mathcal{I}}} \big( \sum_{i,j=2}^n h_{ii}h_{jj} -\sum_{i=2}^n h_{ii}^2 \big) +\frac{1}{\sqrt{\mathcal{I}}} \bigg( 1 -\frac{  x_{n+1}^2}{\mathcal{I}} \bigg) \bigg( \frac{1}{h_{n+1} }  - \frac{x_{n+1}}{h_{n+1}^2 } h_{n+1, n+1}  \bigg) \sum_{i=2}^n h_{ii} \\
&= -\frac{ x_{n+1}}{2\sqrt{\mathcal{I}}} \big( \sum_{i,j=2 ; i \neq j}^n h_{ii}h_{jj} \big) +\frac{1}{ h_{n+1}^2 \mathcal{I}^{3/2} } (h_{n+1}^2 +x_{n+1}^2  \sum_{i=2}^n h_i^2 ) ( h_{n+1}  - x_{n+1} h_{n+1, n+1} ) \sum_{i=2}^n h_{ii} \\
\end{split}
\end{equation}

so that
\begin{equation}
\sum_{i=2}^n h_{ii} =\frac{\mathcal{I}^{3/2} h_t +\mathcal{I} \mathcal{O}(x_{n+1}) }{ h_{n+1} +\mathcal{O}(x_{n+1} )} .
\end{equation}

Near the free boundary, \(x_{n+1}=g\) tends to zero. Note that the denominator \(h_{n+1} +\mathcal{O}(x_{n+1}) \geq C >0\) on \(\mathcal{B}_\eta \) for some uniform constant \(C>0\), from the estimate of \(h_{n+1}\). Also, the numerator \(\mathcal{I}^{3/2} h_t +\mathcal{I} \mathcal{O}(x_{n+1}) \in \mathcal{C}^\alpha (\mathcal{B}_{\frac{\eta}{2}} )\), by the Hölder regularity of \(h_t\), \(h_{n+1}\) and \(h_i\), \(i=2,\ldots, n\).
\end{proof}

Consequently, the Hölder regularity of the derivatives of \(h\) of order one and of order two implies the following theorem.
\begin{theorem} \label{thm:C2a}
Assume that \(g=\sqrt{2f} \) is of class \(\mathcal{C}^{2+\beta} \) up to the interface \(z=0\) for some \(0<\beta<1\), and satisfies the non-degeneracy condition 
\begin{equation}
\vert Dg(x) \vert \geq \lambda \text{ and } \sum_{\tau :\ tangential} D^2_{\tau \tau} g(x)  \geq \lambda \text{ at all } x \in \Gamma,
\end{equation}
for some positive number \(\lambda>0\). Also, assume that a ball is included in the flat side: \(D_{\rho_0}={ |x|\leq \rho_0 } \subset \Sigma_0(T)\). Then there exist uniform constants \(0<\alpha<1\), \(0<C<\infty\) and \(\eta>0\) which depend only on the initial data and \(\rho_0\), such that for any free boundary point \(P_0=(x_0, y_0,t_0)\) with \(0<\tau <t_0<T\) satisfying \(n_0 :=\frac{P_0}{|P_0|}=e_1\), the function \(x=h(z,y,t)=h(x_{n+1},x_2, \ldots, x_n,t)\) satisfies the Hölder estimate
\begin{equation}
\Vert h\Vert_{\mathcal{C}^{2+\alpha}_s (\mathcal{B}_\eta)} \leq C
\end{equation}
where \(\mathcal{B}_\eta=\{ 0 \leq z \leq \eta^2, |y-y_0|\leq \eta, t_0-\eta^2 \leq t \leq t_0 \} \). 
\end{theorem}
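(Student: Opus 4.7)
The plan is to assemble the regularity results established earlier in this section. By the definition of $\mathcal{C}^{2+\alpha}_s(\mathcal{B}_\eta)$, it suffices to show uniform $\mathcal{C}^\alpha_s$ bounds on each of $h,\,h_t,\,h_{n+1},\,h_i,\,x_{n+1}h_{n+1,n+1},\,\sqrt{x_{n+1}}\,h_{n+1,i},\,h_{ij}$ for $i,j=2,\dots,n$, on the parabolic box $\mathcal{B}_\eta$ centered at a free boundary point $P_0$ with $n_0=e_1$. The hypotheses of the theorem are precisely those needed to invoke the gradient estimates (Lemmas \ref{upperbound:Dg}, \ref{lowerbound:Dg}), the tangential second-derivative estimates (Corollaries \ref{bound:g_ii}, \ref{bound:D^2 f}), and the Aronson--B\'enilan-type bound on $R_{g,2}$, all of which feed into the relations (\ref{eq1: Dg in h})--(\ref{eq5: Dg in h}) used throughout this section.

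First I would invoke the $\mathcal{C}^{1,\alpha}_s$ estimates already proved: the lemma showing $\Vert h_t\Vert_{\mathcal{C}^\alpha_s},\Vert h_{n+1}\Vert_{\mathcal{C}^\alpha_s}$, and $\Vert h_i\Vert_{\mathcal{C}^\alpha_s}\le C$ for $i=2,\dots,n$, which also yields the $\mathcal{C}^\alpha_s$ estimate on $h$ by integration along the singular distance. Next, the rescaling argument: set $h^r(z,y,t)=r^{-2}h(r^2+r^2z,y_r+ry,t_r+r^2t)$ at $Q^r=(r^2,y_r,t_r)\in\mathcal{B}_\eta$; on $\mathcal{C}_{\mu_0}$ the factor $\tilde{z}=1+z\ge1-\mu_0^2>0$ makes the equation for $h^r$ uniformly parabolic with constants independent of $r$, so Wang's interior $C^\infty$ estimates for fully nonlinear uniformly parabolic equations give $\Vert h^r\Vert_{C^\infty(\mathcal{C}_\mu)}\le C$. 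Rescaling back, this precisely translates into uniform Hölder bounds for $x_{n+1}h_{n+1,n+1}$, $\sqrt{x_{n+1}}\,h_{n+1,i}$, and the off-diagonal tangential Hessian components $h_{ij}$ with respect to the singular metric $s$.

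For the diagonal tangential components $h_{ii}$, $i=2,\dots,n$, I would solve for $\sum_{i=2}^n h_{ii}$ from the $h_t$-equation rewritten as
\begin{equation*}
\sum_{i=2}^n h_{ii}\ =\ \frac{\mathcal{I}^{3/2}h_t+\mathcal{I}\,\mathcal{O}(x_{n+1})}{h_{n+1}+\mathcal{O}(x_{n+1})},
\end{equation*}
whose denominator is bounded below and whose numerator already lies in $\mathcal{C}^\alpha_s$. To disentangle the individual $h_{ii}$ from the trace, I would apply the same argument after rotating the tangential frame in $(y_2,\dots,y_n)$; the uniformity of all bounds under such orthogonal changes combined with the off-diagonal estimates from the rescaling step yields $h_{ij}\in\mathcal{C}^\alpha_s$ for every pair $i,j=2,\dots,n$.

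The main obstacle is the interplay between the pointwise diagonalizing frame (where $g_i=0$ for $i\neq1$ and $g_{ij}=0$ for $i\neq j$), in which all pointwise identities and evolution equations of this section are written, and the fixed ambient frame in which the Hölder norms are measured. This is resolved by observing that the scalar trace $\sum_i h_{ii}$ and the non-degeneracy of the rescaled equation are frame-independent, so the rescaling-based Hölder control of all second derivatives transfers directly to the fixed frame without any further cancellation required.
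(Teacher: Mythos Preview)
Your proposal is correct and follows essentially the same route as the paper: the theorem is simply the assembly of the preceding lemmas on $\mathcal{C}^{1,\alpha}_s$ regularity of $h_t,h_{n+1},h_i$, the rescaling argument yielding $x_{n+1}h_{n+1,n+1},\sqrt{x_{n+1}}\,h_{n+1,i}\in\mathcal{C}^\alpha_s$, and the recovery of $\sum_{i\ge2}h_{ii}$ from the evolution equation. Your extra care in passing from the trace $\sum_i h_{ii}$ to the individual tangential components via rotation, and in addressing the diagonalizing-frame versus fixed-frame issue, makes explicit two points that the paper leaves implicit.
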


\section{Conclusion: proof of the main theorem} \label{sec:existence}
\subsection{Short-time existence near the interface}

The last results have an immediate consequence, the short-time existence of the \(h\) near a free boundary point.
\begin{lemma}
There exists a unique solution \(h \in \mathcal{C}^{2+\alpha}_s (\mathcal{B}_\eta)\) of the equation (\ref{eq:h_t}) for a short time \(T>0\) in \(\mathcal{B}_\eta\) for some constant \(\eta>0\) as in Theorem \ref{thm:C2a}.
\end{lemma}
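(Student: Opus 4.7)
The plan is to establish short-time existence by a linearization-plus-fixed-point argument in the Hölder space $C^{2+\alpha}_s(\mathcal{B}_\eta)$, following \cite{daskalopoulos-lee03, daskalopoulos-lee04} and extending those arguments to our higher-dimensional scalar curvature setting.

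First I would rewrite (\ref{eq:h_t}) schematically as
\begin{equation*}
h_t = x_{n+1}\, a_{n+1,n+1}(Dh, D^2 h)\, h_{n+1,n+1} + \sum_{i=2}^n a_{ii}(Dh, D^2 h)\, h_{ii} + b_{n+1}(Dh, D^2 h)\, h_{n+1} + F(Dh, D^2 h),
\end{equation*}
and observe, using the a priori bounds from Theorem \ref{thm:C2a} together with the relations (\ref{eq1: Dg in h})--(\ref{eq5: Dg in h}), that along any candidate solution staying close to the reference profile, the matrix $(a_{ij})$ is uniformly positive definite and bounded, $b_{n+1}$ and $F$ are uniformly controlled, and the sole degeneracy is the factor $x_{n+1}$ in front of $h_{n+1,n+1}$. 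This is precisely the degeneracy structure of the model equation treated in \cite{daskalopoulos-lee03}.

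Next I would freeze the coefficients at an auxiliary function $\widetilde h$ lying in a small $C^{2+\alpha}_s$-ball around the initial data $h_0$ and solve the linear Cauchy--Dirichlet problem
\begin{equation*}
\begin{cases}
h_t = x_{n+1}\, a_{n+1,n+1}(D\widetilde h, D^2 \widetilde h)\, h_{n+1,n+1} + \sum_{i=2}^n a_{ii}(\cdot)\, h_{ii} + b_{n+1}(\cdot)\, h_{n+1} + F(D\widetilde h, D^2 \widetilde h), \\
h|_{t=\tau} = h_0, \qquad h|_{\partial_\ell \mathcal{B}_\eta} = h_{\mathrm{bdry}},
\end{cases}
\end{equation*}
where the lateral boundary data $h_{\mathrm{bdry}}$ is supplied by the strictly convex part of the flow where the original equation is uniformly parabolic in the classical sense. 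Existence of a unique $C^{2+\alpha}_s$ solution with a Schauder-type estimate
\begin{equation*}
\|h\|_{C^{2+\alpha}_s(\mathcal{B}_\eta)} \le C\bigl(\|h_0\|_{C^{2+\alpha}_s} + \|h_{\mathrm{bdry}}\|_{C^{2+\alpha}_s} + \|F(D\widetilde h, D^2 \widetilde h)\|_{C^\alpha_s}\bigr)
\end{equation*}
is furnished by the linear theory for this degenerate model from \cite{daskalopoulos-lee03}. A standard contraction argument on the time slab $\tau \le t \le \tau + T$ with $T$ chosen small, exploiting the Lipschitz dependence of $a_{ij}, b_{n+1}, F$ on $(Dh, D^2 h)$ and a small-$T$ smallness gain, then yields the unique fixed point $h \in C^{2+\alpha}_s(\mathcal{B}_\eta)$.

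The main obstacle is adapting the weighted Schauder theory for the linear degenerate model, proved in \cite{daskalopoulos-lee03} in two dimensions, to dimension $n+1 \ge 4$. The key structural point, visible from the linearization (\ref{eq:tildeh_t}), is that only $h_{n+1,n+1}$ carries the degenerate weight $x_{n+1}$, while the tangential second derivatives $h_{ii}$ appear with uniformly positive bounded coefficients. Hence the singular-in-one-direction structure of the two-dimensional model is preserved verbatim, and the weighted Schauder estimates transfer by an essentially identical argument; the only adjustments needed are the bookkeeping for the $n-1$ additional non-degenerate tangential directions and the corresponding reinterpretation of the singular metric $s$ on the $(n+1)$-dimensional parabolic box $\mathcal{B}_\eta$.
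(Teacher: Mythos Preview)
Your proposal is correct and follows essentially the same strategy as the paper: linearize, invoke the weighted Schauder theory for the one-direction-degenerate model, and pass to the nonlinear equation by a functional-analytic argument. The only cosmetic differences are that the paper cites Theorem~7.1 of \cite{daskalopoulos-hamilton99} (rather than \cite{daskalopoulos-lee03}) for the linear theory and phrases the last step as an application of the Inverse Function Theorem rather than an explicit contraction mapping; these amount to the same thing.
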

\begin{proof}
The linearized equation (\ref{eq:tildeh_t}) of \(Lh=0\) (\ref{eq:h_t}) satisfies the condition of Theorem 7.1. in \cite{daskalopoulos-hamilton99} with \(k=0\) and minor changes in higher dimension \(n \geq 3\). Applying the Inverse Function Theorem, we get the existence result for \(h\). 
\end{proof}

Thus, we get the short-time existence of \(h\) along the interface.
\begin{theorem} \label{thm:short}
There exists a unique smooth solution \(f\) to the scalar curvature flow for a short time \(T>0\).
\end{theorem}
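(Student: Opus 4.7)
The plan is to combine the local existence of $h$ near the free boundary, established in the previous lemma, with the standard parabolic theory in the strictly convex interior, and then glue the two pieces together using the uniqueness of classical solutions.

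First, I would cover a neighborhood of the initial interface $\Gamma(0)$ by finitely many parabolic boxes $\mathcal{B}_\eta^{(k)}$ of the type appearing in Theorem \ref{thm:C2a}, centered at points $P_0^{(k)} \in \Gamma(0)$, using the compactness of $\Gamma(0)$ and after performing, for each $k$, a rotation so that the outward normal $n_0^{(k)}$ is aligned with $e_1$. Inside each box the previous lemma produces a unique solution $h^{(k)} \in \mathcal{C}^{2+\alpha}_s(\mathcal{B}_\eta^{(k)})$ of the transformed equation (\ref{eq:h_t}) on a time interval $[0,T_k)$ with $T_k>0$. Inverting the hodograph-type change of coordinates described at the beginning of Section \ref{sec:holder} recovers the pressure-like function $g^{(k)}=\sqrt{2f^{(k)}}$, smooth up to and across the local piece of the free boundary $\{x_{n+1}=0\}$; the relations $g=x_{n+1}\circ h^{-1}$ and the bounds (\ref{eq1: Dg in h})--(\ref{eq5: Dg in h}) ensure that $|\nabla g^{(k)}|$ stays bounded away from zero, so $f^{(k)}=\tfrac12 (g^{(k)})^2$ is a genuine $C^{1,1}$ convex solution of (\ref{f_t}) in its patch.

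Next, on the strictly convex interior set, i.e.\ on any compact subset of $\{f>0\}$, the equation (\ref{f_t}) is uniformly parabolic with smooth coefficients depending on $f, \nabla f, \nabla^2 f$, since in such a region the scalar curvature $\sigma_2$ is a smooth, elliptic operator in $D^2 f$ on the class of convex graphs bounded away from degeneracy. Standard short-time existence theory for fully nonlinear parabolic equations (Krylov--Safonov, Schauder, or the inverse function theorem argument of \cite{daskalopoulos-hamilton99} in the non-degenerate case) provides a unique smooth solution $f^{\mathrm{int}}$ on $\Sigma_1(0)$ for some short time $T_{\mathrm{int}}>0$, matching the initial data.

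Now I glue the pieces. In each overlap $\mathcal{B}_\eta^{(k)}\cap \mathcal{B}_\eta^{(l)}$, and in the overlap of each $\mathcal{B}_\eta^{(k)}$ with the interior region $\{f>0\}$, all the functions constructed are classical solutions of (\ref{f_t}) (in the strictly convex part) or correspond to solutions across the interface in the $h$-formulation (which is a strictly parabolic equation in $h$ once one factors the degeneracy into the coefficient of $h_{n+1,n+1}$). Strict parabolicity of (\ref{f_t}) on $\{f>0\}$ gives uniqueness there, and the comparison/uniqueness for the degenerate equation on the boxes via the subsolution/supersolution framework of Section \ref{sec:speed} (Lemma \ref{lem:ineq:f_e} and its consequences) gives uniqueness across the interface; hence the local solutions agree on all overlaps. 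Setting
\begin{equation}
T := \min\bigl(T_{\mathrm{int}},\, \min_k T_k\bigr) > 0
\end{equation}
and patching by a partition of unity subordinate to the covering yields a single $f$ defined on all of $\Omega(t)$ for $0\le t<T$, which is $C^\infty$ in the interior $\{f>0\}$ and smooth up to the interface in the sense that $g=\sqrt{2f}$ lies in $C^{2+\alpha}_s$ near $\Gamma(t)$, as required.

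The main obstacle I expect is the gluing step together with the verification that the $h$-coordinate solution on each patch really does correspond to a convex graph solution of (\ref{f_t}) compatible with its neighbors. One has to check that the normal direction used to define the hodograph transform in each $\mathcal{B}_\eta^{(k)}$ is nearly aligned with the true moving normal of $\Gamma(t)$ for $t\in[0,T]$ (this is where Lemma \ref{bound:support} is used), and that the resulting local $f^{(k)}$'s extend consistently. The non-degeneracy estimates $|\nabla g|\geq c$ from Lemma \ref{lowerbound:Dg} and the second-derivative bounds from Corollary \ref{bound:g_ii} and Corollary \ref{bound:D^2 f} are exactly what is needed to keep the hodograph transform a diffeomorphism on each patch, and these uniform bounds make the final short time $T$ independent of the patch.
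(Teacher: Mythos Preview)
Your proposal is correct and follows essentially the same approach as the paper: cover the compact interface $\Gamma(0)$ by finitely many local charts, apply the previous lemma to obtain a short-time solution $h^{(k)}\in\mathcal{C}^{2+\alpha}_s$ in each chart, and deduce short-time existence of $g$ and $f$. The paper's own proof is much terser---it does not spell out the interior existence on $\{f>0\}$, the uniqueness-based gluing on overlaps, or the role of the non-degeneracy estimates in keeping the hodograph transform invertible---so your version is a legitimate elaboration of the same argument rather than a different route.
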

\begin{proof}
Let us cover the interface \(\Gamma=\Gamma(0)\) into balls, each of which is centered at a free boundary point on it. We use a coordinate in each ball such that the free boundary point is \(0\), the free boundary is flat, and the area inside the flat side is the upper half plane. Then we have the short time existence of \(h\in \mathcal{C}^{2+\alpha}_s (\mathcal{B}_\eta) \) inside each ball. Since \(\Gamma\) is compact, it can be covered by a finite number of such balls as above. The short-time existence of the functions \(g=\sqrt{2f}\) and \(f\) follows.
\end{proof}

\subsection{All-time \(C^{\infty}\) regularity up to the interface}
Now, we prove a lemma to prove the main theorem.

\begin{lemma} \label{lem:holder of g}
Let \(g\) be the solution which is smooth up to the interface on \(0<t<T\) and \(T<T_c\). Then \(g(x)=g(x,T)\) belongs to the class \(\mathcal{C}^{2+\beta}_s\) for some \(0<\beta<1\) and satisfies the non-degenaracy conditions \(\vert D g(x) \vert \geq \lambda\) and  \(D^2_{\tau \tau} g(x) \geq \lambda\) for any \(x \in \Gamma\) for some constant \(\lambda>0\).
\end{lemma}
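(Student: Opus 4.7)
The plan is to pass to the limit $t \to T^-$ in the uniform estimates already established and then verify the two non-degeneracy conditions at time $T$ from the pointwise bounds proved in Sections \ref{sec:speed}--\ref{sec:holder}.

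\textbf{Step 1 (regularity at time $T$).} Fix any free boundary point $P_0 \in \Gamma(T)$. Since $T<T_c$, the flat side still has positive measure and the disc $D_{\rho_0}$ is contained in $\Sigma_0(T)$, so the hypotheses of Theorem \ref{thm:C2a} are satisfied uniformly on the interval $[\tau,T]$ for any $0<\tau<T$. Theorem \ref{thm:C2a} therefore produces constants $0<\alpha<1$, $\eta>0$ and $C<\infty$, independent of $t\in[\tau,T)$, such that the local height function $h(z,y,t)$ obtained from the coordinate change at $P_0$ satisfies $\|h\|_{\mathcal{C}^{2+\alpha}_s(\mathcal{B}_\eta(P_0,t))}\leq C$. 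By the Arzel\`a--Ascoli theorem in the singular H\"older space $\mathcal{C}^{2+\alpha}_s$, the family $\{h(\cdot,t)\}_{t<T}$ is precompact in $\mathcal{C}^{2+\beta}_s$ for any $0<\beta<\alpha$, and its (unique, since $g$ is smooth up to the interface for $t<T$) limit $h(\cdot,T)$ lies in $\mathcal{C}^{2+\beta}_s(\mathcal{B}_\eta)$. Covering the compact interface $\Gamma(T)$ by finitely many such neighborhoods and inverting the coordinate change (which is admissible because $h_{n+1}$ stays uniformly bounded away from $0$ and $\infty$ by Lemmas \ref{upperbound:Dg} and \ref{lowerbound:Dg}) transfers this regularity to $g(\cdot,T)\in\mathcal{C}^{2+\beta}_s$.

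\textbf{Step 2 (non-degeneracy of $|Dg|$).} Lemmas \ref{upperbound:Dg} and \ref{lowerbound:Dg} provide a constant $c>0$, uniform in $t\in[0,T]$, with $c\leq|\nabla g(\cdot,t)|\leq c^{-1}$ on the set $\{0\leq g(\cdot,t)\leq 1\}$. Passing to the limit $t\to T^-$ using the $\mathcal{C}^{2+\beta}_s$ convergence of Step~1 yields $|Dg(x,T)|\geq c$ for every $x\in\Gamma(T)$, so the first non-degeneracy condition holds with $\lambda\leq c$.

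\textbf{Step 3 (non-degeneracy of $D^2_{\tau\tau}g$).} In the local coordinate system at a boundary point where $g_1=g_\nu$ and $g_i=0$ for $i\neq 1$, the tangential Laplacian is $\sum_{i\neq 1} g_{ii}$, and Corollary \ref{bound:g_ii} supplies a uniform constant $c'>0$ (independent of $t$) with $c'\leq \sum_{i\neq 1} g_{ii}\leq (c')^{-1}$ near the interface. Combined with the uniform lower bound on $R_{g,2}$ and the boundedness of $g_{11}$ from the corresponding corollary, the full tangential Hessian $D^2_{\tau\tau}g$ is uniformly bounded below by a positive constant, since its trace is bounded below by $c'$ and its eigenvalues are nonnegative (convexity of the level sets gives $g_{ii}\geq 0$ for $i\neq 1$). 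Passing to $t=T$ through the $\mathcal{C}^{2+\beta}_s$ convergence yields $D^2_{\tau\tau}g(x,T)\geq \lambda$ for all $x\in\Gamma(T)$ and every tangential direction $\tau$, with $\lambda := \min(c, c'/(n-1))>0$.

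\textbf{Main obstacle.} The delicate point is Step~1: one must ensure that the $\mathcal{C}^{2+\alpha}_s$ bounds of Theorem \ref{thm:C2a} are genuinely uniform as $t\to T^-$, i.e.\ that the constant $C$ and the size $\eta$ of the parabolic box depend only on the geometric quantities $\rho_0$ and the initial data and not on how close $t$ is to $T$. This is precisely what the estimates of Section \ref{sec:derivative} and \ref{sec:holder} were designed to give, since all the moduli there are controlled by $\rho_0$ and the non-degenerate speed of the interface, which persist up to time $T<T_c$.
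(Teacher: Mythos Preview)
Your overall route is the same as the paper's: invoke the uniform $\mathcal{C}^{2+\alpha}_s$ estimate of Theorem~\ref{thm:C2a} on $h$, transfer it to $g$ via the derivative relations (\ref{eq1: Dg in h})--(\ref{eq5: Dg in h}), and read off the non-degeneracy from the a~priori bounds of Section~\ref{sec:derivative}. The paper's proof is a one-line appeal to exactly those ingredients; you have simply spelled out the limiting argument and the compactness step, which is fine.

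There is, however, a genuine logical slip in Step~3. From $\sum_{i\neq 1} g_{ii}\geq c'$ together with $g_{ii}\geq 0$ you \emph{cannot} conclude that each tangential eigenvalue is at least $c'/(n-1)$; the configuration $(c',0,\dots,0)$ already shows this fails. So your proposed constant $\lambda=\min(c,c'/(n-1))$ does not follow from the inputs you list. What the estimates of Section~\ref{sec:derivative} actually deliver (Corollary~\ref{bound:g_ii} and relation (\ref{eq5: Dg in h})) is a uniform lower bound on the \emph{tangential Laplacian} $\Delta_\tau g=\sum_{i\neq 1}g_{ii}$, and this is also how the non-degeneracy hypothesis is written in Theorem~\ref{thm:C2a} (as $\sum_{\tau}D^2_{\tau\tau}g\geq\lambda$). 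Since that tangential-Laplacian lower bound is precisely what is needed when you re-run the short-time existence at $t=T$ (the linearized operator (\ref{eq:tildeh_t}) only sees $\sum_{i=2}^n h_{ii}$ through its coefficients), you should state and prove that version in Step~3 and drop the claim about individual directions.
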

\begin{proof}
By the theorem \ref{thm:C2a} about the Hölder estimate of \(h\) with the relations (\ref{eq1: Dg in h})-(\ref{eq5: Dg in h}) between the first-order and second-order derivatives of \(g\) and \(h\), the conclusion is immediate.
\end{proof}

Finally, we prove our main theorem \ref{thm:main}.
\begin{proof} { \bf{Proof of Theorem \ref{thm:main} } }
By the short-time existence theorem \ref{thm:short}, there exists a solution \(g\), smooth up to the interface in \(0<t<T\), for a maximal time \(T>0\). If \(T<T_c\), then \(g(\cdot, T)\) belongs to the class \(\mathcal{C}^{2+\beta}_s\) up to the interface \(x_{n+1}=0\) for some \(0<\beta<1\) and it satisfies the degenaracy conditions by the lemma \ref{lem:holder of g}. Then the linearization (\ref{eq:g_m}) of the evolution equation (\ref{eq:g_t}) of \(g\) satisfies the condition of Theorem 7.1 in \cite {daskalopoulos-hamilton99} with minor changes in higer dimension \(n \geq 3\). Applying the Inverse Function Theorem, we get the short-time existence for \(g(x,t)\) with the initial data \(g(x,T)\), for all \(T\leq t <T+T'\) for some \(T'>0\) and it is \(\mathcal{C}^{\infty}\) up to the interface. This is contradiction to the condition that \(T<T_c\) is the maximal time. Hence, we must have \(T=T_c\), the critical time of the flow.
\end{proof}

\section*{\bf{Acknowledgements}}
Hyo Seok Jang was supported by the Lecture \& Research Scholarship funded by Seoul National University, and by the Brain Korea 21 Plus Scholarship funded by the Korea Goverment (MSIP). Ki-Ahm Lee was supported by the National Research Foundation of Korea (NRF) grant funded by the Korea goverment (MSIP) (NO. 2017R1A2A2A05001376).


\begin{thebibliography}{20}

\bibitem{andrews99}
Ben Andrews,
 \emph{Gauss curvature flow: the fate of the rolling stones},
Invent. math. 138 (1999) 151–161. DOI 10.1007/s002229900004

\bibitem{daskalopoulos-hamilton98}
P. Daskalopoulos, R. Hamilton,
 \emph{Regularity of the free boundary for the porous medium equation},
J. Amer. Math. Soc. Vol. 11, No. 4 (1998) 899–965. http://www.jstor.org/stable/2646144

\bibitem{daskalopoulos-hamilton99}
P. Daskalopoulos, R. Hamilton,
 \emph{The free boundary in the Gauss Curvature Flow with flat sides},
J. reine angew. Math. 510 (1999) 187–227. DOI 10.1515/crll.1999.046

\bibitem{daskalopoulos-hamilton-lee01}
P. Daskalopoulos, R. Hamilton, Ki-Ahm Lee,
 \emph{All time \(C^{\infty}\)-regularity of the interface in degenerate diffusion, a geometric approach},
Duke Math. J., Vol. 108, No. 2 (2001) 295–327. https://projecteuclid.org/euclid.dmj/1091737158, DOI 10.1215/S0012-7094-01-10824-7, MR1833393

\bibitem{daskalopoulos-lee03}
P. Daskalopoulos, Ki-Ahm Lee,
 \emph{Hölder regularity of solutions of degenerate elliptic and parabolic equations},
Journal of Functional Analysis 201 (2003) 341–379. DOI 10.1016/S0022-1236(02)00045-9

\bibitem{daskalopoulos-lee04}
P. Daskalopoulos, Ki-Ahm Lee,
 \emph{Worn stones with flat sides all time regularity of the interface},
Invent. math. 156 (2004) 445–493. DOI 10.1007/s00222-003-0328-1

\bibitem{ecker04}
Klaus Ecker,
 \emph{Regularity Theory for Mean Curvature Flow},
Birkhäuser, Boston, 2004. ISBN 978-0-8176-8210-1

\bibitem{firey74}
W. Firey,
\emph{Shapes of worn stones},
Mathematika Vol. 21, Part 1 (1974) 1–11. DOI 10.1112/S0025579300005714

\bibitem{hamilton94}
R. Hamilton, 
\emph{Worn stones with flat sides},
Discourses Math. Appl. 3 (1994) 69–78. MR1423370

\bibitem{kim-lee-rhee13}
Lami Kim, Ki-Ahm Lee, Eunjai Rhee,
 \emph{\(\alpha\)-Gauss Curvature flows with flat sides},
J. Differential Equations 254 (2013) 1172–1192. DOI 10.1016/j.jde.2012.10.012

\bibitem{lee-rhee12a}
Ki-Ahm Lee, Eunjai Rhee,
\emph{Rolling stonces with nonconvex sides I: regularity theory},
J. Korean Math. Soc. 49, No. 2 (2012) 265–291. DOI 10.4134/JKMS.2012.49.2.265

\bibitem{lee-rhee12b}
Ki-Ahm Lee, Eunjai Rhee,
\emph{Rolling stonces with nonconvex sides II: regularity theory},
J. Korean Math. Soc. 49, No. 3 (2012) 585–604. DOI 10.4134/JKMS.2012.49.3.585

\bibitem{wang92a}
Lihe Wang,
 \emph{On the regularity theory of fully nonlinear parabolic equations: I},
Commun. Pure Appl. Math. 45 (1992) 27–76. DOI 10.1002/cpa.3160450103

\bibitem{wang92b}
Lihe Wang,
 \emph{On the regularity theory of fully nonlinear parabolic equations: II},
Commun. Pure Appl. Math. 45 (1992) 141–178. DOI 10.1002/cpa.3160450202

\end{thebibliography}
\end{document}